\newcommand{\set}[1]{\left\{#1\right\}}
\definecolor{red}{rgb}{1,0,0}
\definecolor{blue}{rgb}{0,0,1}
\newcommand{\ms}{\medskip}
\newcommand{\bpf}{\begin{proof}}
\newcommand{\epf}{\end{proof}\ms}
\newtheorem{theorem}{Theorem}
\newtheorem{corollary}[theorem]{Corollary}
\newtheorem{lemma}[theorem]{Lemma}
\newtheorem{proposition}[theorem]{Proposition}
\newtheorem{observation}[theorem]{Observation}
\theoremstyle{definition}
\newtheorem{definition}{Definition}
\newtheorem{question}{Question}
\begin{document}

\title{Power domination polynomials of graphs}

\author{Boris Brimkov\thanks{Department of Computational and Applied Mathematics, Rice University, Houston, TX, 77005, USA (boris.brimkov@rice.edu, rsp7@rice.edu, vs30@rice.edu, alexander.b.teich@rice.edu)} , 
Rutvik Patel$^*$,
Varun Suriyanarayana$^*$,
Alexander Teich$^*$}
\date{}

\maketitle

\begin{abstract}

A power dominating set of a graph is a set of vertices that observes every vertex in the graph by combining classical domination with an iterative propagation process arising from electrical circuit theory. In this paper, we study the power domination polynomial of a graph $G$ of order $n$, defined as $\mathcal{P}(G;x)=\sum_{i=1}^n p(G;i) x^i$, where $p(G;i)$ is the number of power dominating sets of $G$ of size $i$. We relate the power domination polynomial to other graph polynomials, present structural and extremal results about its roots and coefficients, and identify some graph parameters it contains. We also derive decomposition formulas for the power domination polynomial, compute it explicitly for several families of graphs, and explore graphs which can be uniquely identified by their power domination polynomials.

\smallskip

\noindent {\bf Keywords:} Power domination polynomial, power dominating set, domination, zero forcing
\end{abstract}

\section{Introduction}
Let $G=(V,E)$ be a graph, let $S\subset V$ be a set of initially colored vertices, and consider the following color change rules:
\begin{enumerate}
\item[1)] Every neighbor of an initially colored vertex becomes colored.
\item[2)] Whenever there is a colored vertex with exactly one uncolored neighbor, that neighbor becomes colored.
\end{enumerate}
$S$ is a {\em power dominating set} of $G$ if all vertices in $G$ become colored after applying rule 1) once, and rule 2) as many times as possible (i.e. until no more vertices can change color). The application of rule 1) is called the \emph{domination step}, and each application of rule 2) is called a \emph{forcing step}. The {\em power domination number} of $G$, denoted $\gamma_P(G)$, is the cardinality of a minimum power dominating set. $S$ is a {\em zero forcing set} of $G$ if all vertices in $G$ become colored after applying rule 2) as many times as possible (and not applying rule 1) at all). The {\em zero forcing number} of $G$, denoted $Z(G)$, is the cardinality of a minimum zero forcing set. $S$ is a {\em dominating set} of $G$ if all vertices in $G$ become colored after applying rule 1) once; the cardinality of a minimum dominating set is called the \emph{domination number}, denoted $\gamma(G)$. Thus, the power domination process can be regarded as a combination of classical graph domination and zero forcing.

Power domination arises from a graph theoretic model of the Phase Measurement Unit (PMU) placement problem from electrical engineering: in order to monitor their networks, electrical power companies place PMUs at select locations in the power network; the physical laws by which PMUs observe the network give rise to the color change rules described above (cf. \cite{BH05,powerdom3}). This PMU placement problem has been explored extensively in the electrical engineering literature; see \cite{EEprobabilistic,Baldwin93,Brunei93,EEinformationTheoretic,EEtaxonomy,Mili91,EEtabuSearch,EEmultiStage}, and the bibliographies therein for various placement strategies and computational results. The process of zero forcing was introduced independently in combinatorial matrix theory \cite{AIM-Workshop} and in quantum control theory \cite{quantum1}. See, e.g., \cite{benson,brimkov_caleb,connected_zf,caro,dorbec2,dorfling,kneis,row,xu} for various structural and computational results about power domination, zero forcing, and related variants.

In this paper, we study the counting problem associated with power domination, i.e., characterizing and counting the distinct power dominating sets of a graph. The set of minimum power dominating sets of a graph has been used previously in the context of \emph{power propagation time} \cite{aazami,dorbec,ferrero,liao}, where the objective is to find the smallest number of timesteps it takes for the graph to be colored by a minimum power dominating set. Sets of vertices related to complements of power dominating sets (of arbitrary size) have also been used in integer programming approaches for computing the power domination number (cf. \cite{bozeman}). The PMU placement literature also considers power dominating sets with various additional properties, such as redundancy, controlled islanding, and connectedness, and optimizes over them in addition to the cardinality of the set (see, e.g., \cite{akhlaghi,connected_pd,mahari,xia}). In order to study the collection of power dominating sets of a graph in a more general framework, we introduce the \emph{power domination polynomial}, which counts the number of distinct power dominating sets of a given size. 
\begin{definition}
Let $G$ be a graph on $n$ vertices and $p(G;i)$ be the number of power dominating sets of $G$ with cardinality $i$. The \emph{power domination polynomial} of $G$ is defined as
\begin{equation*}
\mathcal{P}(G;x)=\sum_{i=1}^n p(G;i) x^i.
\end{equation*}
\end{definition}

\noindent We study the basic algebraic and graph theoretic properties of the power domination polynomial, present structural and extremal results about its roots and coefficients, compute it explicitly for several families of graphs, and identify some graphs which can be recognized by their power domination polynomials. We also relate the power domination polynomial to the \emph{zero forcing polynomial} and the \emph{domination polynomial} of a graph $G$, which respectively count the number of zero forcing sets and dominating sets of $G$. More precisely, the zero forcing polynomial of $G$ is defined as $\mathcal{Z}(G;x)=\sum_{i=1}^n z(G;i) x^i$ and the domination polynomial of $G$ is defined as $\mathcal{D}(G;x)=\sum_{i=1}^n d(G;i) x^i$, where $z(G;i)$ is the number of zero forcing sets of $G$ of size $i$, and $d(G;i)$ is the number of dominating sets of $G$ of size $i$.

Work on the domination polynomial and zero forcing polynomial includes derivations of recurrence relations \cite{dom_poly3}, analysis of the roots  \cite{dom_poly2}, and characterizations for specific graphs \cite{dom_poly4,dom_poly1,zfp,dom_poly5}. Similar results have been obtained for the \emph{connected domination polynomial} \cite{connected_dom_poly}, \emph{independence polynomial} \cite{indep_poly}, \emph{clique polynomial} \cite{clique_poly}, \emph{vertex cover polynomial} \cite{vertex_cover_poly}, and \emph{edge cover polynomial} \cite{edge_cover_poly}, which are defined as generating functions of their eponymous sets. In general, graph polynomials contain important information about the structure and properties of graphs that can be extracted by algebraic methods. In particular, the values of graph polynomials at specific points, as well as their coefficients, roots, and derivatives, often have meaningful interpretations. Such information and other unexpected connections between graph theory and algebra are sometimes discovered long after a graph polynomial is originally introduced (see, e.g., \cite{brylawski,stanley}). For more definitions, results, and applications of graph polynomials, see the survey of Ellis-Monaghan and Merino \cite{tuttemain2} and the bibliography therein.

This paper is organized as follows. In the next section, we recall some graph theoretic notions and notation. In Section \ref{section_struct}, we present a variety of structural and extremal results about the coefficients of the power domination polynomial, relate the power domination polynomial to other graph polynomials, and give several decomposition formulas. In Section \ref{section_characterizations}, we give closed-form expressions for the power domination polynomials of several families of graphs, and explore graphs which can be uniquely identified by their power domination polynomials. In Section \ref{section_roots}, we study the roots of the power domination polynomial. We conclude with some final remarks and open questions in Section \ref{section_conclusion}.

\section{Preliminaries}

A graph $G=(V,E)$ consists of a vertex set $V=V(G)$ and an edge set $E=E(G)$ of two-element subsets of $V$. The \emph{order} of $G$ is denoted by $n(G)=|V|$. We will assume that the order of $G$ is nonzero, and when there is no scope for confusion, dependence on $G$ will be omitted. Two vertices $v,w\in V$ are \emph{adjacent}, or \emph{neighbors}, if $\{v,w\}\in E$; we will sometimes write $vw$ to denote an edge $\{v,w\}$. The \emph{neighborhood} of $v\in V$ is the set of all vertices which are adjacent to $v$, denoted $N(v)$; the \emph{degree} of $v\in V$ is defined as $d(v)=|N(v)|$. The \emph{closed neighborhood} of $v$ is the set $N[v]=N(v)\cup \{v\}$. Given $S \subset V$, $N[S]=\bigcup_{v\in S}N[v]$, and $N(S)=\left(\bigcup_{v\in S}N(v)\right)\backslash S$.

Given $S \subset V$, the \emph{induced subgraph} $G[S]$ is the subgraph of $G$ whose vertex set is $S$ and whose edge set consists of all edges of $G$ which have both endpoints in $S$. An isomorphism between graphs $G_1$ and $G_2$ will be denoted by $G_1\simeq G_2$. An \emph{isolated vertex}, or \emph{isolate}, is a vertex of degree 0. A \emph{dominating vertex} is a vertex which is adjacent to all other vertices. A \emph{cut vertex} is a vertex which, when removed, increases the number of connected components in $G$. A \emph{block} of $G$ is a maximal (with respect to inclusion) subgraph which does not contain cut vertices. The path, cycle, complete graph, and empty graph on $n$ vertices will respectively be denoted $P_n$, $C_n$, $K_n$, $\overline{K_n}$. An \emph{endpoint} of $P_n$ is a degree 1 vertex if $n>1$, and a degree 0 vertex if $n=1$.

\sloppypar Given two graphs $G_1$ and $G_2$, the \emph{disjoint union} $G_1\dot\cup G_2$ is the graph with vertex set $V(G_1)\dot\cup V(G_2)$ and edge set $E(G_1)\dot\cup E(G_2)$. The \emph{join} of $G_1$ and $G_2$, denoted $G_1\lor G_2$, is the graph obtained from $G_1\dot\cup G_2$ by adding an edge between each vertex of $G_1$ and each vertex of $G_2$. The \emph{corona} of two graphs $G_1$ and $G_2$, denoted $G_1 \circ G_2$, is the graph obtained by taking one copy of $G_1$ and $n(G_1)$ copies of $G_2$, and adding an edge between the $i^\text{th}$ vertex of $G_1$ and each vertex of the $i^\text{th}$ copy of $G_2$, $1\leq i\leq n(G_1)$. The \emph{complete bipartite graph} with parts of size $a$ and $b$, denoted $K_{a,b}$, is the graph $\overline{K_a}\lor\overline{K_b}$. The graph $K_{n-1,1}$, $n\geq 3$, will be called a \emph{star} and denoted $S_n$, and the graph $C_{n-1}\lor K_1$, $n\geq 4$, will be called a \emph{wheel} and denoted $W_n$.
For other graph theoretic terminology and definitions, we refer the reader to \cite{bondy}.

Given integers $a$ and $b$ with $0\leq a<b$, we adopt the convention that ${a\choose b}=0$. We adopt also the conventions that $\sum_{i\in \emptyset}s_i=0$, $\prod_{i\in \emptyset} s_i= 1$, and $\bigcup_{i\in \emptyset} S_i= \emptyset$. For any positive integer $n$, $[n]$ denotes the set $\{1,\ldots,n\}$. $\mathbb{N}$, $\mathbb{Z}$, $\mathbb{Q}$, $\mathbb{R}$, and $\mathbb{C}$ respectively denote the set of positive integers, integers, rational numbers, real numbers, and complex numbers.

\section{Structural results}
\label{section_struct}

In this section, we give several structural and extremal results about the coefficients of power domination polynomials. We begin with the following basic observation.

\begin{observation}
\label{obs_subset}
Let $G=(V,E)$ be a graph and $R\subset V$. If $S$ is a power dominating set of $G$, then $S\cup R$ is a power dominating set of $G$. Equivalently, any superset of a power dominating set is power dominating, and any subset of a non-power dominating set is not power dominating.
\end{observation}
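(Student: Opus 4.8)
The plan is to argue directly from the color change rules that adding vertices to a power dominating set can only help the propagation process, never hinder it. Let $S$ be a power dominating set of $G$, and let $S' = S \cup R$ for an arbitrary $R \subset V$. I want to show that the set of colored vertices obtained from $S'$ eventually contains all of $V$. The key conceptual point is a \emph{monotonicity} property: if we run the power domination process from two initial sets $S \subseteq S'$, then at every stage the colored set arising from $S'$ contains the colored set arising from $S$.

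First I would set up notation for the process. Let $C_0(S) = N[S]$ denote the set of vertices colored after the single domination step (rule 1) applied to $S$, and for $t \geq 0$ let $C_{t+1}(S)$ be obtained from $C_t(S)$ by one round of forcing (rule 2). The final colored set is $C_\infty(S) = \bigcup_{t \geq 0} C_t(S)$, and $S$ is power dominating precisely when $C_\infty(S) = V$. The base case of the monotonicity claim is immediate: since $S \subseteq S'$, we have $N[S] \subseteq N[S']$, so $C_0(S) \subseteq C_0(S')$.

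The inductive step is the heart of the argument. Assuming $C_t(S) \subseteq C_t(S')$, I would show $C_{t+1}(S) \subseteq C_{t+1}(S')$. A vertex $w$ is added to $C_{t+1}(S)$ either because it was already in $C_t(S)$ (hence in $C_t(S') \subseteq C_{t+1}(S')$ by induction), or because some colored vertex $u \in C_t(S)$ has $w$ as its unique uncolored neighbor in $C_t(S)$. In the latter case, $u \in C_t(S')$ by the inductive hypothesis; moreover, since $C_t(S) \subseteq C_t(S')$, every neighbor of $u$ colored in $C_t(S)$ is also colored in $C_t(S')$, so $u$ has at most one uncolored neighbor with respect to $C_t(S')$. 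If $u$ has no uncolored neighbor in $C_t(S')$, then in particular $w$ is already colored; otherwise $w$ is that unique uncolored neighbor and forcing colors it. Either way $w \in C_{t+1}(S')$, completing the induction. Taking the union over all $t$ gives $C_\infty(S) \subseteq C_\infty(S')$, so $C_\infty(S) = V$ forces $C_\infty(S') = V$, establishing that $S' = S \cup R$ is power dominating.

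The main obstacle is the subtlety in the inductive step concerning rule 2: the rule requires a colored vertex with \emph{exactly} one uncolored neighbor, and a larger colored set could conceivably cause a vertex to have \emph{zero} uncolored neighbors rather than one, so that a force valid under $S$ is not literally ``active'' under $S'$. The resolution, as indicated above, is that this only happens when the target vertex is \emph{already} colored in the larger process, so no harm is done; I would make sure the case analysis explicitly covers this so the set-containment is airtight. The equivalent contrapositive formulation (any subset of a non-power-dominating set is non-power-dominating) then follows immediately by taking complements of the statement just proved.
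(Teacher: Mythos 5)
Your proof is correct. Note that the paper offers no proof at all for this statement---it is stated as an Observation and treated as self-evident---so your round-by-round monotonicity induction simply supplies, rigorously, the reasoning the paper leaves implicit. You also correctly isolate and resolve the one genuine subtlety (a force valid for the colored set of $S$ may be vacuous for the larger colored set of $S'$ because the target $w$ is already colored there, which is harmless since $w \in C_{t+1}(S')$ anyway); the only nitpick is that the final reformulation follows by contraposition rather than by ``taking complements.''
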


\begin{proposition}
\label{prop_all_sets}
Let $G=(V,E)$ be a graph. If $p(G;i)=\binom{n}{i}$ for some $i\in[n]$, then $p(G;j)=\binom{n}{j}$ for $j\in\{i,\ldots,n\}$. 
\end{proposition}

\proof
Note that $p(G;j)\leq \binom{n}{j}$ for all $j\in [n]$. Suppose there exists a $j\in\{i,\ldots,n\}$ such that $p(G;j)<\binom{n}{j}$. Then there exists a set $S\subset V$ of size $j$ which is not power dominating. By Observation \ref{obs_subset}, no subset of $S$ is power dominating; in particular, the subsets of $S$ of size $i$ are not power dominating, which contradicts the assumption that every subset of $V$ of size $i$ is power dominating.
\qed

\vspace{9pt}

\noindent A \emph{matching} of $G=(V,E)$ is a set $M\subset E$ such that no two edges in $M$ have a common endpoint. A matching $M$ \emph{saturates} a vertex $v$, if $v$ is an endpoint of some edge in $M$.

\begin{theorem}[Hall's Theorem \cite{hall}] 
\label{thm_hall}
Let $G$ be a bipartite graph with parts $X$ and $Y$. $G$ has a matching that saturates every vertex in $X$ if and only if for all $S\subset X$, $|S|\leq |N(S)|$.
\end{theorem}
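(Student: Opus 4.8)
The statement is the classical Hall's marriage theorem, so the plan is to give its standard proof. The forward (necessity) direction is immediate: if $M$ is a matching saturating every vertex of $X$, then for any $S\subset X$ the map sending each $x\in S$ to its $M$-partner is an injection from $S$ into $N(S)$, whence $|S|\le|N(S)|$. I would dispatch this in one sentence.

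For the reverse (sufficiency) direction I would assume Hall's condition and induct on $|X|$. The base case $|X|=1$ is trivial, since the lone vertex has a neighbor by Hall's condition applied to $X$ itself. For the inductive step I would split on how tightly the condition is met. In the first case, suppose $|N(S)|\ge|S|+1$ for every nonempty proper $S\subsetneq X$; then I pick any $x\in X$ and any neighbor $y$, match them, and delete both to form $G'$ with part $X'=X\setminus\{x\}$. Deleting $y$ removes at most one vertex from any neighborhood, so for $S\subset X'$ I get $|N_{G'}(S)|\ge|N_G(S)|-1\ge|S|$; Hall's condition thus survives in $G'$, induction gives a matching saturating $X'$, and adjoining the edge $xy$ saturates $X$. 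In the second case, suppose some nonempty proper $S\subsetneq X$ is tight, i.e.\ $|N(S)|=|S|$. I restrict to the subgraph $G_1$ on $S\cup N(S)$, where Hall's condition plainly holds, and use induction (valid since $|S|<|X|$) to get a matching $M_1$ saturating $S$. I then restrict to the subgraph $G_2$ on $(X\setminus S)\cup(Y\setminus N(S))$ and use induction again to get a matching $M_2$ saturating $X\setminus S$; since $M_1$ and $M_2$ occupy disjoint vertex sets, $M_1\cup M_2$ saturates $X$.

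The main obstacle is verifying that $G_2$ satisfies Hall's condition, and this is exactly where the tightness $|N(S)|=|S|$ is essential. The key computation is that for any $T\subset X\setminus S$ the neighborhood splits cleanly as $N_G(S\cup T)=N(S)\,\dot\cup\,N_{G_2}(T)$, so that $|N_{G_2}(T)|=|N_G(S\cup T)|-|N(S)|\ge|S\cup T|-|S|=|T|$, using Hall's condition on $S\cup T$ and the tightness of $S$. Once this inheritance is established the induction closes. An alternative I would keep in reserve is the augmenting-path argument (equivalently, deducing the result from K\"onig's theorem or max-flow--min-cut): if a maximum matching left some $x\in X$ unsaturated, the vertices reachable from $x$ along alternating paths would exhibit a set violating Hall's condition. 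I would favor the induction, since it is self-contained and invokes no heavier machinery.
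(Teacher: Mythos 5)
Your proposal is correct, but there is nothing in the paper to compare it against: the paper states Hall's Theorem purely as a cited classical result (attributed to \cite{hall}) and gives no proof at all, using it only as a black box in the proof of Proposition \ref{prop_unimodal}. Your argument is the standard Halmos--Vaughan induction: the necessity direction via the injection $x\mapsto$ ($M$-partner of $x$), and sufficiency by induction on $|X|$, splitting into the case where every nonempty proper subset has strict surplus (match an arbitrary edge and delete its endpoints) and the case of a tight set $S$ with $|N(S)|=|S|$ (split into $G_1$ on $S\cup N(S)$ and $G_2$ on $(X\setminus S)\cup(Y\setminus N(S))$). The one step that genuinely needs care --- that $G_2$ inherits Hall's condition --- you handle correctly via $|N_{G_2}(T)|=|N_G(S\cup T)|-|N(S)|\ge |S\cup T|-|S|=|T|$, which is exactly where tightness is used. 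So the proof is complete and correct; it is simply supplying a proof the authors deliberately omitted.
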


\begin{proposition}
\label{prop_unimodal}
Let $G=(V,E)$ be a graph. Then, $p(G;i)\leq p(G;i+1)$ for $1\leq i < \frac{n}{2}$.
\end{proposition}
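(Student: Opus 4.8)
The plan is to apply Hall's Theorem (Theorem \ref{thm_hall}) to an inclusion bipartite graph between two consecutive levels of the Boolean lattice, restricted to power dominating sets. Let $X$ denote the collection of power dominating sets of $G$ of size $i$ and let $Y$ denote the collection of power dominating sets of size $i+1$, so that $|X|=p(G;i)$ and $|Y|=p(G;i+1)$. I would form a bipartite graph $H$ with parts $X$ and $Y$ in which $A\in X$ is adjacent to $B\in Y$ exactly when $A\subset B$. The goal is to produce a matching of $H$ saturating $X$; such a matching is an injection from $X$ into $Y$ and immediately yields $p(G;i)\le p(G;i+1)$.

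The crucial structural input is Observation \ref{obs_subset}: since every superset of a power dominating set is again power dominating, every $(i+1)$-element superset of a set in $X$ already lies in $Y$. Consequently, for any subcollection $\mathcal{S}\subset X$, the neighborhood $N_H(\mathcal{S})$ consists of \emph{all} $(i+1)$-subsets of $V$ that contain some member of $\mathcal{S}$, with nothing lost by the restriction to power dominating sets. This is what makes a clean double count possible.

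To verify Hall's condition, I would fix $\mathcal{S}\subset X$ and count the edges of $H$ incident to $\mathcal{S}$ in two ways. On one hand, each $A\in\mathcal{S}$ has size $i$ and is contained in exactly $n-i$ sets of size $i+1$, so $\mathcal{S}$ is incident to exactly $|\mathcal{S}|(n-i)$ edges. On the other hand, each $B\in N_H(\mathcal{S})$ has size $i+1$ and contains exactly $i+1$ subsets of size $i$, hence has at most $i+1$ neighbors in $\mathcal{S}$; thus $\mathcal{S}$ is incident to at most $|N_H(\mathcal{S})|(i+1)$ edges. Combining these gives $|\mathcal{S}|(n-i)\le |N_H(\mathcal{S})|(i+1)$. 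Since $i<\frac{n}{2}$ forces $n-i\ge i+1$, we obtain $|N_H(\mathcal{S})|\ge|\mathcal{S}|$, which is exactly Hall's condition, so Theorem \ref{thm_hall} supplies the desired saturating matching.

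I expect the main point to be the realization that Observation \ref{obs_subset} lets the neighborhood computation be carried out in the full Boolean lattice rather than within the (possibly irregular) family of power dominating sets; once that is in place, the edge-counting inequality and the comparison $n-i\ge i+1$ are routine. The only subtlety to watch is the integrality step, namely that $i<\frac{n}{2}$ implies $n-i\ge i+1$, which uses that $i$ and $n$ are integers.
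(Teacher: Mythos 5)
Your proof is correct and takes essentially the same approach as the paper: both construct the inclusion bipartite graph between the size-$i$ power dominating sets and the size-$(i+1)$ sets, verify Hall's condition by comparing the degree bounds $n-i$ and $i+1$ using $i<\frac{n}{2}$, and extract the desired injection from the resulting matching via Observation \ref{obs_subset}. The only cosmetic difference is that you restrict the part $Y$ to power dominating sets (invoking Observation \ref{obs_subset} up front), whereas the paper takes $Y$ to be all $(i+1)$-subsets and applies the observation to the matched supersets at the end.
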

\proof
Let $X$ be the set of all power dominating sets of $G$ of size $i$ and $Y$ be the set of all subsets of $V$ of size $i+1$. Let $H$ be the bipartite graph with parts $X$ and $Y$, where a vertex $x\in X$ is adjacent to a vertex $y\in Y$ whenever $x\subset y$. Each $x\in X$ is adjacent to $n-i$ sets in $Y$, each of which consists of $x$ and a vertex of $G$ not in $x$. Each $y\in Y$ is adjacent to at most $i+1$ vertices in $X$, since a set of size $i+1$ has $i+1$ subsets of size $i$. Now, suppose for contradiction that for some set $S\subset X$, $|S|>|N(S)|$. Since $d(v)=n-i$ for each $v\in S$ and $|S|>|N(S)|$, there must be some vertex $w\in N(S)$ with $d(w)>n-i$. Thus, $i+1\geq d(w)>n-i$, which contradicts the assumption that $i<\frac{n}{2}$. It follows that for each $S\subset X$, $|S|\leq |N(S)|$, so by Theorem \ref{thm_hall}, $H$ has a matching that saturates all vertices of $X$. Thus, for $1\leq i < \frac{n}{2}$, to each power dominating set of size $i$, there corresponds a distinct superset of size $i+1$, which by Observation \ref{obs_subset} is also a power dominating set.
\qed
\medskip

\begin{theorem}
\label{thm_extremal_coeffs}
Let $G=(V,E)$ be a graph. Then, for $k\in \{0,\ldots,n-1\}$, 
\begin{equation*}
p(G;n-k)=\big\vert\set{S\subset V\colon|S|=k,S\cap N(V\setminus S)\text{ is a zero forcing set of $G[S]$}}\big\vert.
\end{equation*}
\end{theorem}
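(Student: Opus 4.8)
The plan is to pass to complements. Every power dominating set of size $n-k$ is the complement $V\setminus S$ of a unique set $S$ with $|S|=k$, so $p(G;n-k)$ equals the number of size-$k$ sets $S$ for which $T:=V\setminus S$ is power dominating. The goal is therefore to characterize, purely in terms of $S$, when $T$ is a power dominating set, and then to verify that this characterization is exactly that $S\cap N(V\setminus S)$ be a zero forcing set of $G[S]$.

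First I would analyze the domination step applied to $T$. After rule 1), the colored set is $N[T]=T\cup N(T)$. Since $T\subseteq N[T]$, every uncolored vertex lies in $S$; more precisely, the colored vertices of $S$ are exactly $S\cap N(T)=S\cap N(V\setminus S)$, while the uncolored vertices are $S\setminus N(T)$. Thus, immediately after the domination step, the picture inside $G[S]$ is that of a zero forcing process whose initially colored set is $S\cap N(V\setminus S)$.

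The heart of the argument is to show that the remaining forcing steps on $G$ coincide with the zero forcing process on $G[S]$ started from $S\cap N(V\setminus S)$. Two observations drive this. First, every vertex of $T$ has all of its neighbors in $N[T]$, hence colored after the domination step, so no vertex of $T$ can ever perform a force; all forcing activity is confined to $S$. Second, for a colored vertex $v\in S$, every neighbor of $v$ lying outside $S$ belongs to $T$ and is therefore colored, so the uncolored neighbors of $v$ in $G$ are precisely its uncolored neighbors in $G[S]$. By induction on the number of forcing steps, the colored set in $G$ is always $T$ together with the colored set produced by zero forcing on $G[S]$ from $S\cap N(V\setminus S)$. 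Consequently, $T$ is power dominating (i.e.\ all of $V$ becomes colored) if and only if $S\cap N(V\setminus S)$ is a zero forcing set of $G[S]$; summing over all admissible $S$ of size $k$ yields the stated formula.

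The main obstacle I anticipate is making the inductive equivalence of the two processes airtight: one must confirm that at every stage the set of legal forces in $G$ after the domination step matches the set of legal forces in $G[S]$, which amounts to checking simultaneously that $T$-vertices remain inert and that passing to the induced subgraph does not alter the uncolored-neighbor count of any $S$-vertex. Once this correspondence is established, the complementation bijection together with the definition of a zero forcing set delivers the result directly.
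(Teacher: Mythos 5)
Your proof is correct and takes essentially the same approach as the paper's: both pass to complements and establish that $V\setminus S$ is a power dominating set of $G$ if and only if $S\cap N(V\setminus S)$ is a zero forcing set of $G[S]$. The only difference is that you make explicit, via induction, the correspondence between forcing steps in $G$ after the domination step and forcing steps in $G[S]$ (inertness of the vertices of $V\setminus S$ and preservation of uncolored-neighbor counts), which the paper's argument leaves implicit.
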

\proof
We will show that for any $S\subset V$, $S\cap N(V\setminus S)$ is a zero forcing set of $G[S]$ if and only if $V\setminus S$ is a power dominating set of $G$. Suppose first that $S\cap N(V\setminus S)$ is a zero forcing set of $G[S]$. If the vertices in $V\backslash S$ are initially colored, the vertices in $N(V\setminus S)$ will become colored in the domination step, and by assumption, any uncolored vertices in $S$ can be forced by the vertices in $S\cap N(V\setminus S)$. Thus $V\backslash S$ is a power dominating set of $G$. Now suppose $V\setminus S$ is a power dominating set of $G$. Then, after the domination step, the vertices in $N(V\backslash S)$ become colored, and the only uncolored vertices are in $G[S]$. Thus, in order for $V\backslash S$ to be a power dominating set, the vertices in $S\cap N(V\setminus S)$ must be able to force any uncolored vertices in $G[S]$, so $S\cap N(V\setminus S)$ is a zero forcing set of $G[S]$. Thus, for any $k\in\set{0,\ldots,n-1}$ and $S\subset V$ with $|S|=k$, $V\backslash S$ is a power dominating set of size $n-k$ if and only if $S\cap N(V\setminus S)$ is a zero forcing set of $G[S]$.
\qed

\vspace{9pt}
\noindent Note that it can be determined whether a set is a zero forcing set of a graph $G$ in linear time; thus, while computing all the coefficients of the power domination polynomial of $G$ is in general NP-hard (since, e.g., its smallest nonzero coefficient corresponds to $\gamma_P(G)$ which is NP-hard to find \cite{powerdom3}), computing $p(G;n-k)$ is in the complexity class XP, for the parameter $k$. The following are simple consequences of Theorem \ref{thm_extremal_coeffs}. 

\begin{corollary}
\label{cor_degrees}
Let $G$ be a graph with $I$ isolates and $k$ $K_2$-components. Then, 
\begin{enumerate}
\item[$1)$] $p(G;n)=1,$
\item[$2)$] $p(G;n-1)=n-I$
\item[$3)$] $p(G;n-2)=\binom{n}{2}-I(n-I)-\binom{I}{2}-k$.
\item[$4)$] If $G$ is connected and has at least 3 vertices, then $p(G;n-k)=\binom{n}{k}$ for $k\in\{0,1,2\}$.
\end{enumerate}
\end{corollary}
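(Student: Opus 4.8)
The plan is to apply Theorem~\ref{thm_extremal_coeffs} directly for $k=0,1,2$, which reduces each coefficient $p(G;n-k)$ to a count of $k$-element sets $S$ whose boundary $S\cap N(V\setminus S)$ is a zero forcing set of the (tiny) induced subgraph $G[S]$. The only zero forcing facts I need are elementary: $\emptyset$ is vacuously a zero forcing set of the empty graph; $\{v\}$ is the unique zero forcing set of $K_1$; both vertices are required to force $\overline{K_2}$; and every nonempty subset of $V(K_2)$ forces $K_2$. The cases $k=0$ and $k=1$ are then immediate. For $k=0$ the only candidate is $S=\emptyset$, and $\emptyset$ is a zero forcing set of the empty graph $G[\emptyset]$, so $p(G;n)=1$. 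For $k=1$, writing $S=\{v\}$ gives $G[S]\simeq K_1$, whose unique zero forcing set is $\{v\}$; hence $S$ is counted iff $v\in N(V\setminus S)$, i.e.\ iff $v$ has a neighbor, which holds for exactly the $n-I$ non-isolated vertices.

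The substance is in $k=2$. Here I would split the size-two sets $S=\{u,v\}$ according to whether $G[S]\simeq\overline{K_2}$ (when $u,v$ are non-adjacent) or $G[S]\simeq K_2$ (when they are adjacent). For a non-adjacent pair, each of $u,v$ has all its neighbors outside $S$, so it lies in $N(V\setminus S)$ iff it is non-isolated; since forcing $\overline{K_2}$ requires both vertices, such an $S$ is counted iff both $u$ and $v$ are non-isolated. For an adjacent pair, $S\cap N(V\setminus S)$ is a zero forcing set of $K_2$ iff it is nonempty, i.e.\ iff at least one of $u,v$ has a neighbor outside $S$, which fails exactly when $\{u,v\}$ is a $K_2$-component. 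Rather than count the good sets, I would count the complementary sets and subtract from $\binom{n}{2}$: the uncounted sets are the pairs containing at least one isolate, of which there are $\binom{I}{2}+I(n-I)$, together with the $k$ pairs that form $K_2$-components. Because an isolate is non-adjacent to everything while a $K_2$-component contains no isolate, these two families are disjoint, so no double-counting occurs and $p(G;n-2)=\binom{n}{2}-I(n-I)-\binom{I}{2}-k$.

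Part~(4) will then follow at once, since a connected graph on at least three vertices has $I=0$ and $k=0$; substituting into (1)--(3) gives $p(G;n)=1=\binom{n}{0}$, $p(G;n-1)=n=\binom{n}{1}$, and $p(G;n-2)=\binom{n}{2}$. The one genuinely delicate point is the $K_2$-component correction: the hard part will be recognizing that an adjacent pair fails the zero forcing condition \emph{precisely} when neither endpoint sees outside $S$, and then verifying that the isolate-containing pairs and the $K_2$-component pairs are disjoint, which is exactly what makes the clean inclusion--exclusion in (3) valid.
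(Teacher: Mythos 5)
Your proposal is correct and takes the same route as the paper: the paper presents this corollary as a direct consequence of Theorem~\ref{thm_extremal_coeffs}, and its (very terse) proof for part 3) -- ``two vertices can be excluded whenever neither is an isolate and they do not form a $K_2$-component'' -- is exactly the case analysis you carry out in detail. Your write-up simply makes explicit the zero forcing facts for $K_1$, $K_2$, and $\overline{K_2}$ and the disjointness of the isolate-pairs and $K_2$-component pairs, which the paper leaves as ``trivial.''
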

\proof
1) and 2) are trivial. For 3), two vertices can be excluded from a power dominating set whenever neither of them is an isolate and they do not form a $K_2$ component in $G$. 4) follows from 1), 2), and 3). 
\qed
\vspace{9pt}

\noindent Corollary \ref{cor_degrees} shows that the power domination polynomial counts the number of isolates of a graph and the number of $K_2$-components; however we will show in Proposition \ref{prop_connected} that in general, it cannot count the number of components.

\vspace{9pt}

\noindent A \emph{fort} of a graph $G=(V,E)$, as defined in \cite{caleb_thesis}, is a non-empty set $F\subset V$ such that no vertex outside $F$ is adjacent to exactly one vertex in $F$. The following facts are well-known in the literature. 

\begin{proposition}[\cite{bozeman,caleb_thesis,logan_thesis}]
\label{prop_forts_known}
Let $G=(V,E)$ be a graph and let $S\subset V$. 
\begin{enumerate}
\item $S$ is a zero forcing set of $G$ if and only if $S$ intersects every fort $F$ of $G$.  
\item $S$ is a power dominating set of $G$ if and only if $S$ intersects $N[F]$ for every fort $F$ of $G$.  
%\item $S$ is a power dominating set of $G$ if and only if $N[S]$ intersects every fort $F$ of $G$.  
\end{enumerate}

\end{proposition}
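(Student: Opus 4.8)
The plan is to prove both parts by contraposition, working directly with the color change rules, and to deduce the power domination statement (part 2) from the zero forcing statement (part 1) via the observation that a set $S$ is power dominating precisely when $N[S]$ is a zero forcing set.

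For part 1, I would first handle the forward direction by contraposition: suppose $S$ is disjoint from some fort $F$, and argue that no vertex of $F$ ever becomes colored under rule 2). This follows by induction on the forcing steps, maintaining the invariant that all of $F$ is uncolored. If a colored vertex $u$ were to force a vertex $w\in F$, then $u\notin F$ by the invariant, and $w$ would be the unique uncolored neighbor of $u$; but the fort condition forces $u$ to have at least two neighbors in $F$, all of them uncolored, contradicting the uniqueness of $w$. Since $F\neq\emptyset$, the set $S$ fails to color $G$, so it is not a zero forcing set. For the reverse direction, again by contraposition, suppose $S$ is not a zero forcing set and let $F$ be the nonempty set of vertices that remain uncolored when the forcing process stalls. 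If some colored vertex were adjacent to exactly one vertex of $F$, a further force would be possible, contradicting that the process has stalled; hence $F$ is a fort, and since $S$ lies among the colored vertices, $S\cap F=\emptyset$.

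For part 2, I would first record that after the single application of rule 1) the colored set is exactly $N[S]$, so $S$ is a power dominating set of $G$ if and only if $N[S]$ is a zero forcing set of $G$. Applying part 1 to $N[S]$, this holds if and only if $N[S]$ meets every fort $F$. It then remains to verify the purely set-theoretic equivalence $N[S]\cap F\neq\emptyset \iff S\cap N[F]\neq\emptyset$, which is immediate from the symmetry of the closed-neighborhood relation (namely $v\in N[u]$ if and only if $u\in N[v]$): a vertex witnessing one intersection yields a vertex witnessing the other. Chaining these equivalences gives that $S$ is power dominating if and only if $S\cap N[F]\neq\emptyset$ for every fort $F$.

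The proof is conceptually clean because the definitions align well, so there is no single deep obstacle. The hard part will be the interplay in part 1 between the fort condition, which counts neighbors \emph{in} $F$, and the forcing rule, which counts \emph{uncolored} neighbors. Bridging the two requires the invariant that $F$ stays entirely uncolored throughout the process, which is exactly what lets ``at least two neighbors in $F$'' be upgraded to ``at least two uncolored neighbors'' and thereby block any force into $F$.
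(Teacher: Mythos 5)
Your proof is correct, and in fact it supplies something the paper itself omits: Proposition \ref{prop_forts_known} is stated there as a known result with citations to \cite{bozeman,caleb_thesis,logan_thesis}, with no internal proof, so there is no in-paper argument to compare against. Your argument is the standard one from that literature and is complete. In part 1, the forward direction correctly uses the invariant that a fort $F$ disjoint from $S$ remains entirely uncolored (any would-be forcer $u\notin F$ adjacent to a vertex of $F$ has, by the fort condition, at least two neighbors in $F$, hence at least two uncolored neighbors), and the reverse direction correctly observes that the set of vertices left uncolored when the process stalls is itself a fort disjoint from $S$. In part 2, your opening reduction --- that $S$ is power dominating if and only if $N[S]$ is a zero forcing set, because the colored set after the domination step is exactly $N[S]$ --- is precisely the content of Theorem \ref{thm_dean} (which the paper cites from \cite{dean}); you could have invoked it directly, but your one-line re-derivation is valid. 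The closing set-theoretic equivalence $N[S]\cap F\neq\emptyset \iff S\cap N[F]\neq\emptyset$, via the symmetry $v\in N[u]\iff u\in N[v]$, is also correct, so the chain of equivalences goes through.
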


\noindent In \cite{bozeman,logan_thesis}, it was shown that the following integer program can be used to compute the power domination number of a graph $G=(V,E)$, where $\mathcal{N}(G)=\{N[F]:F \text{ is a fort of }G\}$.   

\begin{eqnarray*}
\min&& \sum_{v\in V}s_v\\
\text{s.t.:} &&\sum_{v\in N}s_v\geq 1  \qquad\forall N\in \mathcal{N}(G)\\
&&s_v\in \{0,1\} \qquad\forall v\in V
\end{eqnarray*}

\noindent We now give a way to bound the number of constraints in this model using the power domination polynomial.

\begin{proposition}
Let $G$ be a graph. Then, $|\mathcal{N}(G)|\leq 2^n-\mathcal{P}(G;1)$.
\end{proposition}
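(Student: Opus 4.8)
The plan is to build an injection from $\mathcal{N}(G)$ into the collection of subsets of $V$ that are \emph{not} power dominating sets of $G$. First I would observe that $\mathcal{P}(G;1)=\sum_{i=1}^n p(G;i)$ is exactly the total number of power dominating sets of $G$, so the number of subsets of $V$ that fail to be power dominating is precisely $2^n-\mathcal{P}(G;1)$. Consequently, the desired inequality $|\mathcal{N}(G)|\leq 2^n-\mathcal{P}(G;1)$ would follow at once from the existence of such an injection.

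The map I would use is complementation: send each $N[F]\in\mathcal{N}(G)$ to its complement $V\setminus N[F]$. Since complementation is a bijection on the power set of $V$, this map is automatically injective, so distinct members of $\mathcal{N}(G)$ receive distinct images. It therefore remains only to verify that every image lands in the target set, i.e., that $V\setminus N[F]$ is not a power dominating set of $G$ for each fort $F$.

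This last verification is where Proposition \ref{prop_forts_known} does the work. By part 2 of that proposition, a set $S$ is a power dominating set of $G$ if and only if $S$ intersects $N[F']$ for every fort $F'$. Taking $S=V\setminus N[F]$ and $F'=F$, we have $(V\setminus N[F])\cap N[F]=\emptyset$, so $S$ misses the closed neighborhood of the particular fort $F$; hence $S$ is not a power dominating set. This confirms that the complement map carries $\mathcal{N}(G)$ into the non-power-dominating subsets, completing the argument.

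I do not anticipate a serious difficulty here: the proof reduces to a single injection once one identifies $\mathcal{P}(G;1)$ as the count of all power dominating sets and recalls the fort characterization of power domination. The only point requiring a little care is the bookkeeping around the extreme case, namely noting that the empty set—which arises as the complement precisely when $N[F]=V$—is itself correctly counted among the non-power-dominating subsets, so that nothing is omitted or double-counted in the bound $2^n-\mathcal{P}(G;1)$.
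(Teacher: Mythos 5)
Your proof is correct and is essentially the paper's argument: both rest on Proposition \ref{prop_forts_known}(2) together with the fact that complementation is a bijection on subsets of $V$. The only cosmetic difference is direction — you inject $\mathcal{N}(G)$ into the non-power-dominating sets, while the paper injects the complements of power dominating sets into the complement of $\mathcal{N}(G)$; these are the same count read from opposite sides.
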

\proof
$\mathcal{P}(G;1)=\sum_{i=1}^np(G;i)$ equals the number of distinct power dominating sets of $G$, and hence also the number of complements of power dominating sets of $G$. By Proposition \ref{prop_forts_known}, a power dominating  set must intersect every element of $\mathcal{N}(G)$. Thus, the complement of a power dominating set cannot be an element of $\mathcal{N}(G)$, and so the number of complements of power dominating sets of $G$ is at most the number of sets of $G$ which are not neighborhoods of forts, i.e., $2^n-|\mathcal{N}(G)|$. Thus, $\mathcal{P}(G;1)\leq 2^n-|\mathcal{N}(G)|$.
\qed

\medskip

\subsection{Relation to other polynomials}

In this section, we characterize the graphs whose power domination polynomials are identical to their zero forcing polynomials or their domination polynomials. We begin with the following basic observation, which follows from the fact that every zero forcing set and every dominating set of a graph is also a power dominating set.

\begin{observation}
Let $G$ be a graph. Then, $z(G;i)\leq p(G;i)$, and $d(G;i)\leq p(G;i)$ for all $1\leq i\leq n$. Also, $\gamma_P(G)\le Z(G)$ and $\gamma_P(G)\le\gamma(G)$.
\end{observation}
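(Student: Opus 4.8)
The plan is to establish the two set containments indicated in the hint preceding the statement: every zero forcing set of $G$ is a power dominating set, and every dominating set of $G$ is a power dominating set. Once these containments are in hand, all four assertions follow mechanically, two by counting and two by minimization. So the work is entirely in verifying the containments from the color change definitions.

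First I would dispatch the dominating set case, which is immediate. If $S$ is a dominating set, then by definition applying rule 1) once colors $N[S]=V$, so every vertex is already colored at the end of the domination step, before any forcing step is even attempted. Hence $S$ meets the definition of a power dominating set no matter what rule 2) does afterward.

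Next I would treat the zero forcing case, which is slightly more delicate. Suppose $S$ is a zero forcing set, so that the closure of $S$ under rule 2) alone is all of $V$. In the power domination process we first apply rule 1), which only adds colored vertices and in particular leaves every vertex of $S$ colored; thus after the domination step the colored set is $N[S]\supseteq S$. The key step is the monotonicity of the forcing closure: if one colored set contains another, then its closure under rule 2) contains the other's closure. Applying this with $S\subseteq N[S]$, the closure of $N[S]$ under rule 2) contains the closure of $S$, which is $V$, so every vertex becomes colored and $S$ is a power dominating set. I expect this monotonicity observation to be the only real content of the argument, although it is standard for zero forcing and could be stated without proof.

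Finally I would deduce the stated inequalities. For each $i$, the zero forcing sets of size $i$ form a subfamily of the power dominating sets of size $i$, which gives $z(G;i)\leq p(G;i)$; the identical argument with dominating sets gives $d(G;i)\leq p(G;i)$. For the parameters, a minimum zero forcing set is in particular a power dominating set, so $\gamma_P(G)\leq Z(G)$, and likewise a minimum dominating set is power dominating, yielding $\gamma_P(G)\leq\gamma(G)$.
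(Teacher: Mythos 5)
Your proposal is correct and follows exactly the paper's reasoning: the paper justifies this observation by the single remark that every zero forcing set and every dominating set is also a power dominating set, which is precisely the pair of containments you verify (your monotonicity argument for the forcing closure is the standard way to fill in the zero forcing case). The counting and minimization consequences you draw are the same mechanical step the paper leaves implicit.
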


\noindent Note that in general, $Z(G)$ and $\gamma(G)$ (as well as $d(G;i)$ and $z(G;i)$) are incomparable. Recall also the following result relating zero forcing sets and power dominating sets.

\begin{theorem}[\cite{dean}]
\label{thm_dean}
$S$ is a power dominating set of $G$ if and only if $N[S]$ is a zero forcing set of $G$. 
\end{theorem}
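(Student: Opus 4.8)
The statement to prove is Theorem~\ref{thm_dean}: $S$ is a power dominating set of $G$ if and only if $N[S]$ is a zero forcing set of $G$.

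The plan is to unwind the definitions of the two processes and argue that applying the domination step to $S$ produces exactly the colored set $N[S]$, after which both processes proceed identically via forcing steps. First I would prove the forward direction. Suppose $S$ is a power dominating set. By definition, when the vertices of $S$ are initially colored, rule~1 colors every neighbor of an initially colored vertex, so after the single domination step the set of colored vertices is precisely $N[S]=S\cup N(S)$. The remainder of the power domination process consists solely of forcing steps (rule~2) applied until no further changes occur, and by assumption this colors all of $V$. Now consider running the zero forcing process starting from the initially colored set $N[S]$: this process applies only rule~2. Since the colored set immediately after the domination step in the power domination process is exactly $N[S]$, and the power domination process thereafter uses only forcing steps, the two processes evolve through the same sequence of colorings. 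Hence the zero forcing process started from $N[S]$ also colors all of $V$, so $N[S]$ is a zero forcing set.

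For the converse, I would argue symmetrically. Suppose $N[S]$ is a zero forcing set. Run the power domination process starting from $S$: the domination step colors exactly $N[S]$, and from that point on only forcing steps are applied. Because $N[S]$ is a zero forcing set, applying rule~2 repeatedly starting from $N[S]$ colors all of $V$; this is exactly the sequence of forcing steps the power domination process carries out after its domination step. Therefore $S$ is a power dominating set.

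The only genuinely substantive point, and the step I would be most careful about, is the observation that the colored set after one application of the domination step to $S$ equals $N[S]$, together with the claim that the two processes agree thereafter. This is where one must check there is no subtlety in the color change rules: rule~1 applied once colors all neighbors of initially colored vertices (giving $N(S)$ in addition to $S$ itself), and rule~2 is the identical forcing rule used in zero forcing, so the forcing dynamics depend only on the current colored set and not on how that set was obtained. Once this is made explicit, both directions follow immediately, so the argument is essentially a clean matter of reconciling the two definitions rather than any intricate combinatorics.
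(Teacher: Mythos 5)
Your proof is correct. Note, however, that the paper does not prove Theorem~\ref{thm_dean} at all: it is quoted as a known result from \cite{dean}, so there is no internal proof to compare against. Your argument --- that the domination step applied to $S$ colors exactly $N[S]$, after which the power domination process is literally the zero forcing process run from $N[S]$ (since rule~2 depends only on the current colored set, not on its history) --- is the standard, essentially definitional justification of this equivalence, and it handles both directions correctly.
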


\begin{theorem}
\label{thm_zf=pd}
Let $G$ be a graph. $\mathcal{Z}(G;x)=\mathcal{P}(G;x)$ if and only if $G\simeq P_{a_1}\dot\cup\cdots \dot\cup P_{a_k}$, where $a_i\leq 2$ for $1\leq i\leq k$.
\end{theorem}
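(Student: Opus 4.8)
The plan is to characterize when every power dominating set is a zero forcing set and vice versa, i.e.\ when $z(G;i)=p(G;i)$ for all $i$. Since we always have $z(G;i)\le p(G;i)$ by the earlier observation, the polynomial identity $\mathcal{Z}(G;x)=\mathcal{P}(G;x)$ holds if and only if every power dominating set is also a zero forcing set. So the task reduces to characterizing the graphs in which ``power dominating'' and ``zero forcing'' coincide as properties of vertex sets.

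For the backward direction, I would verify directly that disjoint unions of copies of $P_1$ and $P_2$ have this property. In such graphs each component has at most two vertices, so once a single vertex of a component is colored (whether by domination or forcing), the whole component is immediately colored; consequently the domination step never gives zero forcing any additional power, and the two notions agree. One can either argue this componentwise using the fact that both polynomials factor over disjoint unions, or appeal to Theorem~\ref{thm_dean}: here $N[S]$ colors nothing beyond what $S$ already forces, because every vertex is within distance $1$ of the endpoint of its component.

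For the forward direction, I would argue the contrapositive: if $G$ is \emph{not} a disjoint union of $P_1$'s and $P_2$'s, then $G$ has a component with at least three vertices, hence a vertex $v$ of degree at least $2$ (or a path of length at least $2$), and I exhibit a single set that is power dominating but not zero forcing. The cleanest construction is to find a vertex $v$ such that $\{v\}$ (or more generally some set $S$) power-dominates its component via the domination step picking up two or more neighbors at once, whereas $S$ alone cannot start any forcing chain and so fails to be zero forcing. Using Proposition~\ref{prop_forts_known}, this can be phrased fort-theoretically: I would find a fort $F$ and a set $S$ that intersects $N[F]$ but misses $F$ itself, which makes $S$ power dominating but not zero forcing, witnessing $p(G;i)>z(G;i)$ for the relevant $i$.

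The \textbf{main obstacle} is making the forward direction uniform across all graphs that contain a component of order at least three, since such components vary widely. I expect the key structural fact to be that any connected graph on at least three vertices contains a vertex whose closed neighborhood strictly exceeds what a single vertex can force in a step, so the domination step strictly enlarges the colored set relative to forcing; pinning down the right witness set $S$ and the right size $i$ for which $p(G;i)>z(G;i)$ is the delicate part. I would handle it by choosing $v$ adjacent to two vertices and letting $S=V\setminus\{u\}$ for a carefully chosen $u$, or equivalently by invoking the fort characterization to produce a fort $F$ with $|F|\ge 2$ whose neighborhood is strictly larger than $F$, guaranteeing a set that separates the two properties.
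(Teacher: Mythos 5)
Your reduction of the polynomial identity to the set-level statement ``every power dominating set is a zero forcing set'' is correct and matches the paper's first step, and your backward direction is fine. The genuine gap is in the forward direction, and it sits exactly where you flagged the ``delicate part.'' Your fort-theoretic criterion is wrong as stated: by Proposition \ref{prop_forts_known}, a set $S$ is power dominating if and only if it intersects $N[F']$ for \emph{every} fort $F'$ of $G$; exhibiting one fort $F$ with $S\cap N[F]\neq\emptyset$ and $S\cap F=\emptyset$ only certifies that $S$ is \emph{not} zero forcing, and says nothing about $S$ being power dominating. Your concrete candidate witnesses also fail in general: a set of the form $V\setminus\{u\}$ is power dominating if and only if it is zero forcing (both hold exactly when $u$ is not isolated), so sets of size $n-1$ can never separate the two properties, and singleton witnesses $\{v\}$ do not exist in graphs with $\gamma_P(G)\geq 2$ (e.g., the graphs in the family $\mathscr{F}$ of Theorem \ref{thm_zhao}, which have $\gamma_P=n/3$). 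So the proposal never actually produces, for an arbitrary component on at least three vertices, a power dominating set that is not zero forcing.

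The missing ingredient is minimality of the fort. If $G'$ is a connected component with $n'\geq 3$ vertices, let $F$ be a fort of $G'$ of minimum cardinality. Then $F\neq V(G')$ (otherwise $V(G')$ would be the unique fort, every singleton would be zero forcing, and $G'$ would be $P_1$ or $P_2$), so $N(F)\neq\emptyset$ by connectivity. Now $S'=V(G')\setminus F$ misses $F$, hence is not zero forcing; but it is power dominating, because any fort disjoint from $S'$ is contained in $F$ and hence equals $F$ by minimality, and $S'$ does intersect $N[F]$. Adding all vertices of the other components yields a power dominating set of $G$ that is not zero forcing, completing the contrapositive. This repaired argument is essentially how the paper proves the stronger coefficient-wise statement, Theorem \ref{thm_pd=zf_coeffs}. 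The paper's own proof of Theorem \ref{thm_zf=pd} is different in flavor: from the set-level equivalence and Theorem \ref{thm_dean} it deduces that $Z$ is zero forcing if and only if $N[Z]$ is, then uses a swapping argument along a shortest path between two vertices of a minimum zero forcing set of a component to show each component has zero forcing number $1$, hence is a path, and finally rules out paths on three or more vertices because their interior singletons are power dominating but not zero forcing. Either route works, but your write-up as it stands does not yet contain a valid version of either.
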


\proof
Let $G$ be a graph such that $\mathcal{Z}(G;x)=\mathcal{P}(G;x)$. Suppose $G$ has a power dominating set $S$ which is not a zero forcing set. Let $S_1,\ldots,S_k$ be all zero forcing sets of $G$ of size $|S|$. Since $\mathcal{P}(G;x)=\mathcal{Z}(G;x)$, there are the same number of zero forcing sets of size $|S|$ as power dominating sets of size $|S|$. However, since $S$ is different from $S_1,\ldots,S_k$ and since every zero forcing set is also a power dominating set, it follows that $p(G;|S|)>z(G;|S|)$, a contradiction. Thus, $S$ is a zero forcing set of $G$ if and only if $S$ is a power dominating set of $G$. From this and from Theorem \ref{thm_dean}, it follows that $S$ is a zero forcing set of $G$ if and only if $N[S]$ is a zero forcing set of $G$. In other words, a dominating set of a zero forcing set of $G$ is also zero forcing.

Now, suppose some component $G'$ of $G$ has zero forcing number greater than one. Let $Z$ be a minimum zero forcing set of $G'$; $G'[Z]$ has no edges, since otherwise a dominating set of $G'[Z]$ would be a smaller zero forcing set. Let $v_1$ and $v_p$ be two vertices in $Z$ such that a shortest path $v_1,v_2,\ldots,v_p$ between $v_1$ and $v_p$ in $G'$ contains no other vertices of $Z$ (clearly such vertices exist, e.g. if they are a closest pair among all pairs of vertices in $Z$). The set $N[Z\backslash \{v_1\}\cup \{v_2\}]$ is also a zero forcing set of $G'$ since it contains $Z$; thus, $Z\backslash\{v_1\}\cup\{v_2\}$ is also a zero forcing set. Similarly, the set $N[Z\backslash \{v_1\}\cup \{v_3\}]$ is a zero forcing set of $G'$ since it contains $Z\backslash \{v_1\}\cup \{v_2\}$; hence, $Z\backslash\{v_1\}\cup\{v_3\}$ is also zero forcing. By the same reasoning, the sets $Z\backslash\{v_1\}\cup\{v_4\},\ldots,Z\backslash\{v_1\}\cup\{v_{p-1}\}$ are all zero forcing. However, $G'[Z\backslash\{v_1\}\cup\{v_{p-1}\}]$ contains the edge $v_pv_{p-1}$, and hence $G'$ has a smaller zero forcing set than $Z$, a contradiction. Thus, $Z(G')=1$ and so $G'$ is a path. If $G'$ is a path of length greater than 1, then there are subsets of $V(G')$ of size 1 which are not zero forcing sets, but all subsets of size 1 are power dominating sets. Since $\mathcal{P}(G;x)=\mathcal{Z}(G;x)$, it follows that every component of $G$ is a path of length at most 1. 

Conversely, if every component of $G$ is a path of length at most 1, then every power dominating set is clearly a zero forcing set (and vice versa), so $\mathcal{P}(G;x)=\mathcal{Z}(G;x)$.
\qed

\begin{theorem}
\label{pd=d}
Let $G=(V,E)$ be a graph. $\mathcal{D}(G;x)=\mathcal{P}(G;x)$ if and only if for each non-isolate $u\in V$, there exists a $v\in N(u)$ such that $N[v]\subset N[u]$.
\end{theorem}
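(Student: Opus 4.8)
The plan is to first reduce the polynomial identity to a set-theoretic statement and then prove each direction of that statement. Since every dominating set is a power dominating set, we have $d(G;i)\le p(G;i)$ for all $i$, so $\mathcal{D}(G;x)=\mathcal{P}(G;x)$ holds if and only if $d(G;i)=p(G;i)$ for every $i$, which in turn holds if and only if every power dominating set of $G$ is in fact a dominating set. Thus it suffices to prove that every power dominating set is dominating precisely when the stated neighborhood condition holds. I also record the convenient reformulation that for $v\neq u$ the inclusion $N[v]\subseteq N[u]$ already forces $v\in N(u)$ (as $v\in N[v]\subseteq N[u]$), so the hypothesis is equivalent to: every non-isolate $u$ admits some $v\neq u$ with $N[v]\subseteq N[u]$.

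For the contrapositive of one direction, suppose the condition fails at a non-isolate $u$, so $N[v]\not\subseteq N[u]$ for every $v\neq u$. First I note that $u$ is not a dominating vertex (otherwise $N[u]=V\supseteq N[v]$ for all $v$), so $S:=V\setminus N[u]$ is nonempty. I claim $S$ is power dominating but not dominating. It is not dominating because $N[u]\cap S=\emptyset$ leaves $u$ unobserved after the domination step. To see that it is power dominating, I will check that $u$ is the \emph{only} vertex left uncolored after domination: vertices outside $N[u]$ lie in $S$, while each $z\in N(u)$ has, by the failure of the condition, a neighbor outside $N[u]$ (hence in $S$), so $z$ is dominated. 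Thus $N[S]=V\setminus\{u\}$, and since $u$ is a non-isolate it is then the unique uncolored neighbor of any of its neighbors and gets forced. Hence $S$ is a power dominating set that is not dominating, so $p(G;|S|)>d(G;|S|)$ and the polynomials differ.

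For the other direction, assume the condition holds and suppose for contradiction that some power dominating set $S$ is not dominating. Let $U$ be its nonempty set of unobserved vertices; every vertex of $U$ is a non-isolate, since any undominated isolate could never be colored and would contradict power domination, and for each $u\in U$ we have $N[u]\cap S=\emptyset$. Choose $u\in U$ with $N[u]$ minimal under inclusion, and let $v\neq u$ satisfy $N[v]\subseteq N[u]$. Since $N[v]\subseteq N[u]$ is then disjoint from $S$, the vertex $v$ is also unobserved, i.e. $v\in U$, and minimality forces $N[v]=N[u]$, so $u$ and $v$ are true twins. The key observation is that a true-twin pair $\{u,v\}$ is a fort: any vertex outside $\{u,v\}$ is adjacent to $u$ if and only if it is adjacent to $v$, so none is adjacent to exactly one of them. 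But $N[\{u,v\}]=N[u]$ is disjoint from $S$, so $S$ fails to intersect $N[F]$ for this fort $F$, contradicting part $2$ of Proposition \ref{prop_forts_known}. Therefore no such $S$ exists, and every power dominating set is dominating.

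I expect the last direction to be the main obstacle. The natural attempt---tracking the first unobserved vertex to be forced and using the condition to block the force---fails, because $N[v]\subseteq N[u]$ controls the neighborhood of the ``smaller'' vertex $v$ rather than that of the forcing vertex, and so it does not directly prevent a force. The resolution is to pass to a closed-neighborhood-minimal unobserved vertex, where the condition is compelled to produce a true twin rather than a strictly smaller vertex; the resulting twin pair is a fort lying entirely in the unobserved region, which contradicts power domination cleanly. (Alternatively, one can avoid forts and argue directly that two true twins which are both uncolored can never force each other, so the pair remains uncolored throughout the process.)
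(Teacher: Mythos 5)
Your proof is correct, and it splits the work the same way the paper does, but the harder direction is genuinely different. The reduction to ``every power dominating set is dominating'' and your forward (contrapositive) direction coincide with the paper's: both take $S=V\setminus N[u]$ at a non-isolate $u$ where the condition fails, observe that every vertex of $N(u)$ is dominated by a vertex outside $N[u]$, and let any neighbor of $u$ force $u$. The converse is where you diverge. The paper stays inside the propagation process: it lets $u$ be the \emph{last} vertex forced by a power dominating set $S$ that is not dominating, takes $v\in N(u)$ with $N[v]\subset N[u]$, and runs a three-way case analysis on how $v$ became colored (initially, by domination, or by forcing), each case contradicting that $u$ is colored last. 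You instead pick an unobserved vertex $u$ whose closed neighborhood is inclusion-minimal among unobserved vertices, show the hypothesis then yields a true twin $v$ of $u$ that is also unobserved, and note that a true-twin pair is a fort whose closed neighborhood misses $S$, contradicting Proposition \ref{prop_forts_known}(2). Your route buys an order-free argument: it avoids reasoning about the temporal order of forces (which, when several forces happen in the same step, requires a little care in the paper's ``last forced vertex'' step), at the price of invoking the fort machinery; the paper's route is self-contained, using nothing beyond the color change rules, and your parenthetical twin-based alternative would make your argument equally self-contained. Your closing remark that tracking the \emph{first} unobserved-then-forced vertex fails is fair, but note the paper's fix is precisely to track the \emph{last} forced vertex instead, which is why its direct approach goes through.
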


\proof
Suppose that $u$ is a non-isolate of $G$ such that for each $v\in N(u)$, there is a $q\in N[v]$ with $q\notin N[u]$. Let $S=V\setminus N[u]$ be a set of initially colored vertices. Then, each vertex in $N(u)$ has a neighbor in $S$, so all vertices in $N(u)$ will be colored in the domination step. Since $u$ is the only uncolored vertex left after the domination step, and since $u$ is not an isolate, $u$ will be colored in the forcing step, so $S$ is a power dominating set of $G$. On the other hand, clearly $S$ is not a dominating set since $u$ has no neighbor in $S$. Since all dominating sets of $G$ (and in particular those of size $|S|$) are also power dominating sets, it follows that $p(G;|S|)>d(G;|S|)$ and so $\mathcal{D}(G;x)\neq\mathcal{P}(G;x)$.

Conversely, suppose that for each non-isolate $u\in V$, there exists a $v\in N(u)$ such that $N[v]\subset N[u]$, but $\mathcal{D}(G;x)\not=\mathcal{P}(G;x)$. Since every dominating set of $G$ is a power dominating set, there must be some power dominating set $S$ that is not a dominating set. Since all isolates must be in every power dominating set, there is some non-isolate $u\in V$ that is the last vertex to be forced by $S$. By our assumption, there is a $v\in N(u)$ such that $N[v]\subset N[u]$. Since $u$ is the last vertex to be colored, $v$ must have already been colored at the timestep when $u$ gets forced. If $v$ was initially colored, then $u$ would have been colored in the domination step, contradicting the fact that $u$ should be forced rather than dominated. If $v$ was colored in the domination step, then since every neighbor of $v$ is also a neighbor of $u$, $u$ would again have been dominated rather than forced, by the same vertex that dominates $v$. If $v$ was colored in some forcing step, then since every neighbor of $v$ is also a neighbor of $u$, $u$ would have had to be colored in order for the forcing to occur, contradicting the assumption that $u$ is the last vertex to be colored. Thus, in all cases, it follows that $S$ cannot exist, so $\mathcal{D}(G;x)=\mathcal{P}(G;x)$.
\qed
\vspace{9pt}

\noindent Note that the condition in Theorem \ref{pd=d} can be verified in polynomial time. Graphs satisfying this condition include, e.g., graphs where each block is a clique and contains at least two non-cut vertices.

\begin{theorem}
\label{thm_pd=zf_coeffs}
For any connected graph $G=(V,E)$, $z(G;i)=p(G;i)$ if and only if $z(G;i)=p(G;i)=\binom{n}{i}$ or $z(G;i)=p(G;i)=0$. 
\end{theorem}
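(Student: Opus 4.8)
The plan is to prove the substantive (forward) direction after a reduction. Since every zero forcing set is a power dominating set, we always have $z(G;i)\le p(G;i)$, and equality holds exactly when every power dominating set of size $i$ is a zero forcing set. The backward direction of the theorem is immediate. For the forward direction, assume $z(G;i)=p(G;i)$: if $p(G;i)=0$ then $z(G;i)=0$, and if $p(G;i)=\binom{n}{i}$ then $z(G;i)=\binom{n}{i}$, so in both cases the conclusion holds. Thus the entire statement reduces to showing that if $G$ is connected and $0<p(G;i)<\binom{n}{i}$, then $z(G;i)<p(G;i)$, i.e. there is a power dominating set of size $i$ that is not a zero forcing set. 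I would argue by contradiction, assuming every power dominating set of size $i$ is a zero forcing set while there exist both a power dominating set $A$ and a non–power dominating set $B$, each of size $i$.

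First I would interpolate between $A$ and $B$: enumerating $A\setminus B$ and $B\setminus A$ and swapping one vertex at a time yields sets $A=C_0,C_1,\dots,C_r=B$ of size $i$ with consecutive sets differing in a single vertex. Since $A$ is power dominating and $B$ is not, there is an index with $C_j$ power dominating (hence, by assumption, a zero forcing set) and $C_{j+1}=(C_j\setminus\{u\})\cup\{w\}$ not power dominating. Writing $Y=C_j\setminus\{u\}$, Proposition \ref{prop_forts_known} applied to the non–power dominating set $Y\cup\{w\}$ supplies a fort $F$ with $(Y\cup\{w\})\cap N[F]=\emptyset$; in particular $Y\cap N[F]=\emptyset$, $w\notin N[F]$, and $F\neq V$. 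Because $C_j=Y\cup\{u\}$ is a zero forcing set it meets $F$, and since $Y\cap F\subseteq Y\cap N[F]=\emptyset$ this forces $u\in F$.

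The main obstacle is now to turn $C_j$ into a power dominating set of size exactly $i$ that misses a fort (hence, by Proposition \ref{prop_forts_known}, is not a zero forcing set): simply replacing $u$ by an outside vertex need not preserve the power domination property, and $u$ may sit deep in the interior of $F$. The device I would use to overcome this is to slide $u$ to the boundary of $F$ along a path. As $G$ is connected and $F\neq V$, fix a shortest path $u=w_0,w_1,\dots,w_t$ with $w_0,\dots,w_{t-1}\in F$ and $w_t\notin F$. I claim each $Y\cup\{w_k\}$ with $0\le k\le t-1$ is power dominating, by induction. The base case $k=0$ is $C_j$. If $Y\cup\{w_k\}$ is power dominating then, having size $i$ (note $w_k\in F$ and $Y\cap F=\emptyset$, so $w_k\notin Y$), it is a zero forcing set by hypothesis; and since $w_k\in N[w_{k+1}]$ we get $N[Y\cup\{w_{k+1}\}]\supseteq Y\cup\{w_k\}$, a superset of a zero forcing set and therefore itself a zero forcing set, so $Y\cup\{w_{k+1}\}$ is power dominating by Theorem \ref{thm_dean}.

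Finally I would take $S=Y\cup\{w_t\}$. Here $w_t\in N(F)\subseteq N[F]$ gives $w_t\notin Y$, so $|S|=i$, and $w_{t-1}\in N[w_t]$ yields $N[S]\supseteq Y\cup\{w_{t-1}\}$, which is a zero forcing set by the previous step; hence $N[S]$ is a zero forcing set and $S$ is power dominating by Theorem \ref{thm_dean}. But $S\cap F=\emptyset$ (since $Y\cap F=\emptyset$ and $w_t\notin F$), so $S$ misses the fort $F$ and is not a zero forcing set by Proposition \ref{prop_forts_known}. This is a power dominating set of size $i$ that is not a zero forcing set, contradicting our assumption and completing the reduction. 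I expect the crux to be precisely the sliding step, which must simultaneously preserve the power domination property, hold the size fixed at $i$, and eventually escape $F$; the rest is bookkeeping with the fort characterization and Theorem \ref{thm_dean}.
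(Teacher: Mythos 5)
Your proof is correct, but it follows a genuinely different route from the paper's. Both arguments run through the fort characterization (Proposition \ref{prop_forts_known}) and both conclude by exhibiting a power dominating set of size $i$ that misses a fort and hence is not zero forcing; the mechanisms, however, differ. The paper fixes a \emph{minimum} fort $F$, takes a zero forcing set $Z$ of size $i$, and performs a one-shot swap of $Z\cap F$ for vertices outside $Z\cup F$ with at least one landing in $N(F)$; it must then argue that the swapped set still meets $N[F']$ for every other fort $F'$, which is exactly where minimality of $F$ is invoked, and it needs a separate case analysis ($i>n-f$ versus $i\le n-f$, $F=V$ versus $F\neq V$). You avoid minimality entirely: you obtain $F$ as a witness of failure for a set one swap away from a power dominating set, and then slide the unique vertex of $C_j\cap F$ out of $F$ along a shortest path, re-certifying power domination of each intermediate set by combining the standing hypothesis (every power dominating set of size $i$ is a zero forcing set) with Theorem \ref{thm_dean}. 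Your version buys robustness: you never reason about any fort other than $F$, since Dean's theorem ($N[S]$ contains a zero forcing set) certifies power domination wholesale, thereby sidestepping the most delicate step of the paper's argument (the claim that a fort disjoint from the swapped set $Z'$ would have to be properly contained in $F$); it also absorbs the paper's case distinctions into the single dichotomy $0<p(G;i)<\binom{n}{i}$. The paper's version buys brevity and uses the equality hypothesis only once, at the final contradiction, whereas yours leans on it at every step of the induction.
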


\begin{proof}
One direction is trivial. For the other direction, let $G$ be a connected graph such that $z(G;i)=p(G;i)$ for some $i\in[n]$. 
Let $F$ be a fort of $G$ with minimum cardinality, and let $f=|F|$. Suppose first that $i>n-f$. Then for every $S,S'\subset V$ with $|S|=i$ and $|S'|\geq f$, $S\cap S'\neq \emptyset$. Since the size of every fort is at least $f$, every set of size $i$ will intersect every fort, and hence by Proposition \ref{prop_forts_known} will be zero forcing and power dominating. Thus $z(G;i)=p(G;i)=\binom{n}{i}$.

Now suppose $i\leq n-f$. If $z(G;i)=p(G;i)=0$, we are done. Thus suppose that $z(G;i)=p(G;i)\neq 0$. Let $Z$ be a zero forcing set of size $i$ and let $k:=|Z\cap F|$; note that since $Z$ is zero forcing, $k>0$. Then, $|V\backslash(Z\cup F)|=|V|-(|Z|+|F|-|Z\cap F|)=n-i-f+k\geq k$. If $F=V$, every set of size 1 is zero forcing and hence $z(G;i)=p(G;i)=\binom{n}{i}$. Otherwise, if $F\neq V$, since $G$ is connected, it follows that $N(F)\neq \emptyset$. Let $Z'$ be a set obtained by starting from $Z$, removing all $k$ vertices in $Z\cap F$, and adding $k$ vertices from $V\backslash (Z\cup F)$ in such a way that $Z'\cap N(F)\neq \emptyset$. Note that this is always possible, since $|V\backslash (Z\cup F)|\geq k$ and $N(F)\neq \emptyset$.
Then, by Proposition \ref{prop_forts_known}, $Z'$ is a power dominating set of size $i$, since it intersects every fort of $G$ except $F$ (because if $Z'$ does not intersect some fort $F'\neq F$, then $F'\subsetneq F$, contradicting that $F$ is minimum) and $Z'$ intersects $N[F]$. However, $Z'$ is not zero forcing since it does not intersect $F$. This contradicts the assumption that $z(G;i)=p(G;i)$.
\end{proof}
Note that the condition ``$G$ is connected" in Theorem \ref{thm_pd=zf_coeffs} is necessary, since, e.g., $0\neq p(K_3\dot\cup K_1;3)=3=z(K_3\dot\cup K_1;3)\neq \binom{4}{3}$. Moreover, the analogous statement for the domination polynomial does not hold, since, e.g., $0\neq p(S_4;1)=1=d(S_4;1)\neq \binom{4}{1}$. This fact, along with Theorems \ref{thm_zf=pd}, \ref{pd=d}, and \ref{thm_pd=zf_coeffs} shows that in general, the power domination polynomial of a graph coincides more often (both partially and completely) with its domination polynomial than with its zero forcing polynomial.

\subsection{Decomposition results}
\label{sect_decomp}
In this section, we present several results about computing the power domination polynomial of a graph in terms of the power domination polynomials of smaller graphs.

\begin{proposition}
\label{thm_disjoint_union}
If $G$ is a graph such that $G\simeq G_1\dot{\cup}G_2$, then $\mathcal{P}(G;x)=\mathcal{P}(G_1;x)\mathcal{P}(G_2;x)$.
\end{proposition}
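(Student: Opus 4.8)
The plan is to reduce the polynomial identity to a combinatorial decomposition of power dominating sets across the two components, and then recognize the resulting coefficient relation as polynomial multiplication. Write $V(G)=V_1\dot\cup V_2$ where $V_m=V(G_m)$, and recall that since $G\simeq G_1\dot\cup G_2$ there are no edges between $V_1$ and $V_2$.

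The key step I would establish first is the following decomposition: a set $S\subseteq V$ is a power dominating set of $G$ if and only if $S\cap V_1$ is a power dominating set of $G_1$ and $S\cap V_2$ is a power dominating set of $G_2$. To prove this, I would argue that the entire color change process respects the partition into components. Since no vertex of $V_1$ is adjacent to any vertex of $V_2$, the domination step applied to $S$ colors exactly the vertices of $N[S\cap V_1]$ in $G_1$ together with those of $N[S\cap V_2]$ in $G_2$; and in any forcing step, a colored vertex of $V_m$ has all of its neighbors in $V_m$, so it can force only within $G_m$. Consequently, the set of colored vertices after any number of steps is the disjoint union of the colored vertices obtained by running the process on $G_1$ from $S\cap V_1$ and on $G_2$ from $S\cap V_2$. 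Thus all of $G$ becomes colored exactly when all of $G_1$ and all of $G_2$ become colored, which gives the claimed equivalence.

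With this decomposition in hand, the counting is routine. Every power dominating set $S$ of $G$ of size $i$ corresponds bijectively to an ordered pair $(S_1,S_2)$ with $S_1=S\cap V_1$ a power dominating set of $G_1$ and $S_2=S\cap V_2$ a power dominating set of $G_2$, where $|S_1|+|S_2|=i$. Since the order of each $G_m$ is nonzero, the empty set is never power dominating, so $p(G_m;0)=0$, and both $S_1,S_2$ are automatically genuine power dominating sets. Summing over the possible sizes yields the convolution
\[
p(G;i)=\sum_{j} p(G_1;j)\,p(G_2;i-j),
\]
which is precisely the coefficient of $x^i$ in the product $\mathcal{P}(G_1;x)\mathcal{P}(G_2;x)$. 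Comparing coefficients for every $i$ then gives $\mathcal{P}(G;x)=\mathcal{P}(G_1;x)\mathcal{P}(G_2;x)$.

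I do not expect a serious obstacle here; the only point requiring care is the locality claim in the first step. One must justify cleanly that a forcing step in one component can never be triggered or blocked by the coloring state of the other. This follows immediately from the absence of cross-edges, but it should be stated explicitly rather than taken for granted, since the forcing rule is inherently nonlocal \emph{within} a component and one wants to be certain that no interaction leaks across the union.
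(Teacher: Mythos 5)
Your proposal is correct and follows essentially the same route as the paper: both decompose each power dominating set of $G$ into its intersections with the two components, obtain the convolution $p(G;i)=\sum_{i_1+i_2=i}p(G_1;i_1)p(G_2;i_2)$, and identify this with the coefficients of the product polynomial. The only difference is that you spell out the locality argument (no forcing or domination can cross between components), which the paper simply asserts; that extra care is harmless and arguably an improvement in rigor.
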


\proof
A power dominating set of size $i$ in $G$ consists of a power dominating set of size $i_1$ in $G_1$ and a power dominating set of size $i_2=i-i_1$ in $G_2$. Since power dominating sets of size $i_1$ and $i_2$ can be chosen independently in $G_1$ and $G_2$ for each $i_1\geq \gamma_P(G_1)$, $i_2\geq \gamma_P(G_2)$, and since $p(G_1;i_1)p(G_2;i_2)=0$ for each $i_1<\gamma_P(G_1)$ or $i_2<\gamma_P(G_2)$, it follows that $p(G;i)=\sum_{i_1+i_2=i}p(G_1;i_1)p(G_2;i_2)$. The left-hand-side of this equation is the coefficient of $x^i$ in $\mathcal{P}(G;x)$, and since $\mathcal{P}(G_1;x)=\sum_{j=\gamma_P(G_1)}^{n(G_1)}p(G_1;j)x^j$ and $\mathcal{P}(G_2;x)=\sum_{j=\gamma_P(G_2)}^{n(G_2)}p(G_2;j)x^j$, the right-hand-side of the equation is the coefficient of $x^i$ in $\mathcal{P}(G_1;x)\mathcal{P}(G_2;x)$. Thus, $\mathcal{P}(G_1;x)\mathcal{P}(G_2;x)$ and $\mathcal{P}(G;x)$ have the same coefficients and the same degree, so they are identical. 
\qed

\begin{corollary}
\label{cor_isolate}
Let $G$ be a graph on $n$ vertices. Then $p(G\dot\cup K_1;i)=p(G;i-1)$ for each $i\in[n+1]$, and  $\mathcal{P}(G\dot\cup \overline{K_k};x)=x^k\mathcal{P}(G;x)$ for all $k\in \mathbb{N}$.
\end{corollary}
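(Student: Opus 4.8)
The plan is to derive Corollary \ref{cor_isolate} directly from Proposition \ref{thm_disjoint_union}, since an isolated vertex is precisely a copy of $K_1$ attached as a disjoint component. First I would compute $\mathcal{P}(K_1;x)$: the single-vertex graph $K_1$ has exactly one power dominating set, namely the vertex itself, so $p(K_1;1)=1$ and $\mathcal{P}(K_1;x)=x$. (This matches the convention that an isolate must belong to every power dominating set, as noted in the proof of Theorem \ref{pd=d}.)

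For the first claim, I would apply Proposition \ref{thm_disjoint_union} to $G\dot\cup K_1$ to obtain $\mathcal{P}(G\dot\cup K_1;x)=\mathcal{P}(G;x)\cdot\mathcal{P}(K_1;x)=x\,\mathcal{P}(G;x)$. Comparing coefficients of $x^i$ on both sides then yields $p(G\dot\cup K_1;i)=p(G;i-1)$ for each $i$, since multiplication by $x$ shifts every coefficient up by one degree. I would note the range $i\in[n+1]$ is exactly the set of possible sizes for a power dominating set of the $(n+1)$-vertex graph $G\dot\cup K_1$.

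For the second claim, I would either iterate the first claim $k$ times (writing $\overline{K_k}$ as a disjoint union of $k$ isolated vertices and peeling them off one at a time), or more cleanly observe that $\overline{K_k}\simeq K_1\dot\cup\cdots\dot\cup K_1$ ($k$ copies) so that $\mathcal{P}(\overline{K_k};x)=x^k$ by repeated application of Proposition \ref{thm_disjoint_union}; then one more application gives $\mathcal{P}(G\dot\cup\overline{K_k};x)=\mathcal{P}(G;x)\,\mathcal{P}(\overline{K_k};x)=x^k\,\mathcal{P}(G;x)$. A small edge case worth handling is $k=0$, where $\overline{K_0}$ is the empty graph and $x^0=1$, so the statement holds trivially under the paper's stated conventions; since the paper assumes graphs have nonzero order this case may simply be excluded, and I would check whether it needs explicit mention.

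I do not anticipate a genuine obstacle here, as the result is an immediate consequence of the disjoint-union product formula. The only points requiring a word of care are the base computation $\mathcal{P}(K_1;x)=x$ and making sure the index shift in the coefficient comparison is stated correctly, so that $p(G\dot\cup K_1;i)=p(G;i-1)$ rather than an off-by-one error; both are routine.
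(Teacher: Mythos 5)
Your proposal is correct and matches the paper's intended derivation: the corollary is stated without proof precisely because it follows immediately from Proposition \ref{thm_disjoint_union} together with $\mathcal{P}(K_1;x)=x$ (equivalently $\mathcal{P}(\overline{K_k};x)=x^k$), exactly as you argue. Your coefficient-shift check and the note that the paper's convention $\mathbb{N}$ excludes $k=0$ are both sound, so nothing further is needed.
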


\begin{lemma}
\label{lemma_identity}
For any $n_1,n_2\in \mathbb{N}$,
\begin{equation*}
\sum_{i=1}^{n_1+n_2}\sum\limits_{\substack{i_1,i_2\in\mathbb{N}\\i_1+i_2=i}}\binom{n_1}{i_1}\binom{n_2}{i_2}x^i=((x+1)^{n_1}-1)((x+1)^{n_2}-1).
\end{equation*} 
\end{lemma}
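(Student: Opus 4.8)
The plan is to expand the right-hand side via the binomial theorem and then regroup the resulting double sum by total degree, matching it term by term with the left-hand side. First I would record that, by the binomial theorem, $(x+1)^{n_j}=\sum_{i_j=0}^{n_j}\binom{n_j}{i_j}x^{i_j}$ for $j\in\{1,2\}$. The only role of subtracting $1$ is to delete the $i_j=0$ term, since $\binom{n_j}{0}x^0=1$; because this paper's convention is that $\mathbb{N}$ denotes the \emph{positive} integers, this deletion is exactly what aligns the summation range with the index set $i_j\in\mathbb{N}$. Hence $(x+1)^{n_j}-1=\sum_{i_j=1}^{n_j}\binom{n_j}{i_j}x^{i_j}$.

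Next I would multiply the two resulting single-variable polynomials, giving
\begin{equation*}
((x+1)^{n_1}-1)((x+1)^{n_2}-1)=\sum_{i_1=1}^{n_1}\sum_{i_2=1}^{n_2}\binom{n_1}{i_1}\binom{n_2}{i_2}x^{i_1+i_2}.
\end{equation*}
Then I would reindex this product by collecting all terms sharing a common total exponent $i=i_1+i_2$. Since $i_1\geq 1$ and $i_2\geq 1$, the total exponent ranges over $2\leq i\leq n_1+n_2$, and for each such $i$ the inner sum runs over all pairs $(i_1,i_2)$ of positive integers with $i_1+i_2=i$. This gives exactly $\sum_{i=2}^{n_1+n_2}\sum_{i_1+i_2=i}\binom{n_1}{i_1}\binom{n_2}{i_2}x^i$, which is the stated left-hand side.

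The one point requiring a line of care — rather than a genuine obstacle — is reconciling the summation bounds: the claimed left-hand side starts its outer sum at $i=1$, whereas my regrouping naturally starts at $i=2$. I would resolve this by observing that the $i=1$ term contributes nothing, because there is no way to write $1=i_1+i_2$ with $i_1,i_2\in\mathbb{N}$, so its inner sum is empty and (by the paper's convention that an empty sum equals $0$) vanishes. Thus the two expressions agree coefficient by coefficient and the identity follows. The argument is purely formal manipulation of finite sums, so no convergence or combinatorial-bijection issues arise.
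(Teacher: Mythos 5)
Your proof is correct, but it runs in the opposite direction from the paper's and uses a different key idea. You expand each factor of the right-hand side by the binomial theorem, form the product $\sum_{i_1=1}^{n_1}\sum_{i_2=1}^{n_2}\binom{n_1}{i_1}\binom{n_2}{i_2}x^{i_1+i_2}$, and regroup by total degree, disposing of the $i=1$ term via the empty-sum convention. The paper instead starts from the left-hand side and interprets the inner sum combinatorially: it counts the subsets of size $i$ of $A\cup B$ (with $|A|=n_1$, $|B|=n_2$, disjoint) that intersect both $A$ and $B$, which by double counting equals $\binom{n_1+n_2}{i}-\binom{n_1}{i}-\binom{n_2}{i}$; summing over $i$ and applying the binomial theorem three times then produces the factored right-hand side. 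Your route is purely formal and arguably more direct; the paper's route buys an explicit combinatorial meaning for the coefficients --- sets meeting both parts --- which is precisely the interpretation exploited in the proof of Theorem \ref{thm_joins}, where these sets are the power dominating sets of the join that intersect both $V(G_1)$ and $V(G_2)$. One small point you pass over silently: after regrouping, your inner sum ranges only over pairs with $i_1\leq n_1$ and $i_2\leq n_2$, whereas the statement's inner sum ranges over all pairs of positive integers with $i_1+i_2=i$; these agree only because the paper's convention $\binom{a}{b}=0$ for $0\leq a<b$ kills the extra terms, and this deserves an explicit citation just as you gave one for the empty-sum convention.
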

\proof
Let $A$ and $B$ be disjoint sets with $|A|=n_1$ and $|B|=n_2$. The number of ways to choose a subset of $A\cup B$ of size $i$ which intersects both $A$ and $B$ is 

\begin{equation*}
\sum\limits_{\substack{i_1,i_2\in\mathbb{N}\\i_1+i_2=i}}\binom{n_1}{i_1}\binom{n_2}{i_2}.
\end{equation*}
Counting another way, this quantity is equal to 

\begin{equation*}
\binom{n_1+n_2}{i}-\binom{n_1}{i}-\binom{n_2}{i},
\end{equation*}
because $\binom{n_1}{i}$ of the $\binom{n_1+n_2}{i}$ subsets of $A\cup B$ of size $i$ contain vertices only from $A$, and $\binom{n_2}{i}$ contain vertices only from $B$. Then,
\begin{align*}
\sum_{i=1}^{n_1+n_2}\sum\limits_{\substack{i_1,i_2\in\mathbb{N}\\i_1+i_2=i}}\binom{n_1}{i_1}\binom{n_2}{i_2}x^i&=
\sum_{i=1}^{n_1+n_2}\left(\binom{n_1+n_2}{i}-\binom{n_1}{i}-\binom{n_2}{i}\right)x^i\\
&=\sum_{i=1}^{n_1+n_2}\binom{n_1+n_2}{i}x^i-\sum_{i=1}^{n_1}\binom{n_1}{i}x^i-\sum_{i=1}^{n_2}\binom{n_2}{i}x^i\\
&=((x+1)^{n_1+n_2}-1)-((x+1)^{n_1}-1)-((x+1)^{n_2}-1)\\
&=((x+1)^{n_1}-1)((x+1)^{n_2}-1),
\end{align*}
where the second and third equalities follow from the convention that ${a\choose b}=0$ if $a<b$ and from the binomial theorem.
\qed

\begin{theorem}
	\label{thm_joins}
Let $G_1$ and $G_2$ be graphs on $n_1$ and $n_2$ vertices, respectively. For $j\in \{1,2\}$, let $I_j$ equal the number of isolates of $G_j$ if $n_j>1$, and equal zero if $n_j=1$. 
%Let 
%\begin{equation*}
%b_1=\begin{cases}0 \text{ if } n_1=1\\ 1 \text{ if } n_1\geq 2\end{cases}\qquad \qquad
%b_2=\begin{cases}0 \text{ if } n_2=1\\ 1 \text{ if }n_2\geq 2.\end{cases}
%\end{equation*}
Let $G=G_1\vee G_2$. Then,
\begin{equation*}
\mathcal{P}(G;x)=(1+I_1/x)\mathcal{P}(G_1;x)+(1+I_2/x)\mathcal{P}(G_2;x)+((x+1)^{n_1}-1)((x+1)^{n_2}-1).
\end{equation*}	
\end{theorem}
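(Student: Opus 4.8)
The plan is to partition the nonempty subsets $S\subseteq V(G)$ according to how they meet the two sides of the join. Writing $S_1=S\cap V(G_1)$ and $S_2=S\cap V(G_2)$, I would show that the three summands on the right-hand side are exactly the generating functions for the three disjoint cases that exhaust all nonempty $S$: (i) $S_1\neq\emptyset$ \emph{and} $S_2\neq\emptyset$; (ii) $S_2=\emptyset$, so $\emptyset\neq S=S_1\subseteq V(G_1)$; and (iii) $S_1=\emptyset$, so $\emptyset\neq S=S_2\subseteq V(G_2)$. Since these cases are mutually exclusive and cover every nonempty subset, the three generating functions simply add.

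Case (i) is immediate: in the join every vertex on one side is adjacent to every vertex on the other, so a single initially colored vertex colors the entire opposite side during the domination step. Hence if both $S_1\neq\emptyset$ and $S_2\neq\emptyset$, the domination step already colors all of $V(G)$ and every such $S$ is a power dominating set. The number of subsets of size $i$ meeting both sides is $\sum_{i_1+i_2=i,\,i_1,i_2\geq 1}\binom{n_1}{i_1}\binom{n_2}{i_2}$, so by Lemma \ref{lemma_identity} the generating function for this case is precisely $((x+1)^{n_1}-1)((x+1)^{n_2}-1)$.

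The crux is case (ii) (case (iii) being symmetric). Here $\emptyset\neq S=S_1\subseteq V(G_1)$, and the domination step colors all of $V(G_2)$ together with $N_{G_1}[S_1]$. The key observation is that the subsequent forcing in $G$ reduces to zero forcing \emph{inside} $G_1$ until at most one vertex of $V(G_1)$ remains uncolored: a vertex of $V(G_1)$ forces exactly according to its $G_1$-neighbors (all of $V(G_2)$ being already colored), while a vertex of $V(G_2)$ can force only when exactly one vertex of $V(G_1)$ is still uncolored. Therefore, if the zero forcing closure of $N_{G_1}[S_1]$ within $G_1$ is all of $V(G_1)$ then $S_1$ power dominates $G$; if it leaves exactly one vertex $z$ uncolored, a vertex of $V(G_2)$ forces $z$ and $S_1$ again power dominates $G$; and if it leaves two or more vertices uncolored, no further forcing is possible and $S_1$ does not power dominate $G$. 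By Theorem \ref{thm_dean} (equivalently, that $N_{G_1}[S_1]$ is a zero forcing set of $G_1$ iff $S_1$ power dominates $G_1$), the first alternative counts exactly the power dominating sets of $G_1$, contributing $\mathcal{P}(G_1;x)$.

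It remains to count the ``extra'' sets of size $i$, namely those $S_1$ for which the $G_1$-closure of $N_{G_1}[S_1]$ misses exactly one vertex $z$. I would first argue that such a $z$ must be \emph{isolated} in $G_1$: once $z$ is the unique uncolored vertex, any colored $G_1$-neighbor of $z$ would force it. Since an isolated vertex lies in $N_{G_1}[S_1]$ only if it lies in $S_1$, we have $z\notin S_1$, and because $z$ is isolated the $G_1$-closure of $N_{G_1}[S_1\cup\{z\}]=N_{G_1}[S_1]\cup\{z\}$ equals $V(G_1)$ if and only if the $G_1$-closure of $N_{G_1}[S_1]$ equals $V(G_1)\setminus\{z\}$. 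Thus $S_1$ is an extra set with missing vertex $z$ exactly when $z$ is an isolated vertex of $G_1$, $z\notin S_1$, and $S_1\cup\{z\}$ is a power dominating set of $G_1$. The missing vertex is uniquely determined by $S_1$, so summing over the $I_1$ isolated vertices causes no overcounting; and since every power dominating set of $G_1$ must contain every isolated vertex, for each fixed isolated $z$ the sets $T=S_1\cup\{z\}$ of size $i+1$ range over all $p(G_1;i+1)$ power dominating sets of $G_1$ of that size. Hence there are $I_1\,p(G_1;i+1)$ extra sets of size $i$, and the generating function for case (ii) is $\mathcal{P}(G_1;x)+\frac{I_1}{x}\mathcal{P}(G_1;x)=(1+I_1/x)\mathcal{P}(G_1;x)$. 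The convention $I_j=0$ when $n_j=1$ is exactly what keeps this valid at the low-order end: if $n_1\geq 2$ and $G_1$ has an isolated vertex then $\gamma_P(G_1)\geq 2$, so $(I_1/x)\mathcal{P}(G_1;x)$ contributes no spurious constant term, while if $n_1=1$ the factor $I_1$ vanishes. I expect this bookkeeping around isolated vertices, together with the verification that forcing in $G$ genuinely reduces to $G_1$-forcing plus a single ``last-vertex'' force from the opposite side, to be the main obstacle; the remaining manipulations are routine.
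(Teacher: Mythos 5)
Your proof is correct and takes essentially the same route as the paper's: the same three-way partition of sets according to which sides of the join they intersect, the same use of Lemma \ref{lemma_identity} for sets meeting both sides, and the same count $p(G_j;i)+I_j\,p(G_j;i+1)$ for one-sided sets, with the observation that $I_j\,p(G_j;1)=0$ disposing of the low-order term. The only difference is one of detail: you fully justify the characterization of one-sided power dominating sets (forcing reduces to $G_1$-forcing plus at most one final force from the opposite side, and the single missed vertex must be an isolate), a step the paper asserts with only a brief sufficiency argument.
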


\proof
The power dominating sets of $G$ can be partitioned into those which intersect both $V(G_1)$ and $V(G_2)$, and those which intersect only one of $V(G_1)$ or $V(G_2)$. Since each vertex of $G_1$ is adjacent to each vertex of $G_2$ and vice versa, any set of vertices which intersects both $V(G_1)$ and $V(G_2)$ is a power dominating set of $G$. Moreover, there are 

%\begin{equation*}
%\sum\limits_{\substack{i_1,i_2\in\mathbb{N}\\i_1+i_2=i}}\binom{n_1}{i_1}\binom{n_2}{i_2}
%\end{equation*} 

\begin{equation*}
\sum\limits_{\substack{i_1,i_2\in\mathbb{N}\\i_1+i_2=i}}\binom{n_1}{i_1}\binom{n_2}{i_2}
\end{equation*} 
such sets of size $i$, for any $i\in [n_1+n_2]$. For $j\in \{1,2\}$, a set $S\subset V(G_j)$ is a power dominating set of $G$ if and only if one of the following conditions holds: 

\begin{enumerate}
\item $S$ is a power dominating set of $G_j$.
\item $I_j\geq 1$ and $S$ is a power dominating set of $G_j-v$, where $v$ is an isolate of $G_j$.
\end{enumerate}
Note that in the second case, $S$ is a power dominating set because $V(G)\backslash V(G_j)$ will be colored in the domination step by some vertex other than $v$, then $N[S]\cap V(G_j)$ will force any uncolored vertices of $G_j-v$, and finally $v$ can be forced by any vertex in $V(G)\backslash V(G_j)$. For any $i\in [n_1+n_2]$, there are $p(G_j;i)$ power dominating sets of $G_j$ of size $i$, and $p(G_j;i+1)$ power dominating sets of $G_j-v$ of size $i$ for each isolate $v$ of $G_j$ (recall that $p(G_j;k)=0$ if $k>n_j$). Therefore,

\begin{eqnarray*}
\mathcal{P}(G;x)&=&\sum\limits_{i=1}^{n_1+n_2}\left(\sum\limits_{\substack{i_1,i_2\in\mathbb{N}\\i_1+i_2=i}}\binom{n_1}{i_1}\binom{n_2}{i_2}+p(G_1;i)+I_1p(G_1;i+1)+p(G_2;i)+I_2p(G_2;i+1)\right)x^i\\
&=&((x+1)^{n_1}-1)((x+1)^{n_2}-1)+\mathcal{P}(G_1;x)+\mathcal{P}(G_2;x)+\\
&&+I_1(\mathcal{P}(G_1;x)/x-p(G_1;1))+I_2(\mathcal{P}(G_2;x)/x-p(G_2;1))\\
&=&(1+I_1/x)\mathcal{P}(G_1;x)+(1+I_2/x)\mathcal{P}(G_2;x)+((x+1)^{n_1}-1)((x+1)^{n_2}-1),
\end{eqnarray*}
where the second equality follows from Lemma \ref{lemma_identity}, and the last equality follows from the fact that, for $j\in \{1,2\}$, $I_jp(G_j;1)=0$ since if $I_j>0$, then then $G_j$ has at least two connected components, so $p(G_j;1)=0$. \qed

\begin{corollary}
\label{cor_dom_vertex}
Let $G$ be a graph on $n$ vertices, and 
$I$ equal the number of isolates of $G$ if $n>1$, and equal zero if $n=1$. Let $G'$ be the graph obtained from $G$ by adding a dominating vertex. Then $\mathcal{P}(G';x)=(1+I/x)\mathcal{P}(G;x)+x(x+1)^{n}$.
\end{corollary}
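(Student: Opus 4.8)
The plan is to recognize that adjoining a dominating vertex to $G$ is exactly forming the join $G'=G\vee K_1$, so the statement is a direct specialization of Theorem \ref{thm_joins} with $G_2=K_1$. First I would set $G_1=G$ on $n_1=n$ vertices and $G_2=K_1$ on $n_2=1$ vertex, and record the three quantities the formula needs. By the convention in the hypothesis of Theorem \ref{thm_joins}, $I_2=0$ because $n_2=1$, while $I_1=I$. Since the only power dominating set of a single vertex is that vertex itself, $\mathcal{P}(K_1;x)=x$.

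Next I would substitute these values into the join formula term by term. The factor $(1+I_1/x)\mathcal{P}(G_1;x)$ becomes $(1+I/x)\mathcal{P}(G;x)$. The factor $(1+I_2/x)\mathcal{P}(G_2;x)$ collapses to simply $x$, since $I_2=0$ and $\mathcal{P}(K_1;x)=x$. Finally, the product term becomes $((x+1)^{n}-1)((x+1)-1)=x((x+1)^{n}-1)=x(x+1)^{n}-x$.

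The only remaining work is an algebraic simplification: the stray $+x$ coming from the second join term cancels with the $-x$ arising from expanding $x((x+1)^{n}-1)$, leaving exactly $(1+I/x)\mathcal{P}(G;x)+x(x+1)^{n}$, as claimed. There is no genuine obstacle here, as the argument is a one-line specialization; the only things to watch are this cancellation and the correct invocation of the $n_2=1$ convention that forces $I_2=0$, so that we do not erroneously introduce an $I_2/x$ term by treating the lone vertex of $K_1$ as an isolate.
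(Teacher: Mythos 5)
Your proof is correct and is precisely the argument the paper intends: the corollary is stated as an immediate consequence of Theorem \ref{thm_joins}, obtained by taking $G_2=K_1$ (so $n_2=1$, $I_2=0$ by the stated convention, and $\mathcal{P}(K_1;x)=x$), and your substitution and cancellation of the $+x$ against the $-x$ from $x((x+1)^n-1)$ reproduce the claimed formula exactly.
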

\vspace{9pt}

\begin{theorem}
\label{theorem_corona}
Let $H$ be a graph with vertex set $\{v_1,\ldots,v_n\}$. For $1\leq i\leq n$, let $G_i$ be a graph of order $n_i$ with $p(G_i;1)=n_i$, and let $u_i$ be any vertex of $G_i$, except, if $G_i$ is a path, $u_i$ is not an endpoint of the path. Let $G$ be the graph obtained by identifying $u_i$ and $v_i$ for $1\leq i\leq n$. Then,
\begin{equation*}
\mathcal{P}(G;x)=\prod_{i=1}^n((x+1)^{n_i}-1).
\end{equation*}
\end{theorem}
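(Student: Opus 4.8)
The plan is to reduce the identity to a clean set-theoretic description of the power dominating sets of $G$, and then run everything through the fort machinery of Proposition \ref{prop_forts_known}. The first observation is that, since each $G_i$ meets the rest of $G$ only at the single identified vertex $u_i=v_i$, the vertex sets $V(G_1),\dots,V(G_n)$ are pairwise disjoint and partition $V(G)$. Consequently $\prod_{i=1}^n((x+1)^{n_i}-1)=\prod_{i=1}^n\big(\sum_{\emptyset\neq S_i\subseteq V(G_i)}x^{|S_i|}\big)$ is exactly the generating function $\sum_S x^{|S|}$ taken over those $S\subseteq V(G)$ meeting every $V(G_i)$ (a set meeting every branch corresponds bijectively to a tuple of nonempty $S_i=S\cap V(G_i)$, with $|S|=\sum_i|S_i|$). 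Thus the whole theorem reduces to the claim that $S$ is a power dominating set of $G$ if and only if $S\cap V(G_i)\neq\emptyset$ for every $i$.

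Next I would record two fort facts coming from the hypotheses. Because $p(G_i;1)=n_i$, every singleton of $G_i$ is a power dominating set of $G_i$, so by Proposition \ref{prop_forts_known} every vertex of $G_i$ lies in $N_{G_i}[F']$ for every fort $F'$ of $G_i$; equivalently, \emph{every} fort of $G_i$ has $N_{G_i}[F']=V(G_i)$. Separately, the hypothesis on $u_i$ forces $\{u_i\}$ to fail to be a zero forcing set of $G_i$: if $G_i$ is not a path then $Z(G_i)\geq 2$, and if $G_i$ is a path then a non-endpoint vertex is never a zero forcing set. By Proposition \ref{prop_forts_known}, $\{u_i\}$ not being zero forcing produces a fort $F_i$ of $G_i$ with $u_i\notin F_i$, so $F_i\subseteq V(G_i)\setminus\{u_i\}$.

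For the necessity direction I would verify that each such $F_i$ is in fact a fort of $G$: since $F_i$ avoids the cut vertex $u_i$, every $G$-neighbor of $F_i$ lies in $V(G_i)$, and for vertices of $V(G_i)$ the adjacencies to $F_i$ agree in $G$ and in $G_i$, so the fort condition transfers verbatim (in particular at $u_i$ itself, whose external $H$-edges reach no vertex of $F_i$). Hence $N_G[F_i]\subseteq V(G_i)$, and since any power dominating set of $G$ must meet $N_G[F_i]$ by Proposition \ref{prop_forts_known}, it must meet $V(G_i)$; as $i$ is arbitrary, this gives one direction.

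The converse is where I expect the real work, precisely because the cut vertex $u_i$ has neighbors \emph{outside} $V(G_i)$, which can spoil the naive transfer of forts between $G$ and $G_i$. By Observation \ref{obs_subset} it suffices to treat a transversal $T=\{s_1,\dots,s_n\}$ with $s_i\in V(G_i)$, and by Proposition \ref{prop_forts_known} it is enough to show $T\cap N_G[F]\neq\emptyset$ for every fort $F$ of $G$. The key lemma I would prove is that every fort $F$ of $G$ has some branch $i$ for which $F\cap V(G_i)$ is a nonempty fort of $G_i$, splitting on whether some $v_j\in F$: if $v_j\in F$ then $F\cap V(G_j)$ is automatically a fort of $G_j$, since every candidate violator lies in $V(G_j)$ and differs from the cut vertex; and if no $v_j\in F$, then for any branch meeting $F$ the cut vertex has no neighbor of $F$ outside $V(G_i)$, so again the intersection inherits the fort condition. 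Feeding this lemma into the first fort fact yields $V(G_i)=N_{G_i}[F\cap V(G_i)]\subseteq N_G[F]$, so $s_i\in N_G[F]$ and $T$ meets $N_G[F]$, completing sufficiency. The crux throughout is controlling behaviour at the cut vertices $u_i$: the path hypothesis is exactly what guarantees an ``internal'' fort in the necessity direction, while the case analysis on whether $v_j\in F$ is what neutralizes the external edges of $u_i$ in the sufficiency direction.
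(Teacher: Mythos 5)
Your proof is correct, but it takes a genuinely different route from the paper's. The paper reduces the theorem to the claim that the power dominating sets of $G$ coincide with those of the disjoint union $G'=G_1\dot\cup\cdots\dot\cup G_n$ (then applies Propositions \ref{prop_all_sets} and \ref{thm_disjoint_union}), and proves that claim dynamically: sufficiency is an explicit propagation argument tracking how forcing proceeds inside each branch, pauses at $u_i$, and resumes once the external neighbors of the $u_i$'s are colored; necessity goes through Theorem \ref{thm_dean}, noting that if $S$ missed $V(G_j)$ then $N[V(G)\setminus V(G_j)]\cap V(G_j)$, which is $\{u_j\}$ or empty, would have to be a zero forcing set of $G[V(G_j)]$, possible only for an endpoint of a path. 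You instead work entirely with the static fort characterization (Proposition \ref{prop_forts_known}): for necessity you convert the same path-endpoint fact into a fort $F_i\subseteq V(G_i)\setminus\{u_i\}$ and check that it survives as a fort of $G$, while for sufficiency you prove a fort-localization lemma (every fort of $G$ restricts to a nonempty fort of some branch $G_i$) and combine it with the observation that $p(G_i;1)=n_i$ forces $N_{G_i}[F']=V(G_i)$ for every fort $F'$ of $G_i$; both the lemma and the fort transfer are verified correctly, with the two-case analysis on whether some $v_j\in F$ doing exactly the work needed to neutralize the external edges at the cut vertices. The trade-off: the paper's proof recycles its decomposition machinery and is shorter, but its sufficiency direction rests on an informal description of the forcing process across branches; your proof demands more careful bookkeeping at the cut vertices, but in exchange replaces the dynamic argument with a fully rigorous static one, handles the isolated-vertex case of $H$ uniformly rather than separately, and makes explicit the hypergraph-covering structure (a set is power dominating iff it hits every $N[F]$) underlying the theorem.
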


\proof
Let $G'=\dot\bigcup_{i=1}^n G_i$. We will show that $S$ is a power dominating set of $G'$ if and only if $S$ is a power dominating set of $G$. By Propositions \ref{prop_all_sets} and \ref{thm_disjoint_union}, it will follow that $\mathcal{P}(G;x)=\prod_{i=1}^n\left(\sum_{j=1}^{n_i}\binom{n_i}{j}x^j\right)=\prod_{i=1}^n((x+1)^{n_i}-1)$.

Let $S$ be a power dominating set of $G'$. Then $S$ contains at least one vertex of $V(G_i)$, $1\leq i\leq n$. Moreover, for $1\leq i\leq n$, since $u_i$ is the only vertex of $G[V(G_i)]$ which may have neighbors outside $G[V(G_i)]$ in $G$, $S\cap V(G_i)$ will power dominate in $G[V(G_i)]$ as it does in $G'[V(G_i)]$ until $u_i$ is colored, at which point the forcing could possibly stop. However, since forcing will proceed in each $G[V(G_i)]$, all vertices $u_i$ in $G$ will eventually be colored by the respective $S\cap V(G_i)$. Then, all neighbors of each $u_i$ outside $G[V(G_i)]$ will be colored, and forcing will resume in each $G[V(G_i)]$ as in $G'[V(G_i)]$. Thus, $S$ will power dominate all of $G$.

Now let $S$ be a power dominating set of $G$. Suppose there is some $j\in [n]$ for which $S\cap V(G_j)=\emptyset$. Since $S\subset V(G)\backslash V(G_j)$, by Observation \ref{obs_subset}, $V(G)\backslash V(G_j)$ is a power dominating set of $G$. Then, by Theorem \ref{thm_dean}, $N[V(G)\backslash V(G_j)]\cap V(G_j)$ is a zero forcing set of $G[V(G_j)]$. If $v_j$ is an isolate in $H$, then $N[V(G)\backslash V(G_j)]\cap V(G_j)=\emptyset$, so $S$ cannot be a power dominating set of $G$, a contradiction. Otherwise, $N[V(G)\backslash V(G_j)]\cap V(G_j)=\{u_j\}$. However, the only graph for which a single vertex is a zero forcing set is a path, and this happens only when the vertex is an endpoint of the path. This contradicts our assumption that if $G_i$ is a path, $u_i$ is not an endpoint of the path. Thus, $S\cap V(G_j)\neq\emptyset$, so $S$ contains at least one vertex $x_i$ of $V(G_i)$ for all $i\in [n]$. Then, since $p(G_i;1)=n_i$, $\{x_i\}$ is a power dominating set of $G'[V(G_i)]$ for all $i$, so $S$ is a power dominating set of $G'$.
\qed

\vspace{9pt}

\section{Characterizing $\mathcal{P}(G;x)$ for specific graphs}
\label{section_characterizations}
In this section, we give closed-form expressions and algorithms to compute the power domination polynomials of several families of graphs, as well as some characterizations of uniqueness. We have also implemented a brute force algorithm for computing the power domination polynomials of arbitrary graphs (cf. \url{https://github.com/rsp7/Power-Domination-Polynomial}), and used it to compute the power domination polynomials of all graphs on fewer than 10 vertices.\\

\begin{proposition}
	\label{prop_closed_form1}
	\begin{enumerate}
		\item[]
		\item $\mathcal{P}(K_n;x)=\mathcal{P}(P_n;x)=\mathcal{P}(C_n;x)=\mathcal{P}(W_n;x)=(x+1)^n-1$.%, where $n\ge1$ for $K_n,P_n$, $n\ge3$ for $C_n$, and $n\ge4$ for $W_n$.
		\item $\mathcal{P}(\overline{K_n};x)=x^n$.
		\item $\mathcal{P}(S_n;x)=x(x+1)^{n-1}+x^{n-1}+(n-1)x^{n-2}$, for $n\geq 3$.
		\item $\mathcal{P}(H\circ K_k;x)=((x+1)^{k+1}-1)^n$ for any graph $H$ of order $n$, and $k>1$. 
	\end{enumerate}
\end{proposition}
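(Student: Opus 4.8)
The plan is to handle the four parts separately and to lean on the structural machinery already developed. For part 1, the key reduction is that in each of $K_n$, $P_n$, $C_n$, and $W_n$, every single vertex is a power dominating set, i.e.\ $p(G;1)=n=\binom{n}{1}$. For $K_n$ this is immediate since any vertex dominates all others; for $C_n$ it follows by symmetry after checking one vertex (the domination step colors three consecutive vertices, and forcing then propagates around the cycle); for $P_n$ one checks both an endpoint and an internal vertex; and for $W_n$ one checks the hub (which dominates) and a rim vertex (whose closed neighborhood colors the hub and two rim neighbors, after which each colored rim vertex has a unique uncolored rim neighbor, so forcing sweeps the rim). Once $p(G;1)=\binom{n}{1}$ is established, Proposition \ref{prop_all_sets} gives $p(G;i)=\binom{n}{i}$ for all $i\in[n]$, whence $\mathcal{P}(G;x)=\sum_{i=1}^n\binom{n}{i}x^i=(x+1)^n-1$. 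Part 2 is even simpler: $\overline{K_n}=\dot\bigcup_{i=1}^n K_1$, and since $\mathcal{P}(K_1;x)=x$, Proposition \ref{thm_disjoint_union} (or Corollary \ref{cor_isolate}) gives $\mathcal{P}(\overline{K_n};x)=x^n$.

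Part 3 requires an explicit count and is where most of the work lies. Writing $c$ for the center of $S_n$ and $\ell_1,\dots,\ell_{n-1}$ for the leaves, I would split the power dominating sets $S$ according to whether $c\in S$. If $c\in S$, the domination step colors every leaf, so every such $S$ is power dominating; these contribute $\sum_{i=1}^n\binom{n-1}{i-1}x^i=x(x+1)^{n-1}$. If $c\notin S$, then $S$ is a set of leaves; the domination step colors $c$, but no leaf can ever force (its only neighbor $c$ is already colored), so the only possible force is $c$ coloring a leaf, which happens only when exactly one leaf remains uncolored. Hence $S$ is power dominating if and only if it omits at most one leaf, contributing the single set of all $n-1$ leaves (term $x^{n-1}$) together with the $n-1$ sets of size $n-2$ (term $(n-1)x^{n-2}$). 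Summing yields $\mathcal{P}(S_n;x)=x(x+1)^{n-1}+x^{n-1}+(n-1)x^{n-2}$.

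Part 4 I would derive directly from Theorem \ref{theorem_corona}. The point is that in $H\circ K_k$, each vertex $v_i$ of $H$ together with its attached copy of $K_k$ induces a clique $K_{k+1}$ (since $v_i$ is joined to all $k$ clique vertices, which are mutually adjacent), and $H\circ K_k$ is exactly the graph obtained by identifying, for each $i$, a vertex $u_i$ of a copy $G_i\simeq K_{k+1}$ with $v_i$. To invoke Theorem \ref{theorem_corona} I must verify its hypotheses: $p(K_{k+1};1)=k+1=n_i$ holds because any single vertex of a complete graph dominates it, and $K_{k+1}$ is not a path precisely when $k>1$, so the non-endpoint condition is vacuous. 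The theorem then gives $\mathcal{P}(H\circ K_k;x)=\prod_{i=1}^n((x+1)^{k+1}-1)=((x+1)^{k+1}-1)^n$, and the hypothesis $k>1$ in the statement is exactly what guarantees the gadget is not a path. The main obstacle across the proposition is the careful tracking of the forcing dynamics in part 3 — in particular, recognizing that a leaf can never trigger a force and that the center forces only when a unique leaf remains — since everything else reduces cleanly to Propositions \ref{prop_all_sets} and \ref{thm_disjoint_union} and to Theorem \ref{theorem_corona}.
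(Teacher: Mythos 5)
Your proof is correct, and three of its four parts coincide with the paper's own argument: in part 1 you establish $p(G;1)=\binom{n}{1}$ for $K_n$, $P_n$, $C_n$, $W_n$ and fill in the remaining coefficients (the paper cites Observation \ref{obs_subset} where you cite Proposition \ref{prop_all_sets}, an immaterial difference since the latter is a direct consequence of the former); part 2 is trivial either way; and part 4 is exactly the paper's application of Theorem \ref{theorem_corona} with $G_i\simeq K_{k+1}$, including the remark that $k>1$ rules out the path/endpoint issue. The only genuine divergence is part 3. The paper writes $S_n\simeq \overline{K_{n-1}}\lor K_1$ and invokes Corollary \ref{cor_dom_vertex} (the dominating-vertex specialization of the join formula, Theorem \ref{thm_joins}), getting $\mathcal{P}(S_n;x)=(1+(n-1)/x)x^{n-1}+x(x+1)^{n-1}$ in one line, whereas you count power dominating sets directly by splitting on whether the center lies in $S$ and analyzing the dynamics: a leaf can never force, and the center forces only when exactly one leaf remains uncolored, so the sets avoiding the center are precisely those omitting at most one leaf. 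Your count is correct and self-contained, independent of the join machinery of Section \ref{sect_decomp}; the paper's route buys brevity and showcases its decomposition formulas, but conceals the forcing analysis that your argument makes explicit. Both are sound, and your enumeration matches the closed form term by term.
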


\proof
\begin{enumerate}
	\item[]
	\item It is easy to see that any vertex of $K_n$, $P_n$, $C_n$, or $W_n$ forms a power dominating set of size 1. Then, by Observation \ref{obs_subset}, every set of vertices of these graphs is a power dominating set. It follows that $\mathcal{P}(K_n;x)=\mathcal{P}(P_n;x)=\mathcal{P}(C_n;x)=\mathcal{P}(W_n;x)=\sum_{i=1}^n\binom{n}{i}x^i=(x+1)^n-1$.
	\item The only power dominating set of $\overline{K_n}$ is the set containing all of its vertices, so $\mathcal{P}(\overline{K_n};x)=x^n$.
	\item Since $S_n\simeq \overline{K_{n-1}}\lor K_1$, by Corollary \ref{cor_dom_vertex} it follows that $\mathcal{P}(S_n;x)=(1+(n-1)/x)\mathcal{P}(\overline{K_{n-1}};x)+x(x+1)^{n-1}=x^{n-1}+(n-1)x^{n-2}+x(x+1)^{n-1}$.
	\item This follows by Theorem \ref{theorem_corona}, where $G_i\simeq K_{k+1}$ for $1\leq i\leq n$. Note that since $k>1$, each $(k+1)$-clique whose vertex is identified with a vertex of $H$ is different from a path (i.e., it is not $K_1$ or $K_2$).
	\qed
\end{enumerate}
\vspace{9pt}

\noindent A graph $G=(V,E)$ is a \emph{threshold graph} if there exists a real number $t$ and a function $w:V\to \mathbb{R}$ such that $uv\in E$ if and only if $w(u)+w(v)\geq t$. For a binary string $B$, the \emph{threshold graph generated by $B$}, written $T(B)$, is the graph whose vertices are the symbols in $B$, and which has an edge between a pair of symbols $x$ and $y$ with $x$ to the left of $y$ if and only if $y=1$. It was shown by Chv{\'a}tal and Hammer \cite{chvatal} that every threshold graph is generated by some binary string, and that for a particular threshold graph, this string is unique apart from the first symbol (since changing the first symbol  from 0 to 1, or from 1 to 0, would not affect $T(B)$). Thus we will refer to $B$ and $T(B)$ interchangeably, where symbols of $B$ correspond to vertices in $T(B)$. 

A \emph{block} of a binary string $B$ is a maximal contiguous substring consisting either of only 0s or only 1s. The partition of $B$ into its blocks is called the \emph{block partition} of $B$, and when the block partition of $B$ has $\omega$ blocks, we label the blocks $B_{i}$, $1\leq i\leq \omega$, and write $B = B_{1}B_{2}\dots B_{\omega}$. Each $B_{i}$ consisting of 0s is called a 0-block, and each $B_{i}$ consisting of 1s is a 1-block. Similarly, we will call a vertex in a 0-block a 0-vertex, and a vertex in a 1-block a 1-vertex. We will refer to $|B_i|$ as $b_i$.     We will assume that $T(B)$ is connected, i.e. that $B_{\omega}$ is a 1-block. Note that if $B_{\omega}$ is a 1-block, then by Corollary \ref{cor_isolate}, $\mathcal{P}(T(B_1 B_2\ldots B_{\omega} B_{\omega+1});x)=x^{b_{\omega+1}}\mathcal{P}(T(B_1 B_2\ldots B_{\omega});x)$.
We will also assume without loss of generality that $B$ has at least two symbols, and that the first symbol of $B$ is the same as the second symbol, so that $b_1\geq 2$. We now give a characterization of the power dominating sets of a threshold graph, and an algorithm to efficiently compute the coefficients of the power domination polynomial.
\newpage

\begin{theorem}
Let $T = T(B)$ be a threshold graph generated by a binary string $B$. A set $S \subset V(T)$ is a power dominating set of $T$ if and only if there exists $v \in S$ such that
	\begin{enumerate}
		\item[$a)$]  All 1-vertices are in $N[v]$.
		\item[$b)$]  All 0-blocks following or including the block containing $v$ have at most one vertex not in $S$.
	\end{enumerate}
\end{theorem}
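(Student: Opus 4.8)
The plan is to prove the two implications separately, analyzing the propagation process directly for the ``if'' direction and invoking the fort characterization of Proposition~\ref{prop_forts_known} for the ``only if'' direction. First I would record the adjacency structure of $T$ implied by the generating string: a $1$-vertex is adjacent to every vertex to its left and to every $1$-vertex to its right, while a $0$-vertex is adjacent only to the $1$-vertices to its right. Two consequences are worth isolating at the outset. First, $N[v]$ contains every $1$-vertex exactly when $v$ is a $1$-vertex or a $0$-vertex lying to the left of all $1$-vertices; since blocks alternate and $B_\omega$ is a $1$-block, the latter happens precisely when $B_1$ is a $0$-block and $v\in B_1$. Thus the vertices satisfying condition~$a)$ are exactly the $1$-vertices together with the vertices of $B_1$ when $B_1$ is a $0$-block. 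Second, a $1$-vertex dominates everything to its left in a single domination step.

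For the ``if'' direction, suppose $v\in S$ satisfies $a)$ and $b)$, and let $B_k$ be the block containing $v$. By $a)$ all $1$-vertices lie in $N[v]\subseteq N[S]$ and are colored after the domination step; moreover any $0$-block strictly before $B_k$ is fully colored then, since such a block occurs only when $v$ is a $1$-vertex, and in that case all of its vertices lie to the left of $v$. By $b)$, each $0$-block at or after $B_k$ has at most one vertex outside $S$, hence at most one uncolored vertex. I would then force these blocks from left to right: for the leftmost $0$-block $C$ at or after $B_k$ that still has an uncolored vertex, every vertex of the (nonempty) $1$-block immediately following $C$ is adjacent to all $0$-vertices on its left, all of which are now colored except the single remaining vertex of $C$, which is therefore forced. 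Iterating colors all of $T$; since a sequence of valid forces that colors every vertex certifies that $S$ is power dominating, this completes the direction.

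For the ``only if'' direction I would argue through forts, the computation of which I expect to be the technical core. Three facts suffice: (1)~any two $0$-vertices $w_1,w_2$ in a common $0$-block $B_j$ form a fort $F$ with $N[F]=\{w_1,w_2\}\cup\{1\text{-vertices to the right of }B_j\}$; (2)~if $B_1$ is a $0$-block, then $B_1$ is a fort with $N[B_1]=B_1\cup\{\text{all }1\text{-vertices}\}$; and (3)~if $B_1$ is a $1$-block, then any two vertices of $B_1$ form a fort whose closed neighborhood is the set of all $1$-vertices. Each is verified by checking that every outside vertex is adjacent to none or both of the chosen pair (for (1) and (3)) or to at least two vertices of the block (for (2)), using $b_1\ge 2$ for (2) and (3). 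By Proposition~\ref{prop_forts_known}, a power dominating set $S$ must meet $N[F]$ for each fort; facts (2)--(3) (according to whether $B_1$ is a $0$- or a $1$-block) then force $S$ to contain a vertex satisfying~$a)$.

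It remains to select a single $v\in S$ meeting both conditions, which I regard as the main obstacle. I would let $B_{j^*}$ be the rightmost $0$-block containing at least two vertices outside $S$, if such a block exists. If it does not, then every $0$-block has at most one vertex outside $S$, so any $v\in S$ satisfying~$a)$ (one exists by the previous paragraph) automatically satisfies~$b)$. If $B_{j^*}$ does exist, then applying fact~(1) to two of its vertices outside $S$ shows $S$ contains a $1$-vertex $v$ to the right of $B_{j^*}$; this $v$ satisfies~$a)$, and every $0$-block at or after its block lies strictly to the right of $B_{j^*}$ and hence has at most one vertex outside $S$, yielding~$b)$. The fort computations, though central, are routine once the adjacency structure is fixed; the genuine subtlety is this case analysis pinning down $v$, together with the left-to-right bookkeeping in the converse.
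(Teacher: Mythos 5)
Your proof is correct, and its ``if'' direction is essentially the paper's: color all 1-vertices in the domination step, then repeatedly force the unique uncolored vertex of the leftmost deficient 0-block using a vertex of the 1-block that follows it (you are in fact slightly more explicit than the paper about why 0-blocks strictly before the block containing $v$ are already dominated when $v$ is a 1-vertex). Where you genuinely diverge is the ``only if'' direction. The paper argues the contrapositive directly on the propagation process: if no $v\in S$ satisfies $a)$, then $S$ avoids $B_1$ and all 1-vertices, and since every vertex outside $B_1$ is adjacent to all or none of $B_1$ and $b_1\ge 2$, the vertices of $B_1$ are never colored; if $a)$ holds but $b)$ fails for every witness, the paper takes the largest-index block $B_i$ containing a witness, extracts a 0-block $B_j$ with $j\ge i$ having two vertices outside $S$ and no $S$-vertices in later 1-blocks, and shows those two vertices are never colored. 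You instead build an explicit inventory of forts of a threshold graph (a pair of 0-vertices in a common 0-block; the block $B_1$ when it is a 0-block; a pair of vertices of $B_1$ when it is a 1-block), invoke Proposition~\ref{prop_forts_known} to force $S$ to meet their closed neighborhoods, and then locate a single witness $v$ via the rightmost 0-block with two vertices outside $S$. The combinatorial content is identical --- the paper's ``stuck'' sets are exactly your forts, and its observation that any vertex adjacent to one of them is adjacent to all of them is precisely the fort property --- but the two framings trade different costs: the paper's contrapositive argument is self-contained and never needs to exhibit a witness satisfying both conditions simultaneously, whereas your direct argument must do that selection work (your $B_{j^*}$ case analysis, which is carried out correctly), and in exchange it produces a reusable description of the forts and fort neighborhoods of threshold graphs that ties this theorem to the fort machinery and integer program of Section~\ref{section_struct}. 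Both arguments rely on the standing assumption $b_1\ge 2$.
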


%$v \in S$, represented by $x_k \in B$
\begin{proof}
Suppose conditions $a)$ and $b)$ are true. By condition $a)$, all 1-vertices are in $N[v]$ and will be colored in the domination step. Let $B_i$ be the block with the smallest index which contains an uncolored vertex $u$ after the domination step. Since all 1-vertices are colored, $B_i$ is a 0-block; by condition $b)$, $u$ is the only uncolored vertex in $B_i$. Then, any 1-vertex in $B_{i+1}$ can force $u$. This process can be repeated until all vertices are colored; thus, $S$ is a power dominating set.

Now suppose condition $a)$ is false. Since all 1-vertices are adjacent to each other, $S$ contains no 1-vertices; moreover, since for each vertex $v$ in $B_1$, all 1-vertices are in $N[v]$, $S$ does not contain any vertices from $B_1$. The vertices in $B_1$ are therefore not in $S$ and not adjacent to any vertex in $S$, so they can only be colored in a forcing step. However, since any vertex not in $B_1$ is adjacent to all or none of the vertices in $B_1$, and since $|B_1| \geq 2$, these vertices can never be forced.

Finally, suppose condition $a)$ is true and $b)$ is false. Then, for any $v\in S$ satisfying condition $a)$, some 0-block following or including the block containing $v$ has at least two vertices not in $S$. Let $B_i$ be the block with the largest index which contains a vertex of $S$ satisfying condition $a)$. 
%Since condition $a)$ holds, $B_i$ must be a 1-block. 
$B_i\neq B_{\omega}$, since then condition $b)$ would be true. Thus, some 0-block $B_j$ with $j\geq i$ has at least two vertices not in $S$, and $S$ does not contain vertices of any 1-block $B_k$ with $k>j$. Then, the vertices in $B_j$ which are not in $S$ cannot be colored by the domination step; they also cannot be forced, since any vertex which is adjacent to one of them is adjacent to all of them. 
\end{proof}

\begin{algorithm2e}[h!]
\textbf{Input:} Binary string $B=B_1\ldots B_\omega$\;
\textbf{Output:} Values $a_1,\ldots,a_n$ such that $\mathcal{P}(T(B);x)=\sum_{i=1}^na_ix^i$\;
\For{$i=1$ \emph{\textbf{to}} $n$}{
$a_i\leftarrow 0$\;
}
\If{$B_1$ \emph{is a 1-block}}{
\For{$j=1$ \emph{\textbf{to}} $b_1$}{
$a_j\leftarrow \binom{b_1}{j}$\;
}
$i\leftarrow 2$\;
}
\If{$B_1$ \emph{is a 0-block}}{
$a_{b_1}\leftarrow 1$\;
$a_{b_1-1}\leftarrow b_1$\;
\For{$j=1$ \emph{\textbf{to}} $b_1+b_2$}{
$a_j\leftarrow a_j+\binom{b_1+b_2}{j}$\;
}
\For{$j=1$ \emph{\textbf{to}} $b_1$}{
$a_j\leftarrow a_j-\binom{b_1}{j}$\;
}
$i\leftarrow 3$;
}
\While{$i\leq \omega-1$}{
$s\leftarrow b_1+\ldots+b_{i-1}$\;
\For{$j=s+b_i$ \emph{\textbf{to}} $b_i+1$}{
$a_j\leftarrow a_{j-b_i}$\;
}

\For{$j=1$ \emph{\textbf{to}} $b_i$}{
$a_j\leftarrow 0$\;
}
$i\leftarrow i+1$\;
$s\leftarrow b_1+\ldots+b_{i-1}$\;
\For{$j=1$ \emph{\textbf{to}} $s-1$}{
$a_j\leftarrow a_j+a_{j+1}b_{i-1}$\;
}
\For{$j=1$ \emph{\textbf{to}} $s+b_i$}{
$a_j\leftarrow a_j+\binom{s+b_i}{j}$\;
}
\For{$j=1$ \emph{\textbf{to}} $s$}{
$a_j\leftarrow a_j-\binom{s}{j}$\;
}
$i\leftarrow i+1$\;
}
\Return{$a_1,\ldots,a_n$}\;
\caption{Finding coefficients of $\mathcal{P}(T(B);x)$}

\label{alg_thr}
\end{algorithm2e}

\begin{theorem}
Let $T = T(B)$ be a threshold graph generated by a binary string $B$. The coefficients of $\mathcal{P}(T;x)$ can be computed in $O(n^2)$ time with Algorithm \ref{alg_thr}.
\end{theorem}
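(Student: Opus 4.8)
The plan is to prove correctness by induction on the blocks of $B$, carrying the loop invariant that after the algorithm has finished processing the prefix $B_1\cdots B_m$, the array $(a_1,\ldots,a_n)$ stores the coefficients of $\mathcal{P}(T(B_1\cdots B_m);x)$ in positions $1$ through $b_1+\cdots+b_m$ and zeros elsewhere. The $O(n^2)$ bound will then fall out of a routine accounting of the loop ranges. The conceptual core is to recognize each block-append as one of two graph operations and to match the corresponding array updates to identities already established in Corollary~\ref{cor_isolate} and Theorem~\ref{thm_joins}.

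First I would record two structural facts, both immediate from the edge rule of $T(B)$. Appending a 0-block $B_i$ of size $b_i$ on the right adds $b_i$ vertices adjacent to nothing to their left and to each other, so $T(B_1\cdots B_i)=T(B_1\cdots B_{i-1})\,\dot\cup\,\overline{K_{b_i}}$; by Corollary~\ref{cor_isolate} this multiplies the polynomial by $x^{b_i}$. Appending a 1-block $B_i$ adds a clique each of whose vertices is adjacent to everything to its left, so $T(B_1\cdots B_i)=T(B_1\cdots B_{i-1})\vee K_{b_i}$. Moreover, whenever a 1-block is appended, the block $B_{i-1}$ immediately before it is a 0-block, and since $T(B_1\cdots B_{i-2})$ ends in a 1-block (hence is connected), the graph $T(B_1\cdots B_{i-1})$ has exactly $b_{i-1}$ isolates, namely the vertices of $B_{i-1}$. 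Thus Theorem~\ref{thm_joins} applies with $I_1=b_{i-1}$, $I_2=0$, $n_1=s:=b_1+\cdots+b_{i-1}$, and $n_2=b_i$.

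For the base case I would verify the initialization directly: if $B_1$ is a 1-block then $T(B_1)=K_{b_1}$ and the first loop sets $a_j=\binom{b_1}{j}$, matching $(x+1)^{b_1}-1$; if $B_1$ is a 0-block, the code processes $B_1B_2$ together, and I would check that the assignments reproduce the coefficients of $\mathcal{P}(\overline{K_{b_1}}\vee K_{b_2};x)$, which by Theorem~\ref{thm_joins} simplifies to $x^{b_1}+b_1x^{b_1-1}+(x+1)^{b_1+b_2}-(x+1)^{b_1}$. For the inductive step, each pass of the while loop handles a 0-block $B_i$ followed by a 1-block $B_{i+1}$. I would argue that the shift $a_j\leftarrow a_{j-b_i}$ (done in decreasing $j$) together with zeroing the bottom $b_i$ entries realizes multiplication by $x^{b_i}$, and that the three remaining loops realize the coefficient form of the join identity: keeping the current $a_j$ supplies the $\mathcal{P}(G_1;x)$ term, the update $a_j\leftarrow a_j+b_{i-1}a_{j+1}$ supplies the $(I_1/x)\mathcal{P}(G_1;x)$ term, and adding $\binom{s+b_i}{j}$ while subtracting $\binom{s}{j}$ supplies the surviving part $(x+1)^{s+b_i}-(x+1)^{s}$ of $\mathcal{P}(K_{b_i};x)+((x+1)^{n_1}-1)((x+1)^{n_2}-1)$ after cancellation.

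The step I expect to require the most care is this last matching. I must confirm that the in-place update $a_j\leftarrow a_j+b_{i-1}a_{j+1}$, performed for increasing $j$, reads the pre-update value of $a_{j+1}$ (which holds, since $a_{j+1}$ is only overwritten at the later iteration $j+1$), and that the $1/x$ shift introduces no spurious constant term, which holds because $G_1$ has isolates and hence $p(G_1;1)=0$. I would also verify the telescoping that cancels the constant terms of $(x+1)^{s+b_i}$ and $(x+1)^{s}$, and confirm that the loop bounds (with $j$ up to $s+b_i\le n$) truncate exactly where the corresponding coefficients vanish. Finally, for the running time I would note that the blocks alternate and $B_\omega$ is a 1-block, so the while loop makes $O(\omega)=O(n)$ passes, each performing $O(n)$ arithmetic operations; precomputing the required binomial coefficients via Pascal's rule costs $O(n^2)$, giving the claimed $O(n^2)$ bound overall.
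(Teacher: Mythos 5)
Your proposal is correct and follows essentially the same route as the paper's proof: decompose each block-append as either a disjoint union with $\overline{K_{b_i}}$ (multiplication by $x^{b_i}$) or a join with $K_{b_i}$ (the formula from Theorem \ref{thm_joins} with $I_1=b_{i-1}$, $I_2=0$), verify that the initialization and each while-loop pass implement these two recurrences, and bound the runtime by $O(\omega)=O(n)$ passes of $O(n)$ work each. Your additional care about in-place update order (decreasing $j$ for the shift, increasing $j$ for the $1/x$ term), the vanishing constant term since $p(T_{i-1};1)=0$, and the $O(n^2)$ precomputation of binomials are sound refinements of details the paper leaves implicit.
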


\proof
Let $B=B_1 B_2 \ldots B_\omega$ and for $i\in [\omega]$, let $T_i=T(B_1 B_2 \ldots B_i)$; thus, $T=T_\omega$. For $2\leq i\leq \omega$, if $B_i$ is a 0-block, then $T_i=T_{i-1}\dot\cup \overline{K_{b_i}}$, so by Proposition \ref{thm_disjoint_union},
\begin{equation}
\label{eq_thr_1}
\mathcal{P}(T_i;x)=x^{b_i}\mathcal{P}(T_{i-1};x).
\end{equation}
For $2\leq i\leq \omega$, if $B_i$ is a 1-block, then $T_i=T_{i-1}\lor K_{b_i}$, so by Theorem \ref{thm_joins} (and since by assumption, $b_1>1$),
\begin{equation}
\label{eq_thr_2}
\mathcal{P}(T_i;x)=\mathcal{P}(T_{i-1};x)+\frac{b_{i-1}}{x}\mathcal{P}(T_{i-1};x)+(x+1)^{b_1+\ldots+b_i}-(x+1)^{b_1+\ldots+b_{i-1}}.
\end{equation}
Algorithm \ref{alg_thr} begins with an all-zero array of coefficients for $\mathcal{P}(T;x)$. In the first if-statement, if $B_1$ is a 1-block, the coefficients of $\mathcal{P}(T_1;x)$ are computed by the binomial expansion of $(x+1)^{b_1}$. In the second if-statement, if $B_1$ is a 0-block, by our assumption $T$ has at least one 1-block; thus, $\mathcal{P}(T_2;x)$ is computed by first setting ``$a_{b_1}\leftarrow 1$" for $\mathcal{P}(T_1;x)=x^{b_1}$, and then setting ``$a_{b_1-1}\leftarrow b_1$" for $\frac{b_1}{x}\mathcal{P}(T_1;x)$, adding the binomial expansion of $(x+1)^{b_1+b_2}$, and subtracting the binomial expansion of $(x+1)^{b_1}$ as in \eqref{eq_thr_2}.

After the two initial if-statements, the while-loop evaluates $\mathcal{P}(T_i;x)$ and $\mathcal{P}(T_{i+1};x)$ for $i\leq \omega-1$, starting with $i$ equal to either 2 or 3 depending on whether $B_1$ was a 1-block or a 0-block; in either case, the first block (if any) to be evaluated by the while-loop is a 0-block, followed by a 1-block. The first two for-loops in the while-loop shift the indices of the coefficients of $\mathcal{P}(T_{i-1};x)$ to the right by $b_i$, which is equivalent to multiplying $\mathcal{P}(T_{i-1};x)$ by $x^{b_i}$ to obtain $\mathcal{P}(T_i;x)$ as in \eqref{eq_thr_1}. Then, $i$ is incremented as a 1-block is to be added. The third for-loop adds to each coefficient $a_j$ the coefficient to the right of $a_j$ multiplied by $b_{i-1}$; this is equivalent to adding $\frac{b_{i-1}}{x}\mathcal{P}(T_{i-1};x)$ to $\mathcal{P}(T_{i-1};x)$ as in \eqref{eq_thr_2}. The last two for-loops respectively add the binomial expansion of $(x+1)^{b_1+\ldots+b_i}$ and subtract the binomial expansion of $(x+1)^{b_1+\ldots+b_{i-1}}$, as in \eqref{eq_thr_2}. Thus, by the end of the last for-loop, $\mathcal{P}(T_i;x)$ is computed from $\mathcal{P}(T_{i-1};x)$ as in \eqref{eq_thr_2}, and then $i$ is incremented. When $i=\omega+1$, the while-loop terminates, and the last computed polynomial is $\mathcal{P}(T_\omega;x)=\mathcal{P}(T;x)$; its coefficients $a_1,\ldots,a_n$ are returned by the algorithm.

To verify the runtime, note that the first for-loop and the two if-statements in Algorithm \ref{alg_thr} can each be evaluated in $O(n)$ time; the while-loop executes $O(\omega)=O(n)$ times, with each for-loop inside it taking $O(n)$ time. Thus, the total runtime is $O(n^2)$.
\qed

\subsection{$\mathcal{P}$-unique graphs}

In this section, we identify several families of graphs which can be recognized by their power domination polynomials; more precisely, we identify families $\mathcal{F}$ such that if $G\in\mathcal{F}$, then $\mathcal{P}(H;x)=\mathcal{P}(G;x)$ implies $H\simeq G$. Let us call families of graphs satisfying this property \emph{$\mathcal{P}$-unique}. We also identify arbitrarily large sets of graphs which all have the same power domination polynomial and show that the power domination polynomial is generally not effective at measuring vertex connectivity.

\begin{theorem}
\label{thm_isolate_unique}
Let $G$ be a graph.
\begin{enumerate}
\item $G$ is $\mathcal{P}$-unique if and only if $G\dot\cup K_1$ is $\mathcal{P}$-unique.
\item $G$ is $\mathcal{P}$-unique if and only if $G\dot\cup K_2$ is $\mathcal{P}$-unique.
\item For any $k\geq 3$, $G\dot\cup K_k$ is not $\mathcal{P}$-unique.
\end{enumerate}
\end{theorem}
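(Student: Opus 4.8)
The plan is to handle all three parts through the multiplicativity of the power domination polynomial over disjoint unions (Proposition \ref{thm_disjoint_union}) together with Corollary \ref{cor_isolate}, which give $\mathcal{P}(G\dot\cup K_1;x)=x\,\mathcal{P}(G;x)$ and $\mathcal{P}(G\dot\cup K_2;x)=x(x+2)\,\mathcal{P}(G;x)$ (using $\mathcal{P}(K_2;x)=\mathcal{P}(P_2;x)=(x+1)^2-1$ from Proposition \ref{prop_closed_form1}). For parts 1 and 2 I would prove each biconditional by ``peeling off'' the attached component and dividing the polynomial by the corresponding fixed factor; the only genuinely non-routine ingredient is, in the forward directions, certifying that an arbitrary graph $H$ sharing the polynomial of $G\dot\cup K_1$ (respectively $G\dot\cup K_2$) must itself contain an isolate (respectively a $K_2$-component), so that it can be decomposed at all.

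For part 1, the forward direction assumes $G$ is $\mathcal{P}$-unique and takes $H$ with $\mathcal{P}(H;x)=x\,\mathcal{P}(G;x)$. The key step is to recover the number of isolates of $H$ directly from its polynomial: by Corollary \ref{cor_degrees} the number of isolates equals $n-p(\cdot;n-1)$, so $H$ has exactly one more isolate than $G$ and in particular at least one. Writing $H=H'\dot\cup K_1$ and applying Corollary \ref{cor_isolate} gives $x\,\mathcal{P}(H';x)=x\,\mathcal{P}(G;x)$, hence $\mathcal{P}(H';x)=\mathcal{P}(G;x)$; $\mathcal{P}$-uniqueness of $G$ then yields $H'\simeq G$ and thus $H\simeq G\dot\cup K_1$. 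The backward direction is the mirror image: from $\mathcal{P}(H;x)=\mathcal{P}(G;x)$ we get $\mathcal{P}(H\dot\cup K_1;x)=\mathcal{P}(G\dot\cup K_1;x)$, so $H\dot\cup K_1\simeq G\dot\cup K_1$, and cancelling one isolated component from each (the multiset of connected components is an isomorphism invariant, and multiset union is cancellative) gives $H\simeq G$.

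Part 2 runs identically with $K_2$ in place of $K_1$. Here I would use the third part of Corollary \ref{cor_degrees} to recover the number of $K_2$-components from $p(\cdot;n-2)$ (after first recovering the isolate count from $p(\cdot;n-1)$), so that any $H$ with $\mathcal{P}(H;x)=x(x+2)\,\mathcal{P}(G;x)$ is forced to contain a $K_2$-component to split off. Dividing by the nonzero polynomial $x(x+2)$ and invoking $\mathcal{P}$-uniqueness of $G$ handles the forward direction, and the same component-multiset cancellation, now cancelling a $K_2$, handles the backward direction.

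For part 3 I would exhibit an explicit non-isomorphic graph with the same polynomial. By Proposition \ref{prop_closed_form1}, $\mathcal{P}(P_k;x)=(x+1)^k-1=\mathcal{P}(K_k;x)$, while for $k\geq 3$ the path $P_k$ has $k-1$ edges and $K_k$ has $\binom{k}{2}>k-1$ edges, so $P_k\not\simeq K_k$. Setting $H=G\dot\cup P_k$ and using Proposition \ref{thm_disjoint_union} gives $\mathcal{P}(H;x)=\mathcal{P}(G;x)\big((x+1)^k-1\big)=\mathcal{P}(G\dot\cup K_k;x)$, yet $H\not\simeq G\dot\cup K_k$: since $P_k$ and $K_k$ are connected, an isomorphism would equate the component multisets of the two graphs, and cancelling the common components of $G$ would force $P_k\simeq K_k$, a contradiction. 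I expect the main obstacle to be the forward directions of parts 1 and 2 --- specifically, establishing via Corollary \ref{cor_degrees} that the added isolate or $K_2$-component is visible in, and hence recoverable from, the polynomial of any competitor $H$; once that is in place, everything reduces to dividing out a fixed polynomial factor and a routine cancellation of connected components.
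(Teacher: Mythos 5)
Your proposal is correct and takes essentially the same route as the paper's proof: detect the added $K_1$- or $K_2$-component in any competitor $H$ via the coefficients in Corollary \ref{cor_degrees}, peel it off and cancel the factor $\mathcal{P}(K_1;x)=x$ or $\mathcal{P}(K_2;x)=x(x+2)$ using Proposition \ref{thm_disjoint_union}, and for part 3 exhibit $G\dot\cup P_k$ versus $G\dot\cup K_k$ via Proposition \ref{prop_closed_form1}. The only cosmetic difference is that you spell out the component-multiset cancellation argument ($G_1\simeq G_2$ iff $G_1\dot\cup G_3\simeq G_2\dot\cup G_3$) that the paper states as a one-line observation.
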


\proof
Note that for any graphs $G_1$, $G_2$, and $G_3$, $G_1\simeq G_2$ if and only if $G_1\dot\cup G_3\simeq G_2\dot\cup G_3$. Let $K$ be either the graph $K_1$ or $K_2$.

Suppose that $G$ is $\mathcal{P}$-unique, and let $H$ be a graph such that $\mathcal{P}(G\dot\cup K;x)=\mathcal{P}(H;x)$. By Corollary \ref{cor_degrees}, $G\dot\cup K$ and $H$ have the same number of $K$-components; in particular, $H=H'\dot\cup K$ for some $H'$. Using Proposition \ref{thm_disjoint_union}, we have that $\mathcal{P}(G;x)=\mathcal{P}(G\dot\cup K;x)/\mathcal{P}(K;x)=\mathcal{P}(H;x)/\mathcal{P}(K;x)=\mathcal{P}(H';x)$. Since $G$ is $\mathcal{P}$-unique, this implies that $G\simeq H'$ and hence that $G\dot\cup K\simeq H$, which means that $G\dot\cup K$ is also $\mathcal{P}$-unique.

Suppose $G\dot\cup K$ is $\mathcal{P}$-unique and let $H$ be a graph such that $\mathcal{P}(G;x)=\mathcal{P}(H;x)$. By Proposition \ref{thm_disjoint_union}, we have that $\mathcal{P}(G\dot\cup K;x)=\mathcal{P}(G;x)\mathcal{P}(K;x)=\mathcal{P}(H;x)\mathcal{P}(K;x)=\mathcal{P}(H\dot\cup K;x)$. Since $G\dot\cup K$ is $\mathcal{P}$-unique, this implies that $G\dot\cup K\simeq H\dot\cup K$ and hence that $G\simeq H$, which means that $G$ is also $\mathcal{P}$-unique.

By Propositions \ref{thm_disjoint_union} and \ref{prop_closed_form1}, for $k\geq 3$, $\mathcal{P}(G\dot\cup K_k)=\mathcal{P}(G\dot\cup P_k)$ but $G\dot\cup K_k\not\simeq G\dot\cup P_k$.\qed

\begin{corollary}
\begin{enumerate}
\item[]
\item $\overline{K_n}$ is $\mathcal{P}$-unique.
\item $\dot{\bigcup}_{i=1}^kK_2$ is $\mathcal{P}$-unique, for all $k\in\mathbb{N}$.
\item $\left(\dot{\bigcup}_{i=1}^kK_2\right)\dot\cup\overline{K_\ell}$ is $\mathcal{P}$-unique, for all $k,\ell\in\mathbb{N}$.
\end{enumerate}
\end{corollary}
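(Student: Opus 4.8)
The plan is to treat all three items uniformly: Theorem \ref{thm_isolate_unique} shows that adjoining or removing a $K_1$- or $K_2$-component preserves $\mathcal{P}$-uniqueness, so each of these graphs is a disjoint union of copies of $K_1$ and $K_2$ that can be reduced, one component at a time, to a base graph of order one or two. The one genuinely separate step is verifying those base cases: parts (1) and (2) of Theorem \ref{thm_isolate_unique} each relate a graph to one with strictly more vertices, so the recursion must be grounded by checking the smallest graphs directly, and the order-$0$ graph is unavailable by the standing convention that $n(G)\neq 0$. Accordingly I would first show that $K_1$ and $K_2$ are $\mathcal{P}$-unique. Since $p(G;n)=1$ by Corollary \ref{cor_degrees}, the degree of $\mathcal{P}(H;x)$ always equals $n(H)$. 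Thus any $H$ with $\mathcal{P}(H;x)=\mathcal{P}(K_1;x)=x$ has order $1$ and so $H\simeq K_1$; and any $H$ with $\mathcal{P}(H;x)=\mathcal{P}(K_2;x)=(x+1)^2-1=x^2+2x$ (using Proposition \ref{prop_closed_form1}) has order $2$, where among the two graphs on two vertices only $K_2$ has this polynomial since $\mathcal{P}(\overline{K_2};x)=x^2$, giving $H\simeq K_2$.

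With the base cases in hand, each item follows by a short induction. For item $1$, I would induct on $n$: the base case $\overline{K_1}=K_1$ is $\mathcal{P}$-unique, and since $\overline{K_n}=\overline{K_{n-1}}\,\dot\cup\,K_1$, Theorem \ref{thm_isolate_unique}(1) gives that $\overline{K_n}$ is $\mathcal{P}$-unique whenever $\overline{K_{n-1}}$ is. For item $2$, I would induct on $k$: the base case $K_2$ is $\mathcal{P}$-unique, and since $\dot{\bigcup}_{i=1}^{k}K_2=\big(\dot{\bigcup}_{i=1}^{k-1}K_2\big)\,\dot\cup\,K_2$, Theorem \ref{thm_isolate_unique}(2) supplies the inductive step. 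For item $3$, I would invoke item $2$ to get that $\dot{\bigcup}_{i=1}^{k}K_2$ is $\mathcal{P}$-unique, and then apply Theorem \ref{thm_isolate_unique}(1) a total of $\ell$ times, since $\big(\dot{\bigcup}_{i=1}^{k}K_2\big)\,\dot\cup\,\overline{K_\ell}$ is obtained from $\dot{\bigcup}_{i=1}^{k}K_2$ by successively adjoining $\ell$ copies of $K_1$.

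The main obstacle, such as it is, is entirely at the foot of the induction rather than in the inductive machinery: one must establish $\mathcal{P}$-uniqueness of $K_1$ and $K_2$ without appealing to Theorem \ref{thm_isolate_unique}, precisely because its biconditionals shift the number of vertices and so cannot by themselves terminate the recursion at a graph of positive order. Once these two facts are checked by the elementary order-and-coefficient argument above, the remainder is a routine repeated application of parts (1) and (2) of that theorem.
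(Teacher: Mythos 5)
Your proof is correct and takes essentially the same route the paper intends: the corollary is left without proof precisely because it is the inductive closure of Theorem \ref{thm_isolate_unique} under adjoining $K_1$- and $K_2$-components, which is exactly the machinery you use. Your explicit grounding of the induction at $K_1$ and $K_2$ (via the degree argument from $p(G;n)=1$ and Proposition \ref{prop_closed_form1}) is sound and fills in the only detail the paper leaves implicit.
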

\newpage

\begin{theorem}
\label{thm_star_unique}
$S_n$ is $\mathcal{P}$-unique for $n\geq 4$.
\end{theorem}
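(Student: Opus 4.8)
The plan is to assume $\mathcal{P}(H;x)=\mathcal{P}(S_n;x)$ for some graph $H$ and recover $H\simeq S_n$ by first reading off a band of low-order coefficients and then running one forcing argument repeatedly. By Proposition~\ref{prop_closed_form1},
\[
\mathcal{P}(S_n;x)=x(x+1)^{n-1}+x^{n-1}+(n-1)x^{n-2},
\]
whose degree gives $n(H)=n$ and whose correction terms $x^{n-1}+(n-1)x^{n-2}$ affect only degrees $n-1$ and $n-2$; hence $p(H;1)=1$ and $p(H;i)=\binom{n-1}{i-1}$ for $1\le i\le n-3$. Since the power domination number is additive over disjoint unions (Proposition~\ref{thm_disjoint_union} forces the lowest-degree terms to multiply), $\gamma_P(H)=1$ makes $H$ connected, and $p(H;1)=1$ yields a \emph{unique} singleton power dominating set $\{v\}$. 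The crucial consequence I would extract is: for $1\le i\le n-3$, the $\binom{n-1}{i-1}$ size-$i$ supersets of $\{v\}$ are all power dominating (Observation~\ref{obs_subset}) and already exhaust $p(H;i)$, so every power dominating set avoiding $v$ has size at least $n-2$. Call this $(\star)$.

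The engine is the following observation, which I would justify from the definition together with monotonicity of the forcing process: since $\{v\}$ is power dominating, if $S\subseteq N(v)$ is nonempty and, after the single domination step applied to $S$, at most one neighbor of $v$ is left uncolored, then $S$ is power dominating. Indeed, a nonempty $S\subseteq N(v)$ colors $v$ at the domination step; the resulting colored set $N[S]$ then contains $v$ together with all of $N(v)$ except possibly one vertex $a_0$, which $v$ now forces as its unique uncolored neighbor; once all of $N[v]$ is colored the process finishes exactly as it does from $N[v]$, coloring all of $V$.

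With this engine I would first show $v$ is dominating. Taking $S=N(v)$ (no neighbor of $v$ is uncolored) shows $N(v)$ is power dominating; as $v\notin N(v)$, $(\star)$ forces $|N(v)|\ge n-2$. To rule out $|N(v)|=n-2$, take $S=N(v)\setminus\{a_0\}$ for any $a_0\in N(v)$: after domination only $a_0$ can be an uncolored neighbor of $v$, so the engine makes $S$ a power dominating set of size $n-3$ avoiding $v$, contradicting $(\star)$; hence $|N(v)|=n-1$. Writing $W=V\setminus\{v\}$, I would then show $W$ is independent: if $xy$ were an edge of $W$, choose $w\in W\setminus\{x,y\}$ (possible since $|W|=n-1\ge 3$) and set $S=W\setminus\{x,w\}$; after domination $x$ is colored through $y\in S$, so at most $w$ remains uncolored among the neighbors of $v$, and the engine again yields a size-$(n-3)$ power dominating set avoiding $v$, contradicting $(\star)$. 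Thus $v$ is dominating and $W$ is independent, i.e.\ $H\simeq\overline{K_{n-1}}\lor K_1=S_n$. Note all three applications of the engine only require $n-3\ge 1$, so the case $n=4$ is handled along with the rest.

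The main obstacle is recognizing that the size-$2$ coefficient is \emph{not} enough. For $n\ge 6$ one can exhibit non-stars that match $S_n$ in both $p(\cdot;1)$ and $p(\cdot;2)$: take $v$ joined to all vertices except one vertex $z$, with $N(v)$ independent and $z$ adjacent to a single neighbor of $v$ (so $v$ has $n-3\ge 3$ pendant leaves); here no two vertices of $W$ form a power dominating set, yet the graph is not a star. Such examples are separated from $S_n$ only at size $n-3$, which is exactly why the whole band $(\star)$ is needed rather than $p(H;2)=n-1$. The delicate point is precisely the boundary case $|N(v)|=n-2$, where $N(v)\subseteq N[S]$ fails and domination alone does not finish; it is the extra forcing step by $v$ built into the engine that closes the gap.
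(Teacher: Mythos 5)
Your proof is correct, and it turns on the same mechanism as the paper's: both identify the distinguished vertex $v$ from $p(\cdot;1)=1$, both use the fact that every power dominating set of $S_n$ of size at most $n-3$ contains the center, and both finish by producing, in any non-star, a power dominating set of size $n-3$ avoiding $v$; your third application (delete $v$, one endpoint of an edge inside $N(v)$, and a third vertex) is exactly the paper's second case. But your organization is genuinely different. The paper argues contrapositively: for $H\not\simeq S_n$ with $\gamma_P(H)=1$ it splits into two cases --- some neighbor of $v$ sees a vertex outside $N[v]$, or $v$ is dominating and two of its neighbors are adjacent --- and in each case concludes $p(H;n-3)>\binom{n-1}{n-4}=p(S_n;n-3)$, so it separates $S_n$ from every competitor using only the coefficients at sizes $1$ and $n-3$. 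You instead assume the polynomials are equal, read off the full band $p(H;i)=\binom{n-1}{i-1}$ for $i\le n-3$, and reconstruct the star in stages: $|N(v)|\ge n-2$ because $N(v)$ itself power dominates and avoids $v$, then $|N(v)|=n-1$, then $N(v)$ independent. Your ``engine'' is essentially Theorem \ref{thm_dean} together with monotonicity of the forcing process, stated once as a reusable lemma where the paper reruns the forcing argument ad hoc in each case; a pleasant consequence of your ordering is that the path case ($|N(v)|=1$) and the distance-two case never need separate treatment. The trade-off: the paper's route yields the sharper quantitative fact that the single coefficient at size $n-3$ already witnesses non-uniqueness, while yours gives a uniform structural reconstruction and, via your closing example with $|N(v)|=n-2$, an explanation of why no coefficient earlier than $n-3$ could have sufficed.
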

\proof
Since $\gamma_P(S_n)=1$, it suffices to show that if $H$ is a graph on $n$ vertices such that $\gamma_P(H)=1$ and $H\not\simeq S_n$, then $\mathcal{P}(H;x)$ differs from $\mathcal{P}(S_n;x)$ in at least one coefficient. Let $\{v\}$ be a power dominating set of $H$. Since any power dominating set of $H$ must contain at least one vertex from each connected component, $H$ is connected and hence $v$ has at least one neighbor. If $v$ has exactly one neighbor, then the fact that $\{v\}$ is power dominating implies that $H\simeq P_n$, in which case $p(H;1)=n>p(S_n;1)$ by Proposition \ref{prop_closed_form1}.

Hence, suppose that $v$ has at least two neighbors. If some neighbor $u$ of $v$ is adjacent to a vertex $w$ that $v$ is not adjacent to, then $V(H)\setminus\{u,v,w\}$ is a power dominating set of $H$, since some other neighbor of $v$ will dominate $v$, and then $v$ can force $u$ and $u$ can force $w$. Otherwise, since $n\ge4$ and $H$ is connected but not a star, $v$ has at least three neighbors, two of which, say $u$ and $w$, are adjacent. If $x$ is a third neighbor of $v$, then $V(H)\setminus\{u,v,x\}$ is a power dominating set of $H$, since $w$ can dominate $v$ and $u$, and $v$ can force $x$. In each of these two cases, each of the $\binom{n-1}{n-4}$ subsets of $V(H)$ including $v$ and excluding any three vertices is power dominating by Observation \ref{obs_subset}. Note that $p(S_n;n-3)=\binom{n-1}{n-4}$ by Proposition \ref{prop_closed_form1}, so $p(H;n-3)>p(S_n;n-3)$. Thus,  in all cases, $\mathcal{P}(H;x)\neq \mathcal{P}(S_n;x)$.
\qed
\vspace{9pt}

\noindent Note that the condition ``$n\geq 4$" in Theorem \ref{thm_star_unique} is necessary, since for the degenerate case $n=3$, $\mathcal{P}(S_3;x)=\mathcal{P}(K_3;x)$ but $S_3\not\simeq K_3$.

\begin{proposition}
\label{prop_connected}
\begin{enumerate}
\item[]
\item There exist arbitrarily large sets of graphs that all have the same power domination polynomial.
\item A connected and a disconnected graph can have the same power domination polynomial.
\item Graphs with vertex connectivity $1$, $2$, and $n-1$ can have the same power domination polynomial.
\end{enumerate}
\end{proposition}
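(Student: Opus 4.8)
All three statements will follow from the closed-form computations already established, together with the product formulas of Proposition \ref{thm_disjoint_union} and Theorem \ref{theorem_corona}. The guiding idea is that these formulas make the power domination polynomial depend only on the \emph{orders} of certain building blocks and not on how those blocks are assembled, so I can freely vary the structure while pinning the polynomial in place. Thus the whole proof reduces to choosing good explicit families and checking a few bookkeeping side conditions.

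For part 1, the plan is to exploit the equality $\mathcal{P}(P_4;x)=\mathcal{P}(C_4;x)=\mathcal{P}(K_4;x)=(x+1)^4-1$ from Proposition \ref{prop_closed_form1}. Given any target size $N$, I would pick $n$ large enough that the number $\binom{n+2}{2}$ of multisets of size $n$ over the three symbols $\{P_4,C_4,K_4\}$ exceeds $N$, and form, for each such multiset, the disjoint union of the corresponding $n$ graphs. By Proposition \ref{thm_disjoint_union}, every one of these disjoint unions has power domination polynomial $((x+1)^4-1)^n$. Since $P_4$, $C_4$, and $K_4$ are pairwise non-isomorphic connected graphs, two such disjoint unions are isomorphic only when they arise from the same multiset of components, so this yields more than $N$ pairwise non-isomorphic graphs sharing a single polynomial.

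For part 2, I would exhibit a connected and a disconnected realization of the same product polynomial. Taking $H=P_2$ in Theorem \ref{theorem_corona} and identifying a vertex of a triangle $K_3$ at each of its two vertices produces a connected graph $G$ (two triangles joined by an edge) with $\mathcal{P}(G;x)=((x+1)^3-1)^2$; here the hypotheses of Theorem \ref{theorem_corona} hold since $p(K_3;1)=3$ and $K_3$ is not a path. On the other hand, $K_3\,\dot\cup\,K_3$ is disconnected and, by Proposition \ref{thm_disjoint_union}, also has polynomial $((x+1)^3-1)^2$. For part 3, I would invoke Proposition \ref{prop_closed_form1} directly: for $n\geq 4$ the graphs $P_n$, $C_n$, and $K_n$ all have power domination polynomial $(x+1)^n-1$, while their vertex connectivities are $1$, $2$, and $n-1$, and these three values are distinct precisely because $n\geq 4$.

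Since every polynomial identity used is an immediate consequence of an earlier result, there is no deep obstacle; the only points requiring genuine care are combinatorial bookkeeping. In part 1, I must confirm that distinct component multisets really give non-isomorphic graphs — clear because the components are connected and pairwise non-isomorphic — and that $\binom{n+2}{2}\to\infty$. In part 3, the one thing to verify carefully is that the three vertex-connectivity values are pairwise distinct, which is exactly what forces the hypothesis $n\geq 4$ (at $n=3$ one would have $K_3$ of connectivity $2$, colliding with $C_3$).
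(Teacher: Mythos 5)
Your proof is correct, but it takes a genuinely different route from the paper's in part 1. There, the paper stays inside connected graphs of a fixed order: it adds a single chord to $C_n$, observes that every singleton is still a power dominating set, and concludes via Observation \ref{obs_subset} and Proposition \ref{prop_closed_form1} that all $\lfloor n/2\rfloor-1$ pairwise non-isomorphic chorded cycles share the polynomial $(x+1)^n-1$ with $C_n$. You instead form disjoint unions of $n$ graphs drawn from $\{P_4,C_4,K_4\}$ and use multiplicativity (Proposition \ref{thm_disjoint_union}) to pin the polynomial at $\left((x+1)^4-1\right)^n$, with the $\binom{n+2}{2}$ component multisets giving pairwise non-isomorphic graphs. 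Your route is more mechanical, since it needs no fresh verification that a chorded cycle is power dominated by any single vertex, but it produces only disconnected witnesses of growing order; the paper's construction buys the stronger fact that the polynomial cannot even distinguish among connected graphs on the same vertex set. For part 2, your example (two triangles joined by an edge versus $K_3\,\dot\cup\,K_3$) is precisely the paper's example with the two cycles specialized to triangles; the paper argues directly that the power dominating sets of the bridged graph and of the edge-deleted graph coincide, while you obtain the connected side from Theorem \ref{theorem_corona} --- both are valid, and your appeal to the corona theorem avoids the ad hoc argument at the cost of less generality (the paper's works for any two cycle lengths). Part 3 is identical to the paper's, and your added remark that $n\geq 4$ is what keeps the three connectivities distinct (since $C_3=K_3$) is a worthwhile detail the paper leaves implicit.
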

\proof

\begin{enumerate}
\item[]
\item Adding a single chord to $C_n$, $n\ge 4$, produces a graph in which any set of size 1 is a power dominating set. Thus, by Observation \ref{obs_subset} and Proposition \ref{prop_closed_form1}, all such graphs have the same power domination polynomial as $C_n$. Since the number of distinct (up to isomorphism) ways to add a chord to $C_n$ is $\lfloor n/2\rfloor-1$, this yields arbitrarily large sets of graphs with the same power domination polynomial.
\item Let $G$ be the graph obtained by connecting two cycles by a single edge $e$. $S$ is a power dominating set of $G$ if and only if it contains at least one vertex from each cycle; the same is true for $G-e$. Thus, $\mathcal{P}(G;x)=\mathcal{P}(G-e;x)$.
\item By Proposition \ref{prop_closed_form1}, $P_n$, $C_n$, and $K_n$ all have the same power domination polynomial.\qed
\end{enumerate}

\section{Roots of power domination polynomials}
\label{section_roots}

We define a \emph{power domination root} of a graph $G$ to be a root of $\mathcal{P}(G;x)$. In this section, we study various properties of power domination roots. We begin with the following basic facts.

\begin{proposition}
\label{thm_roots}
Let $G$ be a graph. Then:
\begin{enumerate}
	\item Zero is a power domination root of $G$ of multiplicity $\gamma_P(G)$.
	\item $G$ cannot have positive power domination roots.
	\item $G$ may have complex power domination roots.
	\item If $r$ is a real rational power domination root of $G$, then $r$ is an integer.
\end{enumerate}
\end{proposition}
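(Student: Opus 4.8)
The plan is to build everything on two elementary features of $\mathcal{P}(G;x)$ that I would record first: its coefficients $p(G;i)$ are nonnegative integers, and it is monic of degree $n$ because $p(G;n)=1$ by Corollary~\ref{cor_degrees}. I would also note that, by definition, $\gamma_P(G)$ is the least index $i$ with $p(G;i)\neq 0$.

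For statements~1 and~2, both reduce to examining $\mathcal{P}(G;x)$ on the real line. For~1, set $m=\gamma_P(G)$ and factor out the lowest power of $x$, writing $\mathcal{P}(G;x)=x^{m}Q(x)$ with $Q(x)=\sum_{i=m}^{n}p(G;i)x^{i-m}$; then $Q(0)=p(G;m)\neq 0$, so $0$ is a root of multiplicity exactly $m=\gamma_P(G)$. For~2, I would evaluate at any $x>0$: every term $p(G;i)x^{i}$ is nonnegative and the term $p(G;n)x^{n}=x^{n}$ is strictly positive, so $\mathcal{P}(G;x)>0$ and there can be no positive root.

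For statement~3 it suffices to exhibit a single example, and the closed forms of Proposition~\ref{prop_closed_form1} make this immediate: since $\mathcal{P}(P_3;x)=(x+1)^3-1=x(x^2+3x+3)$, the nonzero roots are $\tfrac{-3\pm\sqrt{-3}}{2}$, which are non-real. (More generally, the roots of $\mathcal{P}(K_n;x)=(x+1)^n-1$ are $e^{2\pi i k/n}-1$ for $0\le k\le n-1$, which are complex once $n\ge 3$.) For statement~4, I would apply the rational root theorem to the monic integer polynomial $\mathcal{P}(G;x)$: a rational root written in lowest terms as $p/q$ must have $q$ dividing the leading coefficient $p(G;n)=1$, forcing $q=\pm 1$ and hence $r=p/q\in\mathbb{Z}$.

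Honestly I expect no serious obstacle here; each part is a short consequence of a standard fact. The only step involving any genuine choice is the example in~3, and the one subtlety worth flagging is that both~1 and~4 hinge on monicity, i.e. on $p(G;n)=1$, which is precisely the content of Corollary~\ref{cor_degrees} that I would cite.
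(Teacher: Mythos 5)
Your proposal is correct and follows essentially the same route as the paper: factor out the lowest power of $x$ for part~1, an elementary positivity argument on $(0,\infty)$ for part~2, an explicit example from Proposition~\ref{prop_closed_form1} for part~3 (you use $\mathcal{P}(P_3;x)=\mathcal{P}(K_3;x)$ where the paper uses $K_4$, but both come from the same family $(x+1)^n-1$), and the Rational Root Theorem with monicity for part~4. The only cosmetic difference is in part~2, where you evaluate $\mathcal{P}(G;x)$ directly at $x>0$ while the paper argues via the derivative that the polynomial is strictly increasing; both are immediate.
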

\proof
\begin{enumerate}
\item[]
\item This follows from the fact that the first nonzero coefficient of $\mathcal{P}(G;x)$ corresponds to $x^{\gamma_P(G)}$.
	\item $\mathcal{P}(G;x)$ is a non-constant polynomial with nonnegative coefficients, and hence the derivative of $\mathcal{P}(G;x)$ is a nonzero polynomial with nonnegative coefficients. Thus $\mathcal{P}(G;x)$ is strictly increasing on $(0,\infty)$, and $\mathcal{P}(G;0)=0$, so it cannot have positive roots.
\item By Proposition \ref{prop_closed_form1}, $K_n$, $n \geq 3$, has complex power domination roots, since there exist complex $n^\text{th}$ roots of unity for $n\geq 3$. For example, $\mathcal{P}(K_4;x)=x^4+4x^3 + 6x^2 + 4x$ has complex roots $-1\pm i$.
	\item This follows from the Rational Root Theorem and the fact that $p(G;n)=1$.\qed

\end{enumerate}

\vspace{9pt}
\noindent We will now characterize graphs having a small number of distinct power domination roots. The next observation follows immediately from Propositions \ref{prop_closed_form1} and \ref{thm_roots}.

\begin{observation}
\label{obs_roots3}
A graph $G$ has exactly one distinct power domination root if and only if $G\simeq \overline{K_n}$.
\end{observation}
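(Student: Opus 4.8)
The plan is to prove the biconditional in Observation \ref{obs_roots3} by analyzing what it means for $\mathcal{P}(G;x)$ to have exactly one distinct root. First I would establish the forward-trivial direction: if $G\simeq \overline{K_n}$, then by Proposition \ref{prop_closed_form1} we have $\mathcal{P}(G;x)=x^n$, whose only root is $0$; hence $G$ has exactly one distinct power domination root. This is immediate and requires no further argument.

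For the substantive direction, suppose $G$ has exactly one distinct power domination root. By part 1 of Proposition \ref{thm_roots}, zero is always a power domination root of $G$ (of multiplicity $\gamma_P(G)\geq 1$), so the unique root must be $0$. This forces $\mathcal{P}(G;x)$ to be a monomial: since $\mathcal{P}(G;x)$ has degree $n$ with leading coefficient $p(G;n)=1$ (by Corollary \ref{cor_degrees}, part 1) and $0$ as its only root, it must equal $x^n$. The key step is then to show that $\mathcal{P}(G;x)=x^n$ forces $G\simeq \overline{K_n}$. Reading off coefficients, $\mathcal{P}(G;x)=x^n$ means $p(G;i)=0$ for all $i<n$ and $p(G;n)=1$; equivalently, the only power dominating set of $G$ is $V$ itself.

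To conclude, I would argue the contrapositive at the coefficient level, which is cleanest. If $G\not\simeq\overline{K_n}$, then $G$ has at least one non-isolated vertex, so the number of isolates $I$ satisfies $I<n$; by Corollary \ref{cor_degrees}, part 2, $p(G;n-1)=n-I>0$. Thus $\mathcal{P}(G;x)$ has a nonzero coefficient at $x^{n-1}$, so it is not the monomial $x^n$ and therefore has a root other than $0$ (or more carefully, is not of the form $x^n$); in any case $\mathcal{P}(G;x)\neq x^n$, contradicting the conclusion of the previous paragraph. Hence $G\simeq\overline{K_n}$.

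I expect the main obstacle to be articulating precisely why ``exactly one distinct root'' forces the polynomial to be $x^n$ rather than merely having $0$ as a repeated root among possibly other coefficients — the point being that any monic degree-$n$ polynomial with $0$ as its only complex root must equal $x^n$ by the fundamental theorem of algebra (factoring over $\mathbb{C}$). Once this is pinned down, the graph-theoretic translation via Corollary \ref{cor_degrees} is routine, so the real content is simply invoking the correct earlier results in the right sequence.
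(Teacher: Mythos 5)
Your proof is correct and follows essentially the same route the paper intends: the paper states the observation as following immediately from Propositions \ref{prop_closed_form1} and \ref{thm_roots}, i.e., $\mathcal{P}(\overline{K_n};x)=x^n$ gives one direction, and for the converse the fact that $0$ is always a root forces the unique root to be $0$, whence $\mathcal{P}(G;x)=x^n$ by monicity and the fundamental theorem of algebra. Your use of Corollary \ref{cor_degrees} (namely $p(G;n-1)=n-I$) to conclude $G\simeq\overline{K_n}$ is a perfectly valid way to finish this last step, equivalent to the paper's implicit appeal to the multiplicity statement $\gamma_P(G)=n$.
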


\begin{definition}[\cite{powerdom2}]
\label{def_zhao}
Let $\mathscr{F}$ be the family of graphs obtained from connected graphs $H$ by adding two new vertices $v'$ and $v''$, two new edges $vv'$ and $vv''$, and possibly the edge $v'v''$, for each vertex $v$ of $H$.	
\end{definition}

\begin{theorem}[\cite{powerdom2}]
\label{thm_zhao}
If $G=(V,E)$ is a connected graph of order $n\ge3$, then $\gamma_P(G)\le n/3$ with equality if and only if $G\in\mathscr{F}\cup\{K_{3,3}\}$.
\end{theorem}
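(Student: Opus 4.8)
The plan is to establish the inequality $\gamma_P(G)\le n/3$ by induction on $n$, and then to read off the equality cases by tracking where the inductive bound is tight; for the sufficiency half of the characterization I would argue directly using the fort description of power dominating sets from Proposition \ref{prop_forts_known}.

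For the upper bound I would induct on $n$, taking as base cases $3\le n\le 5$, where one checks directly that every connected graph satisfies $\gamma_P(G)=1\le n/3$ (a single suitably chosen vertex observes everything). For $n\ge 6$ the aim is to locate a reducible triple: a vertex $v$ together with two further vertices $a,b$ that $v$ can be made to observe, chosen so that $G':=G-\{v,a,b\}$ is connected and of order $n-3\ge 3$. Applying the inductive hypothesis to $G'$ produces a power dominating set $S'$ with $|S'|\le (n-3)/3$, and one then checks that $S'\cup\{v\}$ power dominates $G$: once $S'$ observes all of $G'$ and $v$ dominates its neighbors, the forcing process extends to color $a$ and $b$, giving $\gamma_P(G)\le 1+(n-3)/3=n/3$. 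Guaranteeing that such a reducible configuration exists in every large connected graph, handling the cases where removing three vertices disconnects $G$, and verifying that forcing genuinely propagates through the removed vertices, is the technical core and would be organized as a case analysis on local structure.

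For sufficiency of the equality characterization I would verify both families directly. If $G\in\mathscr{F}$ arises from a connected graph $H$ on $k$ vertices, then $n=3k$, and for each $v\in V(H)$ the attached pair $F_v=\{v',v''\}$ is a fort of $G$: the only vertex outside $F_v$ adjacent to $v'$ or $v''$ is $v$ itself, which is adjacent to both, so no external vertex has exactly one neighbor in $F_v$ (this holds whether or not the edge $v'v''$ is present). Hence $N[F_v]=\{v,v',v''\}$, and these $k$ closed neighborhoods are pairwise disjoint, so by Proposition \ref{prop_forts_known} any power dominating set must contain at least one vertex of each; thus $\gamma_P(G)\ge k=n/3$, and with the upper bound this gives equality. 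For $K_{3,3}$ one checks that no single vertex observes the graph (after dominating the opposite part, every colored vertex still has two uncolored neighbors, so forcing halts), whence $\gamma_P(K_{3,3})=2=n/3$; note that $K_{3,3}$ is $3$-regular, hence has no vertices of degree at most $2$, whereas every member of $\mathscr{F}$ contains such attached vertices, confirming that $K_{3,3}$ is a genuinely separate extremal graph.

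The hard part will be the necessity direction: proving that $\gamma_P(G)=n/3$ (which in particular forces $3\mid n$) implies $G\in\mathscr{F}\cup\{K_{3,3}\}$. Here I would re-run the inductive upper-bound argument under the hypothesis of equality and exploit that every reduction must be perfectly tight, so that each removal of three vertices lowers $\gamma_P$ by exactly one with no slack in the forcing anywhere. I expect this to force the local structure around each selected vertex to be exactly an $\mathscr{F}$-gadget (a vertex carrying two attached vertices that form a path or a triangle), while the unique extremal configuration that is not locally of this form collapses globally to $K_{3,3}$. Controlling connectivity throughout the reductions and systematically excluding every other tight configuration is the main obstacle, and is where the bulk of the effort of the theorem resides.
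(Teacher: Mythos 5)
This statement is quoted by the paper from Zhao, Kang, and Chang \cite{powerdom2}; the paper itself contains no proof of it, so your outline can only be judged on its own terms, and on those terms it has genuine gaps. What you do prove is the easy half: your fort argument for sufficiency is correct and complete (for $G\in\mathscr{F}$ the pairs $\{v',v''\}$ are forts whose closed neighborhoods $\{v,v',v''\}$ are pairwise disjoint, so Proposition \ref{prop_forts_known} forces $\gamma_P(G)\ge n/3$, and your $K_{3,3}$ computation is right). Your lifting step in the induction is also sound, but only under the stronger hypothesis $a,b\in N(v)$: then $v,a,b$ are colored in the domination step and every force performed by $S'$ inside $G'$ remains valid in $G$. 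Your looser phrasing ``two further vertices $a,b$ that $v$ can be made to observe'' does not work: if $a$ is meant to be colored by a \emph{later} force, then while $a$ is still uncolored it blocks every force by its neighbors in $G'$, and power domination is not monotone under vertex deletion (delete the center of a spider with three legs of length two and $\gamma_P$ jumps from $1$ to $3$).

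The two deferred items are not technical residue; they are the theorem. First, a triple $v,a,b$ with $a,b\in N(v)$ and $G-\{v,a,b\}$ connected of order at least $3$ simply need not exist: in the spider with center $c$ and legs $c x_i y_i$ ($i=1,2,3$), every admissible triple leaves components of order at most $2$, where the inductive bound is false ($\gamma_P(K_1)=1>1/3$, $\gamma_P(K_2)=1>2/3$), so the disconnected case cannot be absorbed by summing the bound over components. Trees are exactly where this happens, and the bound $\gamma_P(T)\le n/3$ for trees is itself a nontrivial induction in the literature; your plan gives no mechanism for it. Second, the necessity direction of the equality characterization is pure expectation (``I expect this to force the local structure\dots''): nothing in the proposal shows that tightness of the induction yields the $\mathscr{F}$-gadgets, and the sporadic extremal graph $K_{3,3}$ — which any correct tightness analysis must \emph{discover} rather than assume, since it fits no local gadget pattern — is evidence that a purely local argument cannot close this. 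As written, the proposal establishes only the sufficiency of the equality cases and leaves both the inequality $\gamma_P(G)\le n/3$ and the necessity direction unproven.
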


\begin{theorem}
\label{thm_two_roots}
A graph $G$ has exactly two distinct power domination roots if and only if $G \simeq G_1\dot\cup\ldots\dot\cup G_k\dot\cup\overline{K_r}$, where $k\geq 1$, $r\geq 0$, and $G_i\simeq P_2$ for $1\leq i\leq k$. Moreover, if $G$ has exactly two power domination roots, these roots are $0$ and $-2$.
\end{theorem}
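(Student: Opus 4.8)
The plan is to prove both implications, with the sufficiency direction reducing to a one-line computation and the necessity direction carrying the real work, including the "moreover" clause.

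For the "if" direction I would just compute the polynomial. By Proposition \ref{prop_closed_form1}, $\mathcal{P}(P_2;x)=(x+1)^2-1=x(x+2)$ and $\mathcal{P}(\overline{K_r};x)=x^r$, so applying Proposition \ref{thm_disjoint_union} to $G\simeq G_1\dot\cup\cdots\dot\cup G_k\dot\cup\overline{K_r}$ with each $G_i\simeq P_2$ gives $\mathcal{P}(G;x)=x^{k+r}(x+2)^k$. Since $k\geq 1$, both $0$ and $-2$ genuinely occur, so $G$ has exactly the two distinct roots $0$ and $-2$; this simultaneously settles the "moreover" statement on this side.

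For the converse, suppose $G$ has exactly two distinct power domination roots. By Proposition \ref{thm_roots} part $1$, $0$ is always a root; call the other distinct root $\alpha\neq 0$. First I would observe that $\alpha$ is real and negative: a non-real $\alpha$ would force its complex conjugate to also be a root, producing a third distinct root, and by Proposition \ref{thm_roots} part $2$ there are no positive roots. Then I would pass to the connected components $H_1,\ldots,H_m$, so that $\mathcal{P}(G;x)=\prod_{j}\mathcal{P}(H_j;x)$. Each factor is monic (its top coefficient is $p(H_j;n_j)=1$), has $0$ as a root (as $\gamma_P(H_j)\geq 1$), and has all of its roots contained in $\{0,\alpha\}$; hence $\mathcal{P}(H_j;x)=x^{a_j}(x-\alpha)^{b_j}$ with $a_j=\gamma_P(H_j)\geq 1$ and $a_j+b_j=n_j$.

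The crux is ruling out "large" components. A component with $b_j=0$ has polynomial $x^{n_j}$, so by Observation \ref{obs_roots3} it is $\overline{K_{n_j}}$, and being connected it must be $K_1$. For a component $H$ with $b:=b_j\geq 1$, connectivity forces $n:=n_j\geq 2$; the main obstacle is excluding $n\geq 3$, which I would do by matching two coefficients. Writing $\beta:=-\alpha>0$ and comparing the $x^{n-1}$ and $x^{n-2}$ coefficients of $x^{a}(x+\beta)^{b}$ with the values $p(H;n-1)=n$ and $p(H;n-2)=\binom{n}{2}$ guaranteed by Corollary \ref{cor_degrees} part $4$ (valid precisely for connected graphs on at least three vertices), I obtain $b\beta=n$ and $\binom{b}{2}\beta^2=\binom{n}{2}$; substituting the first relation into the second forces $\beta=1$, whence $b=n$ and $a=0$, contradicting $a\geq 1$. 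Thus every component has at most two vertices: one-vertex components are copies of $K_1$, and each connected two-vertex component is $P_2$, whose polynomial $x(x+2)$ pins down $\alpha=-2$. Collecting the isolated vertices into $\overline{K_r}$ yields $G\simeq G_1\dot\cup\cdots\dot\cup G_k\dot\cup\overline{K_r}$ with each $G_i\simeq P_2$, where $k\geq 1$ (since $\alpha$ must actually be realized as a root) and $r\geq 0$, and the two roots are $0$ and $-2$. Everything outside the coefficient-matching step is routine bookkeeping with the disjoint-union factorization.
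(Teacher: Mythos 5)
Your proof is correct, and its necessity half takes a genuinely different route from the paper's. Both proofs handle the ``if'' direction by the same computation, and both reduce the converse to showing that a connected component on $n\geq 3$ vertices cannot have polynomial $x^{a}(x+\beta)^{b}$ with $a\geq 1$ (using conjugate pairs and the absence of positive roots to get this form). At that point the paper matches only the coefficient of $x^{n-1}$, obtaining $b\beta=n$; it then needs Proposition \ref{thm_roots}(4) (rationality forces $\beta$ to be an integer, hence $\beta\geq 2$ since $a\geq 1$) and Zhao's bound $\gamma_P\leq n/3$ (Theorem \ref{thm_zhao}) to force $\beta\leq 3/2$, a contradiction. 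You instead match two coefficients, $x^{n-1}$ and $x^{n-2}$, both supplied by Corollary \ref{cor_degrees}; the system $b\beta=n$, $\binom{b}{2}\beta^2=\binom{n}{2}$ collapses by pure algebra to $b=n$, $\beta=1$, $a=0$, contradicting $\gamma_P(H)\geq 1$. Your route buys elementarity: it avoids both Zhao's theorem and the Rational Root Theorem step entirely, at the cost of one extra coefficient computation, and it is closer in spirit to the paper's own proof of the three-root case (Theorem \ref{thm_3_roots}), which sets up exactly these two coefficient equations --- there Zhao's theorem is genuinely needed to recognize the family $\mathscr{F}$, whereas your argument shows it is dispensable for the two-root case. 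One cosmetic slip: for a component with $b_j\geq 1$, it is the condition $b_j\geq 1$ itself, not connectivity, that forces $n_j\geq 2$ (a single vertex has polynomial $x$, hence $b_j=0$); this does not affect the argument.
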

\proof
If $G \simeq G_1\dot\cup\ldots\dot\cup G_k\dot\cup\overline{K_r}$, where $k\geq 1$, $r\geq 0$, and $G_i\simeq P_2$ for $1\leq i\leq k$, then by Proposition \ref{thm_disjoint_union} and Proposition \ref{prop_closed_form1}, $\mathcal{P}(G;x)=x^r(x^2+2x)^k$; thus, $G$ has exactly two distinct power domination roots: $0$ and $-2$. 

Now, suppose that $G$ has exactly two distinct power domination roots. Let $G'$ be the largest component of $G$ (by number of vertices), and let $n'=|V(G')|$. Clearly  $n'>1$, since otherwise $G\simeq \overline{K_n}$ and $G$ would have only one power domination root by Observation \ref{obs_roots3}.

Suppose for contradiction that $n'\ge3$. Since $0$ is a power domination root of $G'$ of multiplicity $\gamma_P(G')$, we have that $\mathcal{P}(G';x)=x^p(x+a)^{n'-p}$ for some $p\in[n'-1]$ and $a>0$ (since imaginary roots appear in complex conjugate pairs, and there cannot be any positive roots by Proposition \ref{thm_roots}). Note that the coefficient of $x^{n'-1}$ in $x^p(x+a)^{n'-p}$ is $(n'-p)a$. Since $G'$ is connected and has more than one vertex, it has no isolates, so it follows from Corollary \ref{cor_degrees} that $(n'-p)a=n'$. Since $n'$ and $p$ are integers, $a$ is rational, and by Proposition \ref{thm_roots}, $a$ is an integer. Moreover, since $p>0$, we have that $a\ge2$. Since the lowest-order term in $\mathcal{P}(G';x)$ involves $x^p$, we have that $\gamma_P(G')=p$. By Theorem \ref{thm_zhao}, $p=\frac{n'(a-1)}{a}\le \frac{n'}{3}$, which implies that $a\le3/2$, a contradiction. Thus it cannot hold that $n'\geq 3$, so $n'=2$. Then, $G \simeq G_1\dot\cup\ldots\dot\cup G_k\dot\cup\overline{K_r}$, for some $k\geq 1$, $r\geq 0$, and $G_i\simeq P_2$ for $1\leq i\leq k$.
\qed

\vspace{9pt}

\begin{theorem}
\label{thm_3_roots}
A graph $G$ has exactly three distinct power domination roots if and only if $G \simeq G_1\dot\cup\ldots\dot\cup G_k\dot\cup\overline{K_r}$, where $k\geq 1$, $r\geq 0$, and $G_i\in \mathscr{F}$ for $1\leq i\leq k$. Moreover, if $G$ has exactly three power domination roots, these roots are $0$, $\frac{-3+\sqrt3 i}{2}$, and $\frac{-3-\sqrt 3 i}{2}$.
\end{theorem}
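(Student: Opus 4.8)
The plan is to prove both implications, using Theorem \ref{theorem_corona} for sufficiency and Corollary \ref{cor_degrees} together with Theorem \ref{thm_zhao} for necessity, in close analogy with the proof of Theorem \ref{thm_two_roots}.

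For the forward direction, suppose $G \simeq G_1\dot\cup\cdots\dot\cup G_k\dot\cup\overline{K_r}$ with each $G_i\in\mathscr{F}$. I would first observe that a graph in $\mathscr{F}$ built from a connected graph $H$ on $m$ vertices is exactly the graph obtained by identifying, at each vertex of $H$, either the center of a $P_3$-gadget or a vertex of a $K_3$-gadget with that vertex. Each gadget has order $3$, every single vertex of $P_3$ or $K_3$ is a power dominating set, and in the $P_3$ case the identified vertex is a non-endpoint, so Theorem \ref{theorem_corona} applies and gives $\mathcal{P}(G_i;x)=((x+1)^3-1)^{m_i}$. Then Proposition \ref{thm_disjoint_union} and Corollary \ref{cor_isolate} yield $\mathcal{P}(G;x)=x^r((x+1)^3-1)^{M}$ with $M=\sum_i m_i\ge k\ge1$. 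Since $(x+1)^3-1=x(x^2+3x+3)$ vanishes exactly at $0$ and $\frac{-3\pm\sqrt3 i}{2}$, and $M\ge1$ guarantees the quadratic factor is present, $\mathcal{P}(G;x)$ has exactly these three distinct roots.

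For the converse I would argue one component at a time. By Proposition \ref{thm_disjoint_union} the distinct roots of $\mathcal{P}(G;x)$ are the union over components of their distinct roots, and each component contributes the root $0$; in particular every component has at most two nonzero roots. Fix a component $C$ with $n'=|V(C)|\ge3$; it is connected with no isolates, so by Corollary \ref{cor_degrees} the coefficients of $x^{n'-1}$ and $x^{n'-2}$ in $\mathcal{P}(C;x)$ are $n'$ and $\binom{n'}{2}$. Writing $e_1,e_2$ for the elementary symmetric functions and $P_1,P_2$ for the power sums of the $n'$ roots, this gives $P_1=e_1=-n'$ and $P_2=e_1^2-2e_2=n'$, hence the key identity $\sum_\rho \rho(\rho+1)=P_2+P_1=0$, summed over all roots with multiplicity. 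Moreover $\mathcal{P}(C;x)/x^{\gamma_P(C)}$ is a monic integer polynomial whose only distinct roots are the nonzero roots of $C$, so by Gauss's lemma each nonzero root either is a negative integer or lies in a conjugate pair whose minimal polynomial $x^2+cx+d$ has $c,d\in\mathbb{Z}$ and $d>0$.

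I would then dispatch the cases with the identity. If the nonzero roots are all negative integers $-a$, then $\sum_\rho \rho(\rho+1)=\sum_\rho a(a-1)=0$ forces every $a=1$, so $\mathcal{P}(C;x)=x^{\gamma_P(C)}(x+1)^{n'-\gamma_P(C)}$; matching the coefficient of $x^{n'-1}$ then gives $\gamma_P(C)=0$, a contradiction. Otherwise the nonzero roots form a conjugate pair with minimal polynomial $x^2+cx+d$, of multiplicity $t$; here $tc=n'$, and the identity gives $c^2-2d-c=0$, i.e.\ $d=c(c-1)/2$, so the discriminant is $c^2-4d=-c(c-2)$. A real pair would require $c^2-4d>0$, forcing the positive integer $c$ to equal $1$ and hence $d=0$, contradicting $d>0$; thus the pair is complex and $c^2-4d<0$ gives $c\ge3$. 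On the other hand $c=2n'/(n'-\gamma_P(C))$ together with $\gamma_P(C)\le n'/3$ from Theorem \ref{thm_zhao} forces $c\le3$, so $c=3$, $d=3$, and $C$ has roots $0,\frac{-3\pm\sqrt3 i}{2}$ with $\gamma_P(C)=n'/3$. Theorem \ref{thm_zhao} then gives $C\in\mathscr{F}\cup\{K_{3,3}\}$, and a direct computation (via Theorem \ref{thm_joins}) gives $\mathcal{P}(K_{3,3};x)=x^2(x^4+6x^3+15x^2+20x+15)$, whose quartic factor is not $(x^2+3x+3)^2$, so $K_{3,3}$ does not have exactly the roots $0,\frac{-3\pm\sqrt3 i}{2}$; hence $C\in\mathscr{F}$.

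Finally I would assemble the components. Any component of order at most $2$ is either $\overline{K_1}$ (root $0$ only) or $P_2$ (roots $0,-2$). A $P_2$-component would contribute a fourth distinct root $-2$, so none can occur; and if no component had order $\ge3$, then $G$ would have at most two distinct roots, contradicting the hypothesis. Therefore $G$ has at least one $\mathscr{F}$-component, no $P_2$-components, and all other components are isolated vertices, giving $G \simeq G_1\dot\cup\cdots\dot\cup G_k\dot\cup\overline{K_r}$ with $k\ge1$ and each $G_i\in\mathscr{F}$, and its roots are $0,\frac{-3\pm\sqrt3 i}{2}$. The main obstacle is pinning the quadratic factor down to $x^2+3x+3$: this is exactly the step where the integrality from Gauss's lemma, the symmetric-function identity $\sum_\rho\rho(\rho+1)=0$, and the extremal bound $\gamma_P\le n'/3$ of Theorem \ref{thm_zhao} must be combined, since no finitely many leading coefficients alone determine $c$ and $d$.
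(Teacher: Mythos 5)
Your proof is correct and follows essentially the same route as the paper's: sufficiency via Theorem \ref{theorem_corona} with Proposition \ref{thm_disjoint_union}, and necessity by combining the coefficients of $x^{n'-1}$ and $x^{n'-2}$ from Corollary \ref{cor_degrees} with a minimal-polynomial dichotomy for the nonzero roots of each large component, Theorem \ref{thm_zhao} (including its equality case, with $K_{3,3}$ excluded by direct computation), and a final assembly ruling out $K_2$-components. The only real difference is cosmetic: you package the paper's two coefficient equations into the power-sum identity $\sum_\rho \rho(\rho+1)=0$, which streamlines both the all-integer-roots case and the determination of the quadratic factor $x^2+3x+3$, whereas the paper manipulates the two equations directly; the arguments are otherwise step-for-step parallel (and your forward direction, with exponent $M=\sum_i m_i$ rather than $k$, is in fact slightly more precise than the paper's).
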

\proof
If $G \simeq G_1\dot\cup\ldots\dot\cup G_k\dot\cup\overline{K_r}$, where $k\geq 1$, $r\geq 0$, and $G_i\in \mathscr{F}$ for $1\leq i\leq k$, then $G_1\dot\cup\ldots\dot\cup G_k$ can be viewed as a graph obtained from a graph $H$ by identifying a vertex of $K_3$ or a non-endpoint vertex of $P_3$ to each vertex of $H$. Thus, $G_1\dot\cup\ldots\dot\cup G_k$ satisfies the conditions of Theorem \ref{theorem_corona}; then, by Proposition \ref{thm_disjoint_union} and Proposition \ref{prop_closed_form1}, $\mathcal{P}(G;x)=x^r(x^3+3x^2+3x)^k$, and therefore $G$ has exactly three distinct power domination roots: $0$, $\frac{-3+\sqrt3 i}{2}$, and $\frac{-3-\sqrt 3 i}{2}$.

Now suppose that $G$ has exactly three distinct power domination roots. If all components of $G$ have at most 2 vertices, then by Theorem \ref{thm_two_roots}, $G$ has at most two distinct power domination roots. Thus, let $G'$ be a component of $G$ with $n'\geq 3$ vertices. By Observation \ref{obs_roots3} and Theorem \ref{thm_two_roots}, $G'$ cannot have fewer than three distinct power domination roots. Thus, $G'$ must have exactly 3 distinct power domination roots, so $\mathcal{P}(G';x)=x^j(x-a)^k(x-b)^l$ for some $j,k,l\in\mathbb{N}$ with $j+k+l=n'$ and some $a,b\in\mathbb{C}\backslash \{0\}$ with $a\not=b$. Using the coefficient of $x^{n'-2}$ in $\mathcal{P}(G';x)$ and Corollary \ref{cor_degrees}, we have that
\begin{equation}
\label{eqn1}
\binom{k}{2}a^2+\binom{l}{2}b^2+klab=p(G';n'-2)=\binom{n'}{2}.
\end{equation}
Using the coefficient of $x^{n'-1}$ in $\mathcal{P}(G';x)$, Corollary \ref{cor_degrees}, and the fact that $G'$ has no isolates, we also have that
\begin{equation}
\label{eqn2}
-(ka+lb)=p(G';n'-1)=n'.
\end{equation}
Because $\mathcal{P}(G';x)$ is a monic polynomial with integer coefficients, $a$ is an algebraic integer and hence its minimal polynomial $A(x)$ over $\mathbb{Q}$ is a monic irreducible polynomial with integer coefficients. Note that by minimality, $A(x)$ cannot have $0$ as a root. Since $A(x)$ divides $\mathcal{P}(G';x)$, since irreducible polynomials with rational coefficients are separable, and since $\mathcal{P}(G';x)$ has three distinct roots, we have that either $A(x)=x-a$ or $A(x)=x^2+rx+s$ for some $r,s\in\mathbb{Z}$. Thus, we consider two cases:
\begin{enumerate}
\item Suppose that $A(x)=x^2+rx+s$. Since $a$, $b$, and $0$ are roots of $\mathcal{P}(G';x)$, and $A(x)$ divides $\mathcal{P}(G';x)$, and $0$ is not a root of $A(x)$, $b$ must be a root of $A(x)$. Thus, $A(x)$ must also be the minimal polynomial of $b$ over $\mathbb{Q}$. Thus $\mathcal{P}(G';x)=x^j(x^2+rx+s)^k$, which implies that $j+2k=n'$. By Theorem \ref{thm_zhao}, we have that $j=\gamma_P(G')\le n'/3$, which implies that $k\ge n'/3$. Note that $a+b\in\mathbb{Z}$, since $A(x)$ is monic and $r\in\mathbb{Z}$. Since $-k(a+b)=n'>0$ by (\ref{eqn2}), this implies that $-(a+b)\in\{1,2,3\}$. We consider three subcases:
\begin{enumerate}
\item If $a+b=-1$, then $k=n'$ and hence $j=-n'$, a contradiction.
\item If $a+b=-2$, then $k=n'/2$ and hence $j=0$, a contradiction. 
\item If $a+b=-3$, then $\gamma_P(G)=j=k=n'/3$. By Theorem \ref{thm_zhao}, $G'\in\mathscr{F}$, since $\mathcal{P}(K_{3,3};x)=x^6+6x^5+15x^4+20x^3+15x^2$, which has five distinct roots.
\end{enumerate}
\item Suppose that $A(x)=x-a$; then, the minimal polynomial of $b$ over $\mathbb{Q}$ must be $x-b$, and $-a,-b\in\mathbb{N}$ by Proposition \ref{thm_roots}. Since $n'\ge3$, by Theorem \ref{thm_zhao} we have $j\le n'/3$, which implies that $k+l\ge2n'/3>n'/2$. If $a\le-2$ and $b\le-2$, then $-(ka+lb)>n'$, a contradiction to (\ref{eqn2}). Therefore without loss of generality, we can assume that $a=-1$. Then by (\ref{eqn2}), $k-lb=n'$. On the other hand, by (\ref{eqn1}), $\binom{k}{2}+\binom{l}{2}b^2-klb=\binom{n'}{2}$. Thus $k^2-k+(l^2-l)b^2-2klb=n'^2-n'$. This implies that $(k-lb)^2-k-lb^2=n'^2-n'$. Substituting $n'$ for $k-lb$, we get $k+lb^2=n'$, but since $k-lb=n'$, we have that $b=0$ or $b=-1$. But then $\mathcal{P}(G';x)$ would only have two distinct roots: $-1$ and $0$; this is a contradiction.
\end{enumerate}
Thus, whenever a component $G'$ of $G$ has at least 3 vertices, $G'\in\mathscr{F}$. Suppose $G$ also has a $K_2$-component. Then $G$ would have $-2$ as a root. However, since $G$ must have a component $G'$ with at least 3 vertices, $G'$ must be in $\mathscr{F}$, and hence by Theorem \ref{theorem_corona}, $G'$ has power domination roots $0$, $\frac{-3+\sqrt3 i}{2}$, and $\frac{-3-\sqrt 3 i}{2}$; by Proposition \ref{thm_disjoint_union}, it follows that $G$ would have at least 4 distinct power domination roots, a contradiction. Thus, all components  of $G$ are either isolates or graphs in $\mathscr{F}$.
\qed

\vspace{9pt}

\noindent Note that it can be verified in polynomial time whether a graph $G$ is isomorphic to $G_1\dot\cup\ldots\dot\cup G_k\dot\cup\overline{K_r}$ for some $k\geq 1$, $r\geq 0$, and $G_i\in \mathscr{F}$ for $1\leq i\leq k$.

\vspace{9pt}

\noindent We now identify regions of the complex plane in which no power domination roots can exist.

\begin{theorem}[Rouch\'{e}'s Theorem]
\label{thm_Rouche}
Let $f$, $g$, and $h$ be analytic functions on a region $\Omega$, and let $C$ be a simple closed connected curve (i.e., a closed curve that does not intersect itself) in $\Omega$. If $f(z)=g(z)+h(z)$ in $\Omega$ and $|h(z)|>|g(z)|$ on $C$, then on the set enclosed by $C$, $f$ and $h$ have the same number of roots (counting repeated roots multiple times).
\end{theorem}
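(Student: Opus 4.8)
The statement is the classical Rouch\'e theorem, and the plan is to prove it via the \emph{argument principle} together with a homotopy. The key preliminary observation is that neither $h$ nor $f$ vanishes on $C$: since $\abs{h(z)}>\abs{g(z)}\geq 0$ on $C$ we have $h(z)\neq 0$ there, and since $\abs{f(z)}=\abs{h(z)+g(z)}\geq \abs{h(z)}-\abs{g(z)}>0$ on $C$ we have $f(z)\neq 0$ there as well; this guarantees that the relevant contour integrals are well defined. I would take as known the argument principle: for a function $F$ analytic on the set enclosed by $C$ and nonvanishing on $C$, the number $N_F$ of zeros of $F$ enclosed by $C$ (counted with multiplicity) equals $\frac{1}{2\pi i}\oint_C \frac{F'(z)}{F(z)}\,dz$, which is the winding number of $F$ about the origin along $C$. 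Note that analyticity on the enclosed region is already guaranteed by the hypothesis that $f,g,h$ are analytic on $\Omega$ and that $C$ lies in $\Omega$.

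First I would introduce the homotopy $F_t(z)=h(z)+t\,g(z)$ for $t\in[0,1]$, so that $F_0=h$ and $F_1=f$. On $C$ we have $\abs{t\,g(z)}\leq \abs{g(z)}<\abs{h(z)}$, whence $\abs{F_t(z)}\geq \abs{h(z)}-\abs{t\,g(z)}>0$; thus $F_t$ is zero-free on $C$ for every $t$, and each $F_t$ is analytic on the enclosed region. I then define $N(t)=\frac{1}{2\pi i}\oint_C \frac{F_t'(z)}{F_t(z)}\,dz$, which by the argument principle equals the number of zeros of $F_t$ enclosed by $C$ and is therefore a nonnegative integer for each $t$.

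The crux is to show $N(t)$ is constant. Since $C\times[0,1]$ is compact and $F_t(z)$ is jointly continuous and bounded away from zero on it, the integrand $F_t'(z)/F_t(z)$ is jointly continuous, so $N(t)$ depends continuously on $t$ (one may pass the limit under the integral sign by uniform continuity). Being both continuous and integer-valued on the connected interval $[0,1]$, $N$ must be constant. Hence $N(0)=N(1)$, i.e.\ $h=F_0$ and $f=F_1$ enclose the same number of zeros within $C$, as claimed.

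The main obstacle, and the only genuinely analytic input, is the argument principle itself together with the continuity-forces-constancy step; everything else reduces to the triangle-inequality estimate $\abs{F_t}\geq \abs{h}-\abs{g}>0$ on $C$. An alternative route that avoids the explicit homotopy is to observe that on $C$ one has $\abs{f(z)/h(z)-1}=\abs{g(z)/h(z)}<1$, so the image of $C$ under $f/h$ lies in the open disk $\abs{w-1}<1$ and thus winds zero times about the origin; applying the argument principle to the meromorphic function $f/h$ (whose enclosed zeros are those of $f$ and whose enclosed poles are those of $h$) then yields $N_f-N_h=0$ directly.
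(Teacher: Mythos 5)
The paper does not actually prove this statement: it is Rouch\'e's classical theorem, quoted as known background (and then used, via Corollary \ref{cor_Rouche}, in the proof of Theorem \ref{thm_roots_bound}), so there is no proof in the paper to compare yours against. Your argument is the standard textbook proof --- the homotopy $F_t=h+tg$, the triangle-inequality estimate keeping $F_t$ zero-free on $C$, and the argument principle forcing the integer-valued, continuous zero count $N(t)$ to be constant on $[0,1]$ --- and it is correct; the alternative you sketch, applying the argument principle to $f/h$ and noting that its image of $C$ lies in the disk $\left\vert w-1\right\vert<1$ and hence has winding number zero about the origin, is equally standard and also fine. One point to tighten: your parenthetical claim that analyticity of $f,g,h$ on $\Omega$ together with $C\subset\Omega$ ``already guarantees'' analyticity on the region enclosed by $C$ is not literally true --- $\Omega$ could be an annulus with $C$ encircling the hole, in which case the enclosed region is not contained in $\Omega$ and the argument principle does not apply. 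The theorem as stated in the paper carries the same implicit hypothesis; to make either of your arguments complete you should assume explicitly that the closed region bounded by $C$ lies in $\Omega$ (or that $\Omega$ is simply connected), after which everything you wrote goes through.
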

\begin{corollary}
\label{cor_Rouche}
Let $f$, $g$, and $h$ be polynomials such that $f(z)=g(z)+h(z)$, and on a circle of radius $r$ centered at $0$ in the complex plane, $|h(z)|>|g(z)|$. Then, on the disk of radius $r$ centered at $0$, $f$ and $h$ have the same number of roots. 
\end{corollary}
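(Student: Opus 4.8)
The plan is to apply Rouch\'e's Theorem (Theorem \ref{thm_Rouche}) directly, specializing the analytic functions to polynomials and the curve to a circle centered at the origin. First I would take the region to be the entire complex plane, $\Omega=\mathbb{C}$, and observe that every polynomial is an entire function; hence $f$, $g$, and $h$ are all analytic on $\Omega$. Next I would let $C$ be the circle of radius $r$ centered at $0$, which is a simple closed connected curve lying in $\Omega$. The two hypotheses of Theorem \ref{thm_Rouche} are then immediate from the assumptions of the corollary: the identity $f(z)=g(z)+h(z)$ holds throughout $\Omega$, and the strict inequality $\abs{h(z)}>\abs{g(z)}$ holds on $C$.

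Having verified these hypotheses, Theorem \ref{thm_Rouche} yields that $f$ and $h$ have the same number of roots, counted with multiplicity, on the set enclosed by $C$, which is exactly the disk of radius $r$ centered at $0$. This is the desired conclusion.

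The only point deserving a brief remark is the treatment of the boundary circle, to ensure the statement is unambiguous for ``the disk of radius $r$.'' I would note that neither $f$ nor $h$ can vanish on $C$: if $h(z)=0$ for some $z\in C$, then $\abs{h(z)}=0$ would contradict $\abs{h(z)}>\abs{g(z)}\geq 0$; and if $f(z)=0$ for some $z\in C$, then $g(z)=-h(z)$, so $\abs{g(z)}=\abs{h(z)}$, again contradicting the strict inequality. Thus no roots of $f$ or $h$ lie on $C$, and the count over the open disk agrees with the count over the closed disk.

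I do not anticipate any substantive obstacle, since the result is essentially a direct specialization of Theorem \ref{thm_Rouche}; the only care required is the routine observation that polynomials are entire and the short boundary argument above.
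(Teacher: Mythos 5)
Your proof is correct and takes essentially the same approach as the paper, which states this corollary without proof precisely because it is the direct specialization of Rouch\'e's Theorem (Theorem~\ref{thm_Rouche}) to entire functions and a circular contour — exactly the reduction you carried out. Your added observation that the strict inequality $|h(z)|>|g(z)|$ on the circle prevents either $f$ or $h$ from vanishing there (so the open/closed disk ambiguity is harmless) is a sound refinement that the paper leaves implicit.
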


\noindent Note that in the case that $h(z)$ is a non-zero constant, $f(z)$ has no roots on the disk in question.

\begin{theorem}
\label{thm_roots_bound}
Let $G$ be a graph. Let $a$ be a positive real number, and let 
\begin{equation*}
f(G;a)=\frac{\sum_{i=1}^n p(G;i)a^i}{\sum_{i=1}^n\sum_{k=i}^n p(G;k)\binom{k}{i}a^{k-i}}.
\end{equation*}
If $a+bi$ is a root of $\mathcal{P}(G;x)$, then $|b|\geq \min \left\{f(G;a), \left(f(G;a)\right)^\frac{1}{n}\right\}$.

\end{theorem}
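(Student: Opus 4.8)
The plan is to translate the root condition into a statement about the Taylor expansion of $\mathcal{P}(G;x)$ about the point $x=a$, where the awkward-looking function $f(G;a)$ acquires a transparent meaning. Writing $\mathcal{P}(G;a+y)=\sum_{j=0}^n c_j\,y^j$ and collecting powers of $a$ via the binomial theorem, one finds $c_0=\sum_{k=1}^n p(G;k)a^k=\mathcal{P}(G;a)$ and, for $1\le j\le n$, $c_j=\sum_{k=j}^n p(G;k)\binom{k}{j}a^{k-j}$ (these are just $\tfrac{1}{j!}\mathcal{P}^{(j)}(G;a)$). Thus the numerator of $f(G;a)$ is exactly $c_0$ and its denominator is exactly $\sum_{j=1}^n c_j$, so $f(G;a)=c_0/\sum_{j=1}^n c_j$. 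Since $a>0$ and every $p(G;k)\ge 0$ with $p(G;n)=1$, all $c_j\ge 0$, the denominator is at least $c_n=1>0$, and $c_0=\mathcal{P}(G;a)>0$ because $\mathcal{P}(G;x)$ is strictly increasing on $(0,\infty)$ with $\mathcal{P}(G;0)=0$ (Proposition \ref{thm_roots}); hence $f(G;a)$ is well-defined and positive.

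Next I would substitute the root. Since $a+bi$ is a root of $\mathcal{P}(G;x)$, setting $y=bi$ gives $\sum_{j=0}^n c_j (bi)^j=0$, so $c_0=-\sum_{j=1}^n c_j (bi)^j$. Taking moduli, using $c_0>0$, $c_j\ge 0$, and $|(bi)^j|=|b|^j$, the triangle inequality yields
\[
c_0 \;=\; \Bigl|\sum_{j=1}^n c_j (bi)^j\Bigr| \;\le\; \sum_{j=1}^n c_j\,|b|^j .
\]
This single inequality already forces $|b|>0$ (consistent with $G$ having no positive real roots), and it is the engine of the bound.

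Finally I would split on the size of $|b|$ to convert this into the two-part lower bound. If $|b|\ge 1$, then $|b|^j\le |b|^n$ for every $j\in\{1,\dots,n\}$, so $c_0\le |b|^n\sum_{j=1}^n c_j$, i.e.\ $|b|^n\ge f(G;a)$ and $|b|\ge f(G;a)^{1/n}$. If instead $|b|\le 1$, then $|b|^j\le |b|$ for every $j$, so $c_0\le |b|\sum_{j=1}^n c_j$, i.e.\ $|b|\ge f(G;a)$. Since $f(G;a)^{1/n}$ and $f(G;a)$ each dominate $\min\{f(G;a),f(G;a)^{1/n}\}$, in either case $|b|\ge \min\{f(G;a),f(G;a)^{1/n}\}$, as claimed.

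There is no serious obstacle; essentially all the content is in the first step, namely recognizing that the denominator of $f(G;a)$ is precisely the sum of the nonconstant Taylor coefficients of $\mathcal{P}$ at $a$, so the root equation reduces to a triangle-inequality estimate. The only points requiring care are confirming $c_0>0$ and $\sum_{j\ge 1}c_j>0$ so that $f(G;a)$ makes sense, and organizing the $|b|\ge 1$ versus $|b|\le 1$ dichotomy so the two estimates combine into a single $\min$. I note that the same conclusion could be extracted from Rouch\'e's theorem (Corollary \ref{cor_Rouche}) applied to the translated polynomial $Q(y)=\mathcal{P}(G;a+y)$ with $h(y)=c_0$ and $g(y)=\sum_{j\ge 1}c_j y^j$ on circles $|y|=r$; but the direct modulus estimate above is shorter and produces the explicit closed form immediately.
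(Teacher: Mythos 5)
Your proof is correct, and it shares the paper's key setup---expanding $\mathcal{P}(G;x)$ about $x=a$ and recognizing that $f(G;a)=c_0/\sum_{j=1}^n c_j$, where the $c_j$ are the coefficients of the shifted polynomial---but the core argument is genuinely more elementary than the paper's. The paper invokes Rouch\'e's theorem (its Corollary \ref{cor_Rouche}) with $h=c_0$ to conclude that the shifted polynomial has no roots in any disk of radius $r$ on whose boundary $c_0>\sum_{k=1}^n c_k r^k$, defines $r^*$ as the supremum of such radii so that $|b|\ge r^*$, and then proves $r^*\geq\min\{f(G;a),(f(G;a))^{1/n}\}$ via a three-case monotonicity analysis of $A(r)=c_0/\sum_{k=1}^n c_k r^k$, comparing the polynomial condition against the linear proxy $\sum_k c_k r$ when $A(1)<1$ and against the degree-$n$ proxy $\sum_k c_k r^n$ when $A(1)>1$. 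You instead evaluate the root equation directly at $y=bi$ and apply the triangle inequality to get $c_0\le\sum_{j=1}^n c_j|b|^j$, then split on $|b|\le 1$ versus $|b|\ge 1$; this replaces the complex-analytic machinery and the sup/monotonicity bookkeeping with a two-line estimate, and your dichotomy on $|b|$ plays exactly the role of the paper's linear and degree-$n$ comparison polynomials. What the paper's route buys is a slightly stronger geometric fact---$\mathcal{P}(G;x)$ has no roots whatsoever in the open disk of radius $r^*$ centered at $a$, not merely a lower bound on $|b|$ for the particular root $a+bi$---though that extra strength is not needed for the stated theorem. Your argument is shorter, self-contained, and equally rigorous.
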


\begin{proof}
Define the polynomial 
\[P(G;y)=\sum_{j=1}^n p(G;j)(a+y)^j,\]
which is obtained by shifting $\mathcal{P}(G;x)$ to the left by $a$.
If $\mathcal{P}(G;x)$ has a complex root with real part $a>0$, $P(G;y)$ has a purely imaginary root $bi$ (which is non-zero since the power domination polynomial has no real positive roots). Then,

\begin{equation*}
P(G;y)=\sum_{j=1}^n p(G;j)\sum_{k=0}^j \binom{j}{k}a^{j-k}y^k=\sum_{k=0}^n \sum_{j=k}^n p(G;j)\binom{j}{k}a^{j-k}y^k,
\end{equation*}
%&=&\sum_{j=1}^n \sum_{k=0}^j p(G;j)\binom{j}{k}a^{j-k}y^k\\
where the first equality follows from the binomial expansion of $(a+y)^j$ and the second equality follows from moving $p(G;j)$ into the sum, interchanging the order of summation, and from the fact that $p(G;0)=0$. Hence, for $0\leq k\leq n$, the coefficient of $y^k$ in $P(G;y)$ is 
\[c_k=\sum_{j=k}^n p(G;j)\binom{j}{k}a^{j-k}.\]

%Note that if $r>1$, since $c_k\geq 0$ for all $k$, it follows that $\sum_{k=1}^n c_k r<\sum_{k=1}^n c_k r^k\leq \sum_{k=1}^n c_k r^n$. 

\noindent By Corollary \ref{cor_Rouche} (with $f=P(G;y)$, $g=\sum_{k=1}^n c_ky^k$, and $h=c_0$), if $c_0 >\sum_{k=1}^n c_k y^k$
on $|y|=r$, then $P(G;y)$ has no roots inside a disk of radius less than $r$. Let $r^*=\sup\{r:c_0 >\sum_{k=1}^n c_k r^k\}$. Then, the root $bi$ of $P(G;y)$ satisfies $|b|\geq r^*$. We will now show that $r^*\geq \min\{f(G;a),(f(G;a))^\frac{1}{n}\}$.

Define $A(r):=\frac{c_0}{\sum_{k=1}^n c_k r^k}$ for $r>0$, and note that $r^*=\sup\{r:A(r)>1\}$. Note also that $A(r)$ is strictly monotonically decreasing as $r$ increases (since $r>0$). Hence, if $A(1)<1$, then $r^*<1$; if $A(1)>1$, then $r^*>1$; if $A(1)=1$ then $r^*=1$. Finally, note that $A(1)=f(G;a)$. We will now consider 3 cases. 

\noindent \emph{Case 1: $A(1)=1$}. Then $f(G;a)=(f(G;a))^\frac{1}{n}=1=r^*$.

\noindent\emph{Case 2: $A(1)<1$}. Since $r^*<1$, for any $r\leq r^*$, 
%$A(r)>1$ and $r\geq r^k\geq r^n$. If 
if $\frac{c_0}{\sum_{k=1}^nc_kr}>1$, then $\frac{c_0}{\sum_{k=1}^nc_kr^k}>1$. Thus, 
\begin{equation*}
f(G;a)=\frac{c_0}{\sum_{k=1}^nc_k}=\sup\{r:\frac{c_0}{\sum_{k=1}^nc_kr}>1\}\leq \sup\{r:\frac{c_0}{\sum_{k=1}^nc_kr^k}>1\}=r^*.
\end{equation*}
\noindent \emph{Case 3: $A(1)>1$}.  
If $r\geq 1$, then $\frac{c_0}{\sum_{k=1}^n c_kr^n}>1$ implies $\frac{c_0}{\sum_{k=1}^n c_kr^k}>1$. If $r<1$, since $A(1)>1$ and $A$ is strictly monotonically decreasing, $A(r)>1$. So, for $0<r<1$, $\frac{c_0}{\sum c_k r^n}>1$ implies $\frac{c_0}{\sum c_k r^k}>1$ because the latter is always true. Thus, for $r>0$,
\begin{equation*}
(f(G;a))^{\frac{1}{n}}=\left(\frac{c_0}{\sum_{k=1}^nc_k}\right)^{\frac{1}{n}}=\sup\{r:\frac{c_0}{\sum_{k=1}^nc_kr^n}>1\}\leq \sup\{r:\frac{c_0}{\sum_{k=1}^nc_kr^k}>1\}=r^*.\tag*{\qedhere}
\end{equation*}

\end{proof}

%Note that since imaginary roots of $P(G;y)$ require that both the real part and imaginary part of the polynomial are 0, If y is imaginary, this is equivalent to saying that the polynomial of odd degree terms and the polynomial of even degree terms both must have a root and hence we could apply Rouch\'{e}'s theorem to those polynomials to obtain a tighter bound in which the denominator only consists of alternate terms.

\noindent We now characterize complex power domination roots in an expression that is independent of the graph. 

\begin{corollary}
\label{cor_roots2}
Let $G$ be a connected graph. Let $a$ be a positive real number, and let 
\begin{equation*}
f(a)=\frac{\sum_{i=\lceil\frac{n}{3}\rceil}^n \binom{n-\lceil n/3\rceil}{i-\lceil n/3\rceil}a^i}{\sum_{i=1}^n\sum_{k=i}^n \binom{n}{k}\binom{k}{i}a^{k-i}}.
\end{equation*}
If $a+bi$ is a root of $\mathcal{P}(G;x)$, then
$|b|>\min\left\{f(a),(f(a))^n\right\}$.
\end{corollary}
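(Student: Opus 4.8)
The plan is to turn the graph-dependent estimate of Theorem \ref{thm_roots_bound} into the graph-independent one by replacing $f(G;a)$ with the smaller quantity $f(a)$. The entire argument rests on one inequality, $f(G;a)\geq f(a)$, valid for every connected graph $G$ of order $n\geq 3$; once this is established the conclusion should follow from the same monotonicity that drives Theorem \ref{thm_roots_bound}.

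First I would bound the numerator $\sum_{i=1}^n p(G;i)a^i$ of $f(G;a)$ from below. Since $G$ is connected with $n\geq 3$, Theorem \ref{thm_zhao} gives $\gamma_P(G)\leq n/3$, and because $\gamma_P(G)$ is an integer this forces $\gamma_P(G)\leq \lceil n/3\rceil$; padding a minimum power dominating set, $G$ has a power dominating set $D$ of size exactly $\lceil n/3\rceil$. By Observation \ref{obs_subset} every superset of $D$ is power dominating, so counting just these supersets gives $p(G;i)\geq \binom{n-\lceil n/3\rceil}{i-\lceil n/3\rceil}$ for every $i$. Dually, I would bound the denominator $\sum_{i=1}^n\sum_{k=i}^n p(G;k)\binom{k}{i}a^{k-i}$ from above by the trivial estimate $p(G;k)\leq \binom{n}{k}$. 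Combining the two yields $f(G;a)\geq f(a)$. It is worth recording that both graph-independent sums collapse via the binomial theorem and the identity $\binom{n}{k}\binom{k}{i}=\binom{n}{i}\binom{n-i}{k-i}$: the numerator of $f(a)$ equals $a^{\lceil n/3\rceil}(1+a)^{n-\lceil n/3\rceil}$ and its denominator equals $(2+a)^n-(1+a)^n$, which makes the subsequent estimates more transparent.

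With $f(G;a)\geq f(a)$ in hand I would feed this into Theorem \ref{thm_roots_bound}. The function $t\mapsto \min\{t,t^{1/n}\}$ is increasing on $(0,\infty)$, so the bound $|b|\geq \min\{f(G;a),(f(G;a))^{1/n}\}$ descends to the graph-independent quantity, and I would then recast the resulting threshold in the stated form by splitting into the regimes $f(a)<1$ and $f(a)\geq 1$, exactly mirroring the three cases in the proof of Theorem \ref{thm_roots_bound}. The strict inequality should be harvested here from the observation that the numerator and denominator estimates above cannot both be tight for a connected graph, so in fact $f(G;a)>f(a)$.

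The step I expect to be the main obstacle is reconciling the exponents: the natural monotonicity argument from Theorem \ref{thm_roots_bound} produces a $(f(a))^{1/n}$, whereas the target features $(f(a))^n$, and for $f(a)>1$ these point in opposite directions. I would therefore bypass the statement of Theorem \ref{thm_roots_bound} and re-run the Rouch\'e estimate of Corollary \ref{cor_Rouche} directly with the graph-independent coefficient bounds $c_0\geq a^{\lceil n/3\rceil}(1+a)^{n-\lceil n/3\rceil}$ and $\sum_{k\geq 1}c_k\leq (2+a)^n-(1+a)^n$, controlling $\sum_{k\geq 1}c_k r^k$ by $r\sum_{k\geq 1}c_k$ on $r\leq 1$ and by $r^n\sum_{k\geq 1}c_k$ on $r\geq 1$. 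Carefully tracking which regime governs the supremum $r^*$ of radii with $A(r)>1$, and pinning down whether this lands on the stated exponent or its reciprocal, is the delicate bookkeeping at the core of the proof.
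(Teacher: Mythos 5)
Your core argument is the same as the paper's: the paper's proof consists exactly of (a) dispatching $n\le 2$ (where the only power domination roots are $0$ and $-2$, so the claim is vacuous for $a>0$), (b) the two coefficient bounds you give --- $p(G;i)\ge\binom{n-\lceil n/3\rceil}{i-\lceil n/3\rceil}$ via Theorem \ref{thm_zhao} plus Observation \ref{obs_subset}, and $p(G;k)\le\binom{n}{k}$ --- yielding $f(a)\le f(G;a)$, and (c) the words ``the result follows'' from Theorem \ref{thm_roots_bound}. Your proposal reproduces (b) correctly (your closed forms $a^{\lceil n/3\rceil}(1+a)^{n-\lceil n/3\rceil}$ and $(2+a)^n-(1+a)^n$ check out, and your strictness observation that the two bounds cannot both be tight is a legitimate addition the paper omits); you only skip the trivial case (a).

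However, the ``main obstacle'' you flag at the end is genuine, and it is a gap in the paper's own proof as much as in yours. Monotonicity of $t\mapsto\min\{t,t^{1/n}\}$ applied to $f(a)\le f(G;a)$, or equivalently your direct re-run of the Rouch\'e estimate of Corollary \ref{cor_Rouche} with $\sum_{k\ge1} c_k r^k\le r\sum_{k\ge1}c_k$ for $r\le 1$ and $\le r^n\sum_{k\ge1}c_k$ for $r\ge 1$, produces $|b|\ge\min\{f(a),(f(a))^{1/n}\}$ --- the reciprocal exponent. When $f(a)\le 1$ this does imply the stated bound, since then $(f(a))^n\le f(a)=\min\{f(a),(f(a))^{1/n}\}$, and your strict inequality $f(G;a)>f(a)$ upgrades $\ge$ to $>$. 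But when $f(a)>1$ the stated bound is $\min\{f(a),(f(a))^n\}=f(a)$, which is strictly larger than $(f(a))^{1/n}$, and nothing in either argument delivers it: $f(G;a)\ge f(a)$ does not give $(f(G;a))^{1/n}\ge f(a)$. This regime is not formally empty, since $f(a)\sim a/n$ as $a\to\infty$, so $f(a)>1$ for large $a$; closing the gap would require showing that no power domination root of a connected graph has such a large real part (or restricting the claim to $f(a)\le 1$, which is evidently what the authors had in mind). So: essentially the paper's approach, with the honest difference that you noticed the exponent mismatch the paper silently steps over; your proposal, like the paper's, proves the claim only in the regime $f(a)\le 1$.
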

\begin{proof}
If $G$ has fewer than 3 vertices, $\mathcal{P}(G;x)$ could only have the real roots 0 and $-2$. Thus, $n\geq 3$, so by Theorem \ref{thm_zhao}, $\gamma_P(G)\leq \frac{n}{3}$.
Moreover any superset of a power dominating set is also power dominating, and $p(G;k)\leq \binom{n}{k}$. Thus, $f(a)\leq f(G;a)$ from Theorem \ref{thm_roots_bound}, and the result follows.
\end{proof}

\section{Conclusion}
\label{section_conclusion}

In this paper, we introduced the power domination polynomial of a graph in order to study the enumeration problem associated with power domination. We explored various structural properties of $\mathcal{P}(G;x)$, related it to other graph polynomials, characterized $\mathcal{P}(G;x)$ for specific families of graphs, and analyzed some properties of power domination roots. We now offer several open questions about the power domination polynomial.

\begin{question}
\label{q1}
For any graph $G$, is $\mathcal{P}(G;x)$ unimodal?
\end{question}

\noindent There is some evidence the answer to Question \ref{q1} is affirmative, e.g. as seen in Proposition \ref{prop_unimodal}. We have also computationally verified that the power domination polynomials of all graphs on fewer than $10$ vertices are unimodal; computer code can be found at \url{https://github.com/rsp7/Power-Domination-Polynomial}.

Another direction for future work is to derive conditions which guarantee that a polynomial $P$ is or is not the power domination polynomial of some graph. For instance, Corollary \ref{cor_roots2} gave a necessary condition for this using the complex roots of the polynomial. In addition, it would be interesting to find other families of graphs which are uniquely identified by their power domination polynomials. The following question related to $\mathcal{P}$-unique graphs (and inspired by Theorem \ref{thm_isolate_unique}) could also be investigated.

\begin{question}
Given two $\mathcal{P}$-unique graphs $G_1$ and $G_2$, when is $G_1\dot\cup G_2$ $\mathcal{P}$-unique? What other graph operations preserve $\mathcal{P}$-uniqueness?
\end{question}

\noindent It would also be interesting to characterize or count all power dominating sets (or at least all minimum power dominating sets) of some other nontrivial families of graphs such as trees and grids. In Proposition \ref{prop_closed_form1}, the power domination polynomial of the corona of any graph with $K_k$, $k>1$ was computed explicitly. However, the case $k=1$ appears to be more difficult as Theorem \ref{theorem_corona} does not apply directly; this motivates the following question:

\begin{question}
For any graph $H$, is there an efficient way to compute $\mathcal{P}(H\circ K_1;x)$?
\end{question}

\noindent A graph polynomial $f(G;x)$ satisfies a \emph{linear recurrence relation} if $f(G;x)=\sum_{i=1}^k g_i(x)f(G_i;x)$, where the $G_i$'s are obtained from $G$ using vertex or edge elimination operations, and the $g_i$'s are fixed rational functions. For example, the chromatic polynomial $P(G;x)$ satisfies the deletion-contraction recurrence $P(G;x)=P(G-e;x)-P(G/e;x)$. Similarly, a \emph{splitting formula} for a graph polynomial $f(G;x)$ is an expression for $f(G;x)$ in terms of the polynomials of certain subgraphs of $G$; several such formulas were derived in Section \ref{sect_decomp}. In view of this, it would be interesting to investigate the following question:

\begin{question}
Are there linear recurrence relations for $\mathcal{P}(G;x)$, or splitting formulas for $\mathcal{P}(G;x)$ based on cut vertices or separating sets?
\end{question}
\noindent Answering these questions would be useful for computational approaches to the problem; in particular, a linear recurrence relation would allow the power domination polynomial of a graph to be computed recursively.

\bibliographystyle{abbrv}

\end{document}